\numberwithin{equation}{section}
\theoremstyle{plain}
\newtheorem{thm}{Theorem}[section]
\newtheorem{lem}[thm]{Lemma}
\newtheorem{prop}[thm]{Proposition}
\newtheorem{proposition}[thm]{Proposition}
\theoremstyle{definition}
\newtheorem{remark}[thm]{Remark}
\numberwithin{equation}{section}
\def\beqn{\begin{equation}}
\def\beqn*{$$}
\def\eeqn{\end{equation}}
\def\eeqn*{$$}
\def\nn{\nonumber}
\newcommand{\BX}{{\bf X}}
\newcommand{\BY}{{\bf Y}}
\newcommand{\bx}{{\bf x}}
\newcommand{\bi}{{\bf i}}
\newcommand{\id}{infinitely divisible}
\newcommand{\reals}{{\mathbb R}}
\newcommand{\bbr}{\reals}
\newcommand{\bbn}{{\mathbb N}}
\newcommand{\vep}{\varepsilon}
\newcommand{\bbz}{\protect{\mathbb Z}}
\newcommand{\SaS}{S$\alpha$S}
\newcommand{\eid}{\stackrel{d}{=}}
\newcommand{\one}{{\bf 1}}
\newcommand{\rhoinv}{\rho^{\leftarrow}}
\def\sas{{S$\alpha$S}}
\def\sgs{{S$\gamma$S}}
\def\bi{\begin{itemize}}
\def\ei{\end{itemize}}
\def\bb{\beta}
\def\eps{\epsilon}
\def\be{\begin{equation}}
\def\ee{\end{equation}}
\def\bea{\begin{eqnarray}}
\def\eea{\end{eqnarray}}
\def\nn{\nonumber}
\def\ff{\infty}
\def\({\left(}
\def\){\right)}
\def\[{\left[}
\def\]{\right]}
\def\lk{\left\{}
\def\rk{\right\}}
\def\lb{\left|}
\def\rb{\right|}
\def\Cov{\text{Cov}}
\def\X{\mathbb{X}}
\def\Y{\mathbb{Y}}
\def\N{\mathbb{N}}
\def\R{\mathbb{R}}
\def \P{\mathbf{P}}
\def \Q{\mathbf{Q}}
\def \E{\mathbf{E}}
\def \Y{Y_{\alpha,\bb,\gamma}}
\def \M{M_{\beta}}
\def \MM{M_{\beta}((t-x)_{+}}
\def \srlim{\stackrel{n\to\infty}}
\def \aaa{\mathfrak	{a}}
\def \CC{{\mathcal C}}
\def \DD{{\mathcal D}}
\def \EE{{\mathcal E}}
\def \FF{{\mathcal F}}
\def \TT{{\mathcal T}}
\def \XX{{\mathcal X}}
\begin{document}

\title[Functional Central Limit Theorem]{Functional central limit
  theorem for negatively dependent heavy-tailed stationary infinitely
  divisible processes generated by conservative flows}
\author{Paul Jung}
\address{Department of Mathematics \\
University of Alabama \\
Birmingham, AL 35294}
\email{pjung@uab.edu}

\author{Takashi Owada}
\address{Faculty of Electrical Engineering\\
Technion - Israel Institute of Technology  \\
Haifa, 32000, Israel}
\email{takashiowada@ee.technion.ac.il}

\author{Gennady Samorodnitsky}
\address{School of Operations Research and Information Engineering\\
and Department of Statistical Science \\
Cornell University \\
Ithaca, NY 14853}
\email{gs18@cornell.edu}

\thanks{Jung's research was partially supported by NSA grant H98230-14-1-0144. Owada's research was partially supported by URSAT, ERC Advanced Grant 320422. Samorodnitsky's research was partially supported by the ARO
grant  W911NF-12-10385 at Cornell University.}

\subjclass{Primary 60F17, 60G18. Secondary 37A40, 60G52 }
\keywords{infinitely divisible process, conservative flow, Harris
  recurrent Markov chain,  functional central
  limit theorem, self-similar process, pointwise dual ergodicity,
  Darling-Kac theorem, fractional stable motion
\vspace{.5ex}}

\begin{abstract}
We prove a functional central limit theorem for
partial sums of symmetric stationary long range dependent heavy tailed
infinitely divisible  processes with a certain type of negative
dependence. Previously only positive dependence could be treated. The
negative dependence involves cancellations of the Gaussian second
order. This 
leads to new types of  {limiting} processes involving stable random
measures, due to heavy tails, Mittag-Leffler processes, due to long
memory, and Brownian motions, due to the Gaussian second order
cancellations.  
\end{abstract} 
 
\maketitle

\section{Introduction} \label{sec:intro}
 
Let $\BX=(X_1,X_2,\ldots)$ be a discrete time stationary stochastic
process; depending on notational convenience we will sometimes allow
the time index to extend to the entire $\bbz$. 
Assume that $\BX$ is symmetric (i.e. that $\BX\eid -\BX$) and
that the marginal law of $X_1$ is in the domain of attraction of an
$\alpha$-stable law, $0<\alpha<2$. That is, 
\begin{equation} \label{e:marginal.tail}
P\bigl( |X_1|>\cdot\bigr) \in RV_{-\alpha} \ \ \text{at infinity;}
\end{equation}
see \cite{feller:1971} or \cite{resnick:1987}. Here and elsewhere in
this paper we use the notation $RV_p$ for the set of functions of
regular variation with exponent $p\in\bbr$. If the process satisfies a
functional central limit theorem, then a statement of the type
\begin{equation} \label{e:FCLT} 
\left( \frac{1}{c_n}\sum_{k=1}^{\lfloor nt\rfloor} X_k,  \, 0\leq
  t\leq 1\right) \Rightarrow \Bigl( Y(t), \, 0\leq   t\leq 1\Bigr)
\end{equation}
holds, with $(c_n)$  a positive sequence growing to infinity, and
$\BY= \Bigl( Y(t), \, 0\leq   t\leq  1\Bigr)$  a
non-degenerate (non-deterministic) process. The convergence is either
weak convergence in the appropriate topology on $D[0,1]$ or just
convergence in finite dimensional distributions. The heavy tails in
\eqref{e:marginal.tail} will necessarily affect the order of magnitude
of the normalizing sequence $(c_n)$ and the nature of the limiting
process $\BY$. The latter process is, under mild assumptions,
self-similar, with stationary increments; see \cite{lamperti:1962} and
\cite{embrechts:maejima:2002}. If the process $\BX$ is long range
dependent, then both the sequence $(c_n)$ and the  limiting
process $\BY$ may be affected by the length of the memory as well. 

A new class of central limit theorems for long range dependent
stationary processes with heavy tails was introduced in
\cite{owada:samorodnitsky:2015}. In that paper the process $\BX$
was a stationary \id\ process given in the form 
\begin{equation} \label{e:the.process}
X_n = \int_E  f\circ T^n(s) \, dM(s), \quad n=1,2,\dots\,,
\end{equation}
where $M$ is a symmetric homogeneous infinitely divisible random measure on a
measurable space $(E,\mathcal{E})$,  without a Gaussian component,
with control measure $\mu$, 
$f:E \to \mathbb{R}$ is a measurable function, and $T:E \to E$ a
measurable map, preserving the measure $\mu$; precise definitions of
these and following notions are below. The regularly varying tails, in
the sense of \eqref{e:marginal.tail}, of the process
$\BX$ are due to the random measure $M$, 
while the long memory is due
to the ergodic-theoretical properties of the map $T$, assumed to be
conservative and ergodic. In the model considered in
\cite{owada:samorodnitsky:2015} the length of the memory could be
quantified by a single parameter  {$0\le\beta\le 1$ (the larger $\beta$ is},
the longer the memory). Under the crucial assumption that
\begin{equation} \label{e:nonzero.muf}
\mu(f):= \int_E f(s)\, \mu(ds) \not= 0
\end{equation}
(with the integral being well defined), it turns out that the
normalizing sequence  $(c_n)$ is regularly varying with exponent
$H=\beta+(1-\beta)/\alpha$, and the limiting process $\BY$ is, up to a
multiplicative factor of $\mu(f)$, the $\beta$-Mittag-Leffler
fractional symmetric $\alpha$-stable (\SaS) motion defined by 
\begin{equation} \label{eq:MLSM} 
Y_{\alpha,\beta}(t) = \int_{\Omega^{\prime} \times [0,\infty)}
M_{\beta}\bigl((t-s)_+,\omega^{\prime}\bigr) d
Z_{\alpha,\beta}(\omega^{\prime},s), \quad t \geq 0, 
\end{equation}
where $Z_{\alpha,\beta}$ is a \SaS\ random measure on $\Omega^{\prime}
\times [0,\infty)$ with control measure $\P^{\prime} \times \nu_\beta$. Here 
$\nu_\beta$  {is} a measure on $[0,\infty)$ given by 
$\nu_\beta(dx) = (1-\beta) x^{-\beta} \, dx, \, x>0$, and $M_\beta$ is a
Mittag-Leffler process defined on a probability space 
$(\Omega^{\prime},\mathcal{F}^{\prime},\P^{\prime})$ (all the notions
will be defined momentarily). The random measure
$Z_{\alpha,\beta}$ and the process
$Y_{\alpha,\beta}$, are defined on some  probability space 
$(\Omega,\mathcal{F} ,\P )$.

The $\beta$-Mittag-Leffler fractional  \SaS\ motion is a self-similar
process with Hurst exponent $H$ as above. Note that
$$
H\in \left\{ \begin{array}{ll} 
(1, {1/\alpha]} & \text{if $0<\alpha<1$,} \\
\{ 1\} & \text{if $\alpha=1$,} \\
(1/\alpha,1) & \text{if $1<\alpha<2$,}
\end{array}
\right.
$$
which is the top part of the feasible region 
$$
H\in \left\{ \begin{array}{ll} 
(0,1/\alpha] & \text{if $0<\alpha<1$,} \\
(0,1] & \text{if $\alpha=1$,} \\
(0,1) & \text{if $1<\alpha<2$}
\end{array}
\right.
$$
for the Hurst exponent of a self-similar \SaS\ process with stationary
increments; see \cite{samorodnitsky:taqqu:1994}. This is usually
associated with positive dependence both in the increments of the
process $\BY$ itself and the original process $\BX$ in the functional
central limit theorem \eqref{e:FCLT}; the best-known example is that
of the Fractional Brownian motion, the Gaussian self-similar  process
with stationary increments. For the latter process the range of $H$ is the
interval $(0,1)$, and positive dependence corresponds to the range
$H\in (1/2,1)$. 

In the Gaussian case of the Fractional Brownian motion, negative
dependence ($0<H<1/2$) is often related to ``cancellations'' between
the observations; the statement
$$
\sum_{n=-\infty}^\infty \Cov(X_0,X_n)=0
$$
is trivially true if the process $\BX$ is the increment process of the
Fractional Brownian motion with $H<1/2$, and the same is true in most
of the situations in \eqref{e:FCLT}, when the limit process is  the
Fractional Brownian motion with $H<1/2$. 

In the infinite variance case considered in
\cite{owada:samorodnitsky:2015}, ``cancellations'' appear when the
integral $\mu(f)$ in \eqref{e:nonzero.muf} vanishes. It is the purpose
of the present paper to take a first step towards understanding this case,
when the long memory due to the map $T$ interacts with the negative
dependence due to the cancellations. We use the cautious formulation
above because with the integral $\mu(f)$ vanishing, the second order
behaviour of $f$ becomes crucial, and in this paper we only consider a
Gaussian type of  {second} order behaviour. Furthermore, even in this
case our assumptions on the space $E$ and map $T$ in
\eqref{e:the.process} are more restrictive than those in
\cite{owada:samorodnitsky:2015}. Nonetheless, we still obtain an
entirely new class of functional limit theorems and limiting
fractional \SaS\ motions. 

This paper is organized as follows. In Section \ref{sec:setup} we
provide the necessary background on infinitely divisible and stable
processes and integrals, and related notions, used in this paper. In
Section \ref{sec:limits} we describe a new class of self-similar \SaS\
processes with stationary   increments, some of which will appear as
limits in the functional central limit theorem proved later. Certain
facts on general state space Markov chains, needed to define and treat the
model considered in the paper, are in Section \ref{sec:markov}. The
main result of the paper is stated and proved in Section
\ref{sec:FCLT}. Finally, Section \ref{sec:moments} is an appendix
containing bounds on fractional moments of \id\ random variables
needed elsewhere in the  {paper.} 

We will use several common abbreviations  {throughout} the paper: {\it ss}
for ``self-similar'', {\it sssi} for ``self-similar, with stationary
increments'', and {\it \SaS\ } for ``symmetric $\alpha$-stable''. 

\section{Background} \label{sec:setup} 

In this paper we will work with symmetric infinitely divisible processes defined
as integrals of deterministic functions with respect to homogeneous
symmetric \id\ random
measures, the symmetric stable processes and measures forming a special case. 
Let $(E,\mathcal{E})$ be a measurable space. Let $\mu$ be  a
$\sigma$-finite measure on $E$, it will be assumed to be infinite in
 {most} of the paper, but at the moment it is not important. Let $\rho$
be a one-dimensional symmetric L\'evy measure, i.e. a $\sigma$-finite
measure on $\bbr\setminus \{0\}$ such that 
$$
\int_\bbr \min(1,x^2)\, \rho(dx)<\infty\,.
$$
If $\mathcal{E}_0=\bigl\{ A\in \EE:\, \mu(A)<\infty\}$, then a homogeneous
symmetric \id\ random measure $M$ on $(E,\mathcal{E})$ with control
measure $\mu$ and local  L\'evy measure $\rho$ is a stochastic process 
$\bigl( M(A), \, A\in\EE_0\bigr)$ such that 
\begin{equation*}  
 {\E} e^{iu M(A)} = \exp\left\{ -\mu(A) \int_{\mathbb{R}}
  \bigl(1-\cos(ux)\bigr)\, \rho(dx) \right\}\,  \quad u \in \bbr
\end{equation*}
for every $A \in\EE_0$. The random measure $M$ is independently
scattered and $\sigma$-additive in the usual sense of random measures;
see \cite{rajput:rosinski:1989}. The random measure is symmetric
$\alpha$-stable (\SaS), $0<\alpha<2$, if 
$$
\rho(dx) = \alpha |x|^{-(\alpha+1)}\, dx\,.
$$

If $M$ has a control measure $\mu$ and a local  L\'evy measure $\rho$,
and $g:\, E\to\bbr$ is a measurable function such that 
\begin{equation} \label{e:integrability}
\int_E \int_\bbr \min\bigl( 1, x^2g(s)^2\bigr)\, \rho(dx)\,
\mu(ds)<\infty\,,
\end{equation}
then the integral $\int_E g\, dM$ is well defined and is a symmetric
\id\ random variable. In the $\alpha$-stable case the integral is a
\SaS\ random variable and the integrability condition \eqref{e:integrability}
reduces to the $L^\alpha$ condition
\begin{equation} \label{e:integrability.stable}
\int_E |g(s)|^\alpha\, \mu(ds)<\infty\,.
\end{equation}
We remark that in the $\alpha$-stable case it is common to use the
$\alpha$-stable version of the control measure; it is just a scaled
version $C_\alpha\, \mu$ of the control measure $\mu$, with $C_\alpha$
being the $\alpha$-stable  tail constant given by 
$$
C_{\alpha} = \left( \int_0^{\infty} x^{-\alpha} \sin x \,  dx
\right)^{-1} = \begin{cases} (1-\alpha) / \bigl(\Gamma(2-\alpha) \cos(\pi
  \alpha / 2)\bigr) & \ \ \text{if } \alpha \neq 1, \\ 
2 / \pi  & \ \ \text{if } \alpha=1\,.
\end{cases}
$$
See  \cite{rajput:rosinski:1989}  for this and the subsequent
properties of \id\ processes and integrals. 

We will consider symmetric \id\ stochastic processes (without a
Gaussian component) $\BX$ given in the form
$$
X(t) = \int_E g(t, s)\, M(ds)\,, \ t\in \TT\,,
$$
where $\TT$ is a parameter space, and $g(t,\cdot)$ is, for each $t\in
\TT$, a measurable function satisfying \eqref{e:integrability}. The
(function level) L\'evy measure of the process $\BX$ is given by 
\begin{equation} \label{e:measure.process}
\kappa_\BX = (\rho \times \mu) \circ K^{-1}\,,
\end{equation}
with $K:\, \bbr \times E \to \bbr^\TT$ given by $K(x,s) = x\bigl(
g(t,s), \, t\in \TT\bigr)$, $s\in E,\, x\in \bbr$. 

An important special case  {for us} is that of  $\TT= \bbn$ and 
\begin{equation} \label{e:shifted.f}
g(n,s) = f\circ T^n(s), \ n=1,2,\dots\,,
\end{equation} 
where $f:\, E\to\bbr$ is a measurable function satisfying
\eqref{e:integrability}, and $T:E \to E$ a
measurable map, preserving the control measure $\mu$. In this  {case} we
obtain the process exhibited in \eqref{e:the.process}. It is
elementary to check that in this case the L\'evy measure $\kappa_\BX$
in \eqref{e:measure.process} is invariant under the left shift
$\theta$ on $\bbr^\bbn$, 
$$
\theta (x_1,x_2,x_3,\ldots) = (x_2,x_3,\ldots)\,.
$$
In particular, the process $\BX$ is, automatically, stationary.  There
is a close relation between certain ergodic-theoretical properties of
the shift operator $\theta$ with respect to the L\'evy measure
$\kappa_\BX$ (or of the map $T$ with respect to the control measure
$\mu$) and certain distributional properties of the stationary process
$\BX$; we will discuss these below.

Switching gears a bit, we now recall a crucial notion needed for the
main result of this paper as well as for the presentation of the new
class of fractional \SaS\ noises in the next section. 
For $0<\beta<1$, let $\( S_{\beta}(t)\)$ be a
$\beta$-stable subordinator, a L\'evy process with increasing
sample paths, satisfying $\E e^{-\theta S_{\beta}(t)} = \exp \{ -t
\theta^{\beta} \}$ for $\theta\geq 0$ and $t\geq 0$.
The {\it Mittag-Leffler process} is its inverse process given by
\begin{equation} \label{MLprocess}
M_{\beta}(t) := S_{\beta}^{\leftarrow}(t) =
 \inf \bigl\{u\geq 0:\  S_{\beta}(u) \geq t  \bigr\}, \
t\geq 0\,.
\end{equation}
It is a continuous process with nondecreasing sample paths. Its 
 marginal distributions are the Mittag-Leffler distributions, whose 
Laplace transform is finite for all real values of the argument and is
given by 
\begin{equation} \label{MT.transform}
\E  \exp \{ \theta M_{\beta}(t) \} = \sum_{n=0}^{\infty} \frac{
  (\theta t^{\beta})^n}{\Gamma(1+n \beta)}, \quad \theta \in
\mathbb{R}; 
\end{equation}
see Proposition 1(a) in \cite{bingham:1971}. Using
\eqref{MT.transform},  the definition of the Mittag-Leffler
process can be naturally extended to the boundary cases $\beta=0$ and
$\beta=1$. We do this by setting $M_0(0)=0$ and $M_0(t)= {E_{\rm st}}$, $t>0$, with $ {E_{\rm st}}$ a
standard exponential random variable, and $M_1(t)=t$, $t\geq 0$. 

The Mittag-Leffler process is self-similar with exponent $\beta$. 
It has neither stationary  nor independent increments (apart from the
degenerate case $\beta=1$). 

\section{A new class of self-similar \SaS\ processes with stationary
  increments} \label{sec:limits}

In this section we introduce a new class of self-similar \SaS\ processes with stationary
  increments. A subclass of these processes will appear as a weak
  limit in the functional central limit theorem in Section \ref{sec:FCLT}, but
  the entire class has intrinsic interest. Furthermore, we  {anticipate}
  that other members of the class will appear in other limit
  theorems. The processes in this class are defined, up to a scale
  factor, by 3 parameters, $\alpha, \beta$ and $\gamma$:
\begin{equation*} \label{e:parameters}
0<\alpha<\gamma\leq 2, \ 0\leq \beta\leq 1\,.
\end{equation*}

We proceed with a setup similar to the one in \eqref{eq:MLSM}. Define
a $\sigma$-finite measure on $[0,\infty)$ by 
\begin{equation*} \label{e:nu.beta}
\nu_\beta(dx) = \left\{ \begin{array}{ll}
(1-\beta)x^{-\beta}\, dx, & 0\leq \beta<1,\\
\delta_0 (dx), & \beta=1
\end{array}
\right.
\end{equation*}
($\delta_0$ being the point mass at zero). Let $(\Omega',\FF',\P')$ be
a probability space, and let $\(S_\gamma(t,\omega')\)$ be a \sgs\ L\'evy motion and
$\(M_\beta(t,\omega')\)$ be an independent $\beta$-Mittag-Leffler
process, both defined on $(\Omega',\FF',\P')$. We define
\begin{equation}\label{def:Y process}
\Y(t):=\int_{\Omega'\times[0,\infty)} S_\gamma(\MM,\omega'),\omega')\, 
dZ_{\alpha, \beta}(\omega',x), \ t\ge 0\,,
\end{equation}
where $Z_{\alpha,\beta}$ is a \sas\ random measure on $\Omega'\times[0,\infty)$
with control measure $\P'\times\nu_\beta$; we use the $\alpha$-stable
version of the control measure  {(see the remark following \eqref{e:integrability.stable})}. 

\begin{remark} \label{rk:boundary}
The boundary cases $\beta=0$ and $\beta=1$ are somewhat special. In
the case $\beta=0$ we interpret the process in \eqref{def:Y process}  as
$$
Y_{\alpha,0,\gamma}(t)=\int_{\Omega'\times[0,\infty)}
S_\gamma( {E_{\rm st}}(\omega'),\omega') \one(x<t)\, 
dZ_{\alpha,0}(\omega',x), \ t\ge 0\,,
$$
where  {$E_{\rm st}$} is a standard exponential random variable defined on
$(\Omega',\FF',\P')$, independent of $\(S_\gamma(t,\omega')\)$, while 
$Z_{\alpha,0}$ is a \sas\ random measure on $\Omega'\times[0,\infty)$
with control measure $\P'\times {\rm Leb}$. It is elementary to see
that this process is a \sas\ L\'evy motion itself, and the dependence on
$\gamma$ is only through a multiplicative constant, equal to
$$
\bigl( \E^\prime |S_\gamma(1)|^\alpha \E^\prime
( {E^{\alpha/\gamma}_{\rm st}})\bigr)^{1/\alpha}\,.
$$

In the second boundary case $\beta=1$  the variable $x$ in the
integral becomes redundant, and we interpret  the process in
\eqref{def:Y process}  as 
$$
Y_{\alpha,1,\gamma}(t)=\int_{\Omega'}
S_\gamma(t,\omega') \, 
dZ_{\alpha,1}(\omega'), \ t\ge 0\,,
$$
where $Z_{\alpha,1}$ is a \sas\ random measure on $\Omega'$
with control measure $\P'$. This process is, distributionally, 
sub-stable, with an alternative representation   
$$
Y_{\alpha,1,\gamma}(t)=W^{1/\gamma} S_\gamma(t), \  t\geq 0\,,
$$
with $W$ a positive strictly $\alpha/\gamma$-stable random variable
independent of the \sgs\ L\'evy motion $(S_\gamma)$, both of which are
now defined on 
$(\Omega,\FF,\P)$; see Section 3.8 in \cite{samorodnitsky:taqqu:1994}.  
\end{remark}

\begin{prop}\label{pr:new.pr}
 $(\Y(t))$ is a well defined $H$-sssi \sas\ process with
\begin{equation}\label{eq:H}
H=\frac \beta \gamma+ \frac {1-\beta}{\alpha}.
\end{equation}
\end{prop}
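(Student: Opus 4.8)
The plan is to verify three things in turn: that the integral in \eqref{def:Y process} defines an $L^\alpha$ kernel (well-definedness), that the resulting \sas\ process is self-similar of index $H$, and that it has stationary increments. Throughout I will use that for a \sas\ random measure $Z_{\alpha,\beta}$ with control measure $m=\P'\times\nu_\beta$ in its $\alpha$-stable version, the process $\Y(t)=\int g(t,\cdot)\,dZ_{\alpha,\beta}$ with $g(t,\omega',x)=S_\gamma\bigl(M_\beta((t-x)_+,\omega'),\omega'\bigr)$ has joint characteristic function
\begin{equation*}
\E\exp\Bigl\{i\sum_{j=1}^d\theta_j\Y(t_j)\Bigr\}=\exp\Bigl\{-\int\Bigl|\sum_{j=1}^d\theta_j\,g(t_j,s)\Bigr|^\alpha\,m(ds)\Bigr\},
\end{equation*}
so that every distributional claim reduces to an identity between kernel integrals $\int|\sum_j\theta_j g(t_j,\cdot)|^\alpha\,dm$. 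The three scaling ingredients I will exploit are the $(1-\beta)$-homogeneity $\nu_\beta(c\,\cdot)=c^{1-\beta}\nu_\beta(\cdot)$, the $\beta$-self-similarity of $M_\beta$, and the $1/\gamma$-self-similarity of $S_\gamma$.

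For well-definedness it suffices, by \eqref{e:integrability.stable}, to show $\int|g(t,\cdot)|^\alpha\,dm<\infty$ for each fixed $t$. Writing $\Phi(s):=S_\gamma(M_\beta(s))$, I condition on $M_\beta$ and use independence together with the scaling $\E'|S_\gamma(v)|^\alpha=v^{\alpha/\gamma}\,\E'|S_\gamma(1)|^\alpha$ to get $\E'|\Phi(s)|^\alpha=\E'\bigl[M_\beta(s)^{\alpha/\gamma}\bigr]\,\E'|S_\gamma(1)|^\alpha$. Here $\E'|S_\gamma(1)|^\alpha<\infty$ precisely because $\alpha<\gamma$ --- this is where that hypothesis enters --- and all moments of $M_\beta(s)$ are finite by \eqref{MT.transform}. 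Since $g(t,\omega',x)=\Phi((t-x)_+)$ vanishes for $x>t$ and $\nu_\beta([0,t])=t^{1-\beta}<\infty$, Tonelli gives $\int|g(t,\cdot)|^\alpha\,dm\le \E'[M_\beta(t)^{\alpha/\gamma}]\,\E'|S_\gamma(1)|^\alpha\,t^{1-\beta}<\infty$; the boundary cases $\beta=0,1$ are read off from Remark \ref{rk:boundary}.

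For self-similarity, fix $c>0$ and substitute $x=cy$ in the kernel integral for $(\Y(ct_j))_j$. Then $(ct_j-x)_+=c(t_j-y)_+$, the homogeneity of $\nu_\beta$ produces a factor $c^{1-\beta}$, and the composed self-similarity $\bigl(S_\gamma(M_\beta(cs))\bigr)_{s}\eid\bigl(c^{\beta/\gamma}S_\gamma(M_\beta(s))\bigr)_{s}$ (obtained by first applying $\beta$-self-similarity of $M_\beta$, then $1/\gamma$-self-similarity of $S_\gamma$, using independence at each step) lets me replace $\Phi(c(t_j-y)_+)$ by $c^{\beta/\gamma}\Phi((t_j-y)_+)$ inside the $\P'$-expectation, producing a factor $c^{\alpha\beta/\gamma}$. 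Collecting exponents gives $\int|\sum_j\theta_j g(ct_j,\cdot)|^\alpha\,dm=c^{\alpha\beta/\gamma+1-\beta}\int|\sum_j\theta_j g(t_j,\cdot)|^\alpha\,dm$, and since $\alpha\beta/\gamma+(1-\beta)=\alpha H$ with $H$ as in \eqref{eq:H}, this is exactly the self-similarity relation $(\Y(ct))\eid(c^H\Y(t))$.

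Stationary increments is the main obstacle. Fixing a shift $u\ge0$, I split the increment kernel $g(t_j+u,\omega',x)-g(u,\omega',x)=\Phi((t_j+u-x)_+)-\Phi((u-x)_+)$ according to whether $x\ge u$ or $x<u$. For $x\ge u$ the second term vanishes and the first equals $g(t_j,\omega',x-u)$, a translated copy, but integrated against the shifted weight $(u+y)^{-\beta}$ rather than $y^{-\beta}$. For $x<u$, with $r=u-x$, the increment is the genuine increment $\Phi(t_j+r)-\Phi(r)$ over $[r,t_j+r]$, integrated against $(u-r)^{-\beta}\,dr$. The key structural observation is that, conditionally on $M_\beta$, the stationary independent increments of $S_\gamma$ (which is independent of $M_\beta$) give $\bigl(\Phi(t_j+r)-\Phi(r)\bigr)_j\eid\bigl(\widetilde S_\gamma(M_\beta(t_j+r)-M_\beta(r))\bigr)_j$ with $\widetilde S_\gamma\eid S_\gamma$ a fresh motion independent of $M_\beta$; thus, exactly as for the Mittag-Leffler fractional \sas\ motion of \cite{owada:samorodnitsky:2015}, the $S_\gamma$ factor can be pulled out on both sides and the problem collapses to a distributional identity for the increment process $N_r(\cdot)=M_\beta(r+\cdot)-M_\beta(r)$ of the Mittag-Leffler process: it remains to prove that the $(u-r)^{-\beta}\,dr$-mixture over $r\in(0,u)$ of the laws of $(N_r(t_j))_j$ coincides with the $[y^{-\beta}-(u+y)^{-\beta}]\,dy$-mixture over $y>0$ of the laws of $(M_\beta((t_j-y)_+))_j$. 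I expect this to be the technical heart of the proof: it is where the precise choice $\nu_\beta(dx)\propto x^{-\beta}dx$ is forced, and it should follow from the regenerative (renewal) structure of the range of the $\beta$-stable subordinator $S_\beta$, whose inverse is $M_\beta$. As a consistency check, both mixing measures, $(u-r)^{-\beta}dr$ on $(0,u)$ and $[y^{-\beta}-(u+y)^{-\beta}]dy$ on $(0,\infty)$, integrate to $u^{1-\beta}/(1-\beta)$, a necessary condition for the identity.
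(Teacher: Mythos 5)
Your treatment of well-definedness and self-similarity is correct and follows the same route as the paper: reduce everything to the kernel integrals $\int|\sum_j\theta_j g(t_j,\cdot)|^\alpha\,dm$, use $\alpha<\gamma$ for $\E'|S_\gamma(1)|^\alpha<\infty$, and combine the $(1-\beta)$-homogeneity of $\nu_\beta$ with the $\beta$- and $1/\gamma$-self-similarity of $M_\beta$ and $S_\gamma$ to extract $c^{\alpha H}$. Your reduction of stationary increments — splitting at $x=u$, using stationarity of the increments of $S_\gamma$ to replace $\Phi(t_j+r)-\Phi(r)$ by $S_\gamma\bigl(M_\beta(t_j+r)-M_\beta(r)\bigr)$, and isolating the mixture identity for the Mittag-Leffler increment process — is also exactly the paper's setup, and your mass consistency check is a sensible sanity test.

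However, there is a genuine gap: the step you yourself call ``the technical heart'' is asserted, not proved. You need the identity
\begin{equation*}
\int_0^u \E'\Bigl|\sum_j\theta_j S_\gamma\bigl(M_\beta(t_j+r)-M_\beta(r)\bigr)\Bigr|^\alpha (u-r)^{-\beta}\,dr
=\int_0^\infty \E'\Bigl|\sum_j\theta_j S_\gamma\bigl(M_\beta((t_j-y)_+)\bigr)\Bigr|^\alpha\bigl(y^{-\beta}-(u+y)^{-\beta}\bigr)\,dy,
\end{equation*}
and ``it should follow from the regenerative structure'' is not an argument — the equality of total masses you verify is necessary but far from sufficient, since the two mixing measures weight the parameter $r$ (respectively $y$) very differently. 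The paper closes this gap in two concrete steps. First, by the strong Markov property of the subordinator $S_\beta$ in \eqref{MLprocess}, $\bigl(M_\beta(t+r)-M_\beta(r),\,t\ge0\bigr)\eid\bigl(M_\beta((t-\delta_r)_+),\,t\ge0\bigr)$, where $\delta_r=S_\beta(M_\beta(r))-r$ is the overshoot of level $r$, independent of the fresh copy of $M_\beta$, with the explicit law $P(\delta_r\in dy)=\frac{\sin\beta\pi}{\pi}\,r^\beta(r+y)^{-1}y^{-\beta}\,dy$ (Exercise 5.6 in \cite{kyprianou:2006}). Substituting this into the left-hand side turns it into a double integral over $(r,y)$, and the second step is the classical formula $\int_0^1\bigl(\tfrac{t}{1-t}\bigr)^\beta\tfrac{1}{t+y}\,dt=\tfrac{\pi}{\sin\beta\pi}\bigl[1-\bigl(\tfrac{y}{1+y}\bigr)^\beta\bigr]$, which evaluates the inner $r$-integral and produces precisely the weight $y^{-\beta}-(u+y)^{-\beta}$. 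Without the explicit overshoot density and this integral evaluation your proof does not establish stationarity of increments; everything before that point is, as you note, a reduction to this identity rather than a proof of it.
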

\begin{proof}
The boundary cases $\beta=0$ and $\beta=1$ are discussed in Remark
\ref{rk:boundary}, so we will consider now the case $0<\beta<1$.

The argument is similar to that of Theorem 3.1 in
\cite{owada:samorodnitsky:2015}.  
To see that $(\Y(t))$ is well defined, notice that for $t>0$, by
self-similarity of $(S_\gamma)$, 
\begin{eqnarray*}
&&\E'\int_0^{\infty}|S_\gamma(\MM))|^\alpha(1-\beta)x^{-\beta}dx \nn \\
&=& \E'|S_\gamma(1)|^\alpha\, 
\E'\int_0^{\infty}\MM)^{\alpha/\gamma}(1-\beta)x^{-\beta}dx\\
&\le& \E'|S_\gamma(1)|^\alpha\, \E'\M(t)^{\alpha/\gamma} t^{1-\bb}
      <\infty\,; \nn
\end{eqnarray*}
the finiteness of $\E'|S_\gamma(1)|^\alpha$ follows since $\alpha<\gamma$.
Next, let $c>0$, $t_1,\ldots,t_k>0$, and
$\theta_1,\ldots,\theta_k\in\R$.  We have
\begin{eqnarray*}
\E'\exp\lk i\sum_{j=1}^k\theta_j\Y(ct_j)\rk&=& \exp\lk -\int_0^\infty
\E'\lb\sum_{j=1}^k\theta_jS_\gamma(\M((ct_j-x)_+))\rb^\alpha (1-\bb)x^{-\bb} dx \rk\\
&=&\nn\exp\lk-c^{1-\beta}\int_0^\infty
\E'\lb \sum_{j=1}^k \theta_j S_\gamma(\M(c(t_j-y)_+)\rb^\alpha (1-\beta) y^{-\beta} dy\rk\,,
\end{eqnarray*}
where we substituted $x=cy$ in the last step.  Because of the
self-similarity of $(\M)$ and $(S_\gamma)$, the above equals
\begin{eqnarray}
&&\nn \exp\lk -c^{1-\beta}\int_0^\infty
\E'\lb \sum_{j=1}^k \theta_j c^{\beta/\gamma}S_\gamma(\M((t_j-y)_+))\rb^\alpha (1-\beta) y^{-\beta} dy\rk\\
&=& \nn \exp\lk-c^{\alpha H}\int_0^\infty
\E'\lb \sum_{j=1}^k \theta_j S_\gamma(\M((t_j-y)_+))\rb^\alpha (1-\beta) y^{-\beta} dy\rk\\
&=& \nn \exp\lk i\sum_{j=1}^k\theta_jc^H\Y(t_j)\rk.
\end{eqnarray}
This shows that $\Y$ is $H$-ss with $H$ given by \eqref{eq:H}.

Finally, we check stationarity of the increments of $\Y$.  We must
check that for any $s>0$, $t_1,\ldots,t_k>0$, and
$\theta_1,\ldots,\theta_k\in\R$,
\begin{eqnarray*}
&&\nn\int_0^\infty
\E'\lb \sum_{j=1}^k\theta_j\bigl[ S_\gamma(\M((t_j+s-x)_+) )- S_\gamma(M_\bb((s-x)_+))\bigr]\rb^\alpha x^{-\bb} dx \\
&=& \int_0^\infty
\E'\lb \sum_{j=1}^k\theta_j S_\gamma(M_\bb((t_j-x)_+))\rb^\alpha x^{-\bb} dx.
\end{eqnarray*}
Split the integral in the left-hand side according to the sign of $s-x$ and use
the substitutions $r=s-x$ and $-r=s-x$  to get
\begin{eqnarray}\label{eq:SI split}
&&\nn\int_0^s
\E'\lb\sum_{j=1}^k\theta_j\bigl[ S_\gamma(M_\bb(t_j+r)) - S_\gamma(M_\bb(r))\bigr]\rb^\alpha (s-r)^{-\bb} dr \\
&+&\nn \int_0^\infty \E'\lb\sum_{j=1}^k\theta_j S_\gamma(M_\bb((t_j-r)_+))\rb^\alpha (s+r)^{-\bb} dr.
\end{eqnarray}
Rearranging terms, we are left to check
\begin{eqnarray}\label{eq:SI}
&&\nn\int_0^s
\E'\lb\sum_{j=1}^k\theta_j\bigl[ S_\gamma(M_\bb(t_j+r)) - S_\gamma(M_\bb(r))\bigr]\rb^\alpha (s-r)^{-\bb} dr \\
&=& \int_0^\infty \E'\lb\sum_{j=1}^k\theta_j
S_\gamma(M_\bb((t_j-x)_+))\rb^\alpha \( x^{-\bb}-(s+x)^{-\bb}\) dx.
\end{eqnarray}
However, by the stationarity of the increments of $(S_\gamma)$, the
left-hand side of \eqref{eq:SI} reduces to
$$
\int_0^s
\E'\lb\sum_{j=1}^k\theta_j S_\gamma\bigl( M_\bb(t_j+r)-M_\bb(r)\bigr)\rb^\alpha
(s-r)^{-\bb} dr.
$$

Let $\delta_r = S_\beta\( M_\beta(r)\)-r$ be the overshoot of
the level $r>0$ by the $\beta$-stable subordinator $\(
S_{\beta}(t)\)$ related to $\(
M_{\beta}(t)\)$ by \eqref{MLprocess}. The law of
$\delta_r$ is given by
$$
P(\delta_r \in dx) = \frac{\sin \beta \pi}{\pi} r^{\beta} (r+x)^{-1}
x^{-\beta}\, dx, \ x>0\,;
$$
see  Exercise 5.6 in \cite{kyprianou:2006}. Further, by the strong
Markov property of the stable subordinator we have
$$
\( M_{\beta}(t+r) - M_{\beta}(r), \, t\geq 0 \)
\stackrel{d}{=} \( M_{\beta}((t-\delta_r)_+), \, t\geq 0 \)\,,
$$
with the understanding that $M_\beta$ and $\delta_r$ in the right-hand
side  are independent. We conclude that
\begin{equation} \label{e:si.check.2}
\int_{0}^s \E' \left|\sum_{j=1}^k \theta_j S_\gamma\bigl( M_{\beta}(t_j+r) -
M_{\beta}(r)\bigr)\right|^{\alpha} (s-r)^{-\beta} dr
\end{equation}
$$
= \frac{\sin \beta \pi}{\pi} \int_0^{\infty} \int_0^s \E'
\left|\sum_{j=1}^k \theta_j S_\gamma \bigl( M_{\beta}((t_j-x)_+)\bigr)\right|^{\alpha} r^{\beta}
(r+x)^{-1} x^{-\beta} (s-r)^{-\beta} dr dx.
$$
Using the integration formula
$$
\int_0^1 \left(\frac{t}{1-t}\right)^\beta \frac{1}{t+y}\, dt =
\frac{\pi}{\sin \beta \pi}\left[
  1-\left(\frac{y}{1+y}\right)^\beta\right], \ y>0\,,
$$
given on p. 338 in  \cite{gradshteyn:ryzhik:1980}, shows that
\eqref{e:si.check.2} is equivalent to \eqref{eq:SI}.
\end{proof}

\begin{remark} \label{rk:increm.prop}
The increment process 
$$
V_n^{(\alpha,\beta,\gamma)} = Y_{\alpha,\beta,\gamma}(n+1) -
Y_{\alpha,\beta,\gamma}(n), \quad n=0,1,2,\dots\,,
$$
is a stationary \SaS\ process. The argument in Theorem 3.5 in
\cite{owada:samorodnitsky:2015} can be used to check that, in the
case $0<\beta<1$, this
process corresponds to a conservative null operator $T$ in the
sense of \eqref{e:shifted.f}. Furthermore, this process is mixing. See
\cite{rosinski:1995}  and \cite{samorodnitsky:2005} for details. On
the other hand, in the case  {$\beta=0$}, the increment process is an
i.i.d. sequence and, hence, corresponds to a dissipative operator
$T$; recall Remark \ref{rk:boundary}. Furthermore, in the case
$\beta=1$, the increment process is sub-stable, hence corresponds to a
positive operator $T$. In particular, it is not even an ergodic
process. 
\end{remark} 

\section{Some Markov chain theory} \label{sec:markov}

The class of stationary \id\ processes for which we
will prove a functional central limit theorem, is based on a dynamical
system related to Example 5.5 in \cite{owada:samorodnitsky:2015}. In the
present paper we allow the Markov chains involved in the construction
to take values in a space more general than $\bbz$. 

We follow the setup of \cite{chen:2000} and prove additional
 {auxiliary} results we will need in the sequel.  Let  $(Z_n)$
be an irreducible Harris recurrent Markov chain (or simply {\it Harris
chain} in the sequel) on state space $(\X,\XX)$ with transition
probability $P(x,A)$ and  invariant measure $\pi(A)$. Our general
reference for such processes is \cite{meyn:tweedie:2009}. As usually,
we assume that the $\sigma$-field $\XX$ is countably generated. 
We denote by $\P_{\nu}$ the probability law of $(Z_n)$ with initial
distribution $\nu$, and by $\E_{\nu}$  the expectation with respect to
$\P_{\nu}$. 

The collections of sets of finite, and of finite and positive
$\pi$-measure are 
denoted by 
$$
\XX^+:=\{A\in\XX  \text{ such that } \pi(A)>0\}, \ \
\XX^+_0:=\{A\in\XX  \text{ such that }0<\pi(A)<\infty\}\,.
$$
Since the Markov chain is Harris, for any set  $A\in\XX^+$ and any initial
distribution $\nu$, on  {an} event of full probability with respect to
$\P_{\nu}$, the sequence of return times to $A$ defined by 
$\tau_A(0)= 0$ and 
\begin{equation} \label{e:ret.time}
\tau_A(k) = \inf\{n>\tau_A(k-1): Z_n\in A\} \ \ \text{for $k\geq 1$,}
\end{equation}
is a well defined finite sequence. An alternative name for $\tau_A(1)$
is simply $\tau_A$. 

For a set $A\in\XX^+_0$ we denote by 
\begin{equation} \label{e:an.A}
a_n(\nu,A)=\pi(A)^{-1} \sum_{k=1}^{n} \int_\X P^k(x,A) \nu(dx)\,, \ \  n\ge 1
\end{equation}
the mean number of visits to the set starting from initial
distribution $\nu$, up to time $n$, relative to its
$\pi$-measure. When needed, we extend the domain of $a$ to
$[1,\infty)$ by rounding the argument down to the nearest integer. 

An  {\it atom} $\aaa$ of the Markov chain is a subset of $\X$ such that
$P(x,\cdot)=P(y,\cdot)$ for all $x,y\in \aaa$. For an atom the
notation $\P_\aaa$ and $\E_\aaa$ makes an obvious sense. A finite union of atoms
is a set of the form 
\begin{equation} \label{e:union.atoms}
D = \bigcup_{i=1}^q \aaa_i\,, \ q<\infty\,,
\end{equation}
where $\aaa_j\in  \XX^+_0, \, j=1,\ldots, q$ are atoms. 
Any such set  is a special set, otherwise known as a $D$-set, 
see  Definition 5.4 in \cite{nummelin:1984} and \cite{orey:1971},
p. 29. The importance of this fact is that for any two special sets
(and, hence, for any two finite unions of atoms) $D_1$ and $D_2$, 
\be \label{asymptotics}
\lim_{n\to\infty}\frac{a_n(\nu_1,D_1)}{a_n(\nu_2,D_2)}=1
\ee
for any two initial distributions $\nu_1$ and $\nu_2$; see Theorem 2
in Chapter 2 of \cite{orey:1971} or Theorem 7.3 in
\cite{nummelin:1984} (without the assumption of ``speciality''  there
might be $\pi$-small exceptional sets  of initial states). This fact
allows us to use the notation  $a_t := a_t(\nu,D)$ for any arbitrary
fixed special set $D$ and initial distribution $\nu$ when only the
limiting behaviour as $t\to\infty$ of this function is important. For
concreteness, we fix a special set $D$ and use
$\nu(dx)=\pi(D)^{-1}1_D(x)\pi(dx)$. 

A Harris chain is said to be {\it $\beta$-regular}, $0 \leq \beta \leq
1$, if the function $(a_t)$ is  regularly varying at infinity with
exponent $\beta$, i.e.  
\begin{equation*} \label{a_n reg varying}
\lim_{t\to\infty}a_{ct}/a_t=c^\beta \ \ \text{for any $c>0$.}
\end{equation*} 

Let $f:\, \X\to\bbr$ be a measurable function. For a set $A\in\XX^+_0$
the sequence 
\begin{equation*}\label{def:excursion}
\xi_k(A)= \sum_{j=\tau_A(k-1)+1}^{\tau_{A}(k)} f(Z_j), \ k=1,2,\ldots
\end{equation*}
is a well defined sequence of random variables under any law
$\P_\nu$. It is a sequence of i.i.d. random variables under $\P_\nu$ 
if $A$ is an
atom and $\nu$ is concentrated on $A$. 

The following two conditions on a function $f$ will be imposed
throughout the paper.

\begin{equation} \label{e:cond.1}
f\in L^1(\pi)\cap L^2(\pi)  \ \ \text{and} \ \ \int_{\X} f(x) \,
\pi(dx) =0\,,
\end{equation}
 
\begin{equation} \label{e:cond.2}
 \sum_{k=1}^\infty f(\cdot) P^k f(\cdot) \text{ converges in }L^1(\X,\XX,\pi).
\end{equation}
It follows that  
\be\label{variance}
\sigma_f^2:= \int_{\X} f^2(x)\, \pi(dx)+ 2\sum_{k=1}^\infty
\int_{\X} f(x) P^k f(x) \, \pi(dx) < \infty\,.
\ee
We refer the reader to \cite{chen:1999a} and \cite{chen:2000} for a
discussion and examples of functions $f$ satisfying conditions
\eqref{e:cond.1} and \eqref{e:cond.2}. An important implication of the
above assumptions is the following result, proven in Lemma 2.3 of
\cite{chen:1999a}: if $\aaa\in  \XX^+_0$ is an atom such that
$\inf_{x\in\aaa} |f(x)|>0$ (i.e. an $f$-atom), then 
\begin{equation} \label{e:var.excursion}
\E_\aaa \bigl( \xi_1(\aaa)\bigr)^2=\frac{1}{\pi(\aaa)} \sigma_f^2\,.
\end{equation}

We prove next a functional version of Theorem 1.3 in
\cite{chen:2000}. In infinite ergodic theory related results are known
as Darling-Kac theorems; see \cite{aaronson:1981},
\cite{thaler:zweimuller:2006}, and
\cite{owada:samorodnitsky:2015}. The result below can be viewed as a
mean-zero functional Darling-Kac theorem for Harris chains.

\begin{thm}\label{thm:thm1}
Suppose that $(Z_n)$ is a $\beta$-regular
Harris recurrent chain with  $0<\beta\le 1$, 
and suppose $f$ satisfies conditions
\eqref{e:cond.1} and \eqref{e:cond.2}.  Set for $nt\in\N$
\begin{equation*} \label{eq:lin. interp.}
S_{nt}(f)=\sum_{k=1}^{nt} f(Z_k),
\end{equation*}
and for all other $\,t>0$  define $S_{nt}(f)$ by linear interpolation.
Then for any initial distribution $\nu$,  under $\P_\nu$, 
\begin{equation}\label{eq:fclt}
\(\frac{1}{\sqrt{a_n}} S_{nt}(f),\,  t\ge
0\)\ \stackrel{n\to\infty}{\Longrightarrow}\
\((\Gamma(\beta+1))^{1/2}\sigma_f B(\M(t)) ,\,  t\ge 0\),
\end{equation}
weakly in $\CC[0,\infty)$, 
where $B$ is a standard Brownian motion,  which is independent of the
Mittag-Leffler process $\M$.  If $\beta=0$, then the same convergence
holds in finite dimensional distributions. 
\end{thm}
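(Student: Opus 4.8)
The plan is to exploit the regenerative structure of the Harris chain and to recognize the statement as a continuous-time-random-walk type scaling limit: finite-variance ``jumps'' (the excursion sums of $f$) are paired with heavy-tailed ``waiting times'' (the excursion lengths), and the Mittag-Leffler time change arises as the scaling limit of the inverse of the associated counting (local time) process.

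First I would pass to an atomic setting. Since $(Z_n)$ is Harris recurrent, applying the Nummelin splitting technique to the (split) chain produces an atom $\aaa\in\XX^+_0$, and $\beta$-regularity of $(a_t)$ as well as conditions \eqref{e:cond.1}--\eqref{e:cond.2} are preserved; arranging $\aaa$ to be an $f$-atom lets me invoke \eqref{e:var.excursion}. Writing $\tau_\aaa(k)$ for the return times \eqref{e:ret.time}, $\xi_k=\xi_k(\aaa)$ for the excursion sums, and $L_k=\tau_\aaa(k)-\tau_\aaa(k-1)$ for the excursion lengths, the pairs $(\xi_k,L_k)$ are i.i.d.\ under $\P_\aaa$. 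Let $N_m=\max\{k:\tau_\aaa(k)\le m\}$ be the number of completed excursions (the local time at $\aaa$) by time $m$, and decompose
\begin{equation*}
S_{nt}(f)=\sum_{k=1}^{N_{\lfloor nt\rfloor}}\xi_k+R_{nt},
\end{equation*}
where $R_{nt}$ collects the initial segment and the single incomplete excursion straddling time $nt$. A maximal inequality over one excursion, together with $\max_{k\le N_{nt}}|\xi_k|=o(\sqrt{a_n})$ (a consequence of $\E_\aaa\xi_1^2<\infty$), shows that $R_{n\cdot}/\sqrt{a_n}\to0$ uniformly on compacts, so $R$ is negligible and may be dropped.

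Next I would establish the two marginal scaling limits. By \eqref{e:cond.1}, \eqref{e:cond.2} and \eqref{e:var.excursion} the $\xi_k$ are i.i.d., mean zero, with $\E_\aaa\xi_1^2=\sigma_f^2/\pi(\aaa)<\infty$, so Donsker's invariance principle gives $a_n^{-1/2}\sum_{k=1}^{\lfloor a_n u\rfloor}\xi_k\Rightarrow \big(\sigma_f/\sqrt{\pi(\aaa)}\big)W(u)$ in $\CC[0,\infty)$ for a standard Brownian motion $W$. Separately, $\beta$-regularity of $(a_t)$ places the lengths $L_k$ in the domain of attraction of a $\beta$-stable law, so $\tau_\aaa(m)=\sum_{k\le m}L_k$ scales to a $\beta$-stable subordinator; inverting, the Darling--Kac theorem for Harris chains (the counting input behind Theorem 1.3 of \cite{chen:2000}) yields $N_{n\cdot}/a_n\Rightarrow \pi(\aaa)\,\Gamma(1+\beta)\,\M(\cdot)$. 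The normalizing constant is pinned down by $\E_\nu N_{nt}=\pi(\aaa)\,a_{nt}\sim\pi(\aaa)\,a_n t^\beta$ (using \eqref{e:an.A}, \eqref{asymptotics} and regular variation) against $\E\M(t)=t^\beta/\Gamma(1+\beta)$. For $\beta=1$ this degenerates, by the law of large numbers, to the deterministic linear time change $N_{n\cdot}/a_n\to\pi(\aaa)(\cdot)=\pi(\aaa)\Gamma(2)\M(\cdot)$.

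The heart of the proof, and the step I expect to be the main obstacle, is upgrading these two marginal limits to a \emph{joint} convergence in which $W$ and $\M$ are \emph{independent}, despite the within-excursion dependence between each $\xi_k$ and its own length $L_k$. The mechanism is a decoupling of Gaussian and heavy-tailed scales: since $\E_\aaa\xi_1^2<\infty$, the fluctuations of $\sum_k\xi_k$ come from the bulk of typical excursions and carry no jumps, whereas $N_{n\cdot}$ (hence $\M$) is governed by the finitely many atypically long excursions selected by the heavy tail of $L_1$. I would make this precise by truncating at a length level $b$: the part $a_n^{-1/2}\sum_{k\le N_{nt}}\xi_k\one(L_k>b)$ has conditional variance at most $(N_{nt}/a_n)\,\E_\aaa[\xi_1^2\one(L_1>b)]$, which tends to $0$ as $b\to\infty$ uniformly in $n$, while the complementary bounded-length part is, by a conditional Lindeberg central limit theorem (conditioning on $\sigma(L_1,L_2,\dots)$), asymptotically Gaussian and asymptotically independent of the large-$L_k$ structure driving $\M$. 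The law of total variance $\sigma_f^2/\pi(\aaa)=\E_\aaa[\Var_\aaa(\xi_1\mid L_1)]+\Var_\aaa(\E_\aaa[\xi_1\mid L_1])$ is the convenient bookkeeping device: both summands feed independent Gaussian contributions recombining to the full variance, confirming simultaneously the independence and the limiting constant. With
\begin{equation*}
\Bigl(a_n^{-1/2}\sum_{k=1}^{\lfloor a_n u\rfloor}\xi_k,\ N_{n\cdot}/a_n\Bigr)\Rightarrow \Bigl(\tfrac{\sigma_f}{\sqrt{\pi(\aaa)}}\,W(u),\ \pi(\aaa)\Gamma(1+\beta)\,\M(\cdot)\Bigr),\qquad W\perp\M,
\end{equation*}
in hand, I would apply the random-time-change (composition) theorem, the limiting time change $\M$ being continuous and nondecreasing, to get $a_n^{-1/2}S_{n\cdot}(f)\Rightarrow \tfrac{\sigma_f}{\sqrt{\pi(\aaa)}}\,W\bigl(\pi(\aaa)\Gamma(1+\beta)\,\M(\cdot)\bigr)$. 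The Brownian scaling $W(c\,\cdot)\eid\sqrt{c}\,W(\cdot)$ pulls the factor $\sqrt{\pi(\aaa)\Gamma(1+\beta)}$ outside the time change, the two powers of $\pi(\aaa)$ cancel, and there remains exactly $(\Gamma(\beta+1))^{1/2}\sigma_f\,B(\M(\cdot))$ with $B$ a standard Brownian motion independent of $\M$. Tightness in $\CC[0,\infty)$ follows from the joint tightness of the two converging processes, the negligibility of $R$, and continuity of the composition map on the relevant subset; for $\beta=0$ the time change is degenerate (a single exponential value off the origin), the composition fails to be continuous, and only the convergence of finite-dimensional distributions is asserted, which the same characteristic-function computation delivers.
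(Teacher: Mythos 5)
Your overall architecture for the atomic case --- regeneration at an atom, joint convergence of the excursion sums and the local time, then composition with the continuous time change --- matches the paper's Lemma \ref{lemma:atom}, and your constant bookkeeping via \eqref{e:var.excursion} is correct. However, there is a genuine gap at the very first step: you cannot in general ``apply the Nummelin splitting technique'' directly to a Harris chain to manufacture an atom with the single-step i.i.d.\ regeneration structure \eqref{eq:iid}. Splitting requires a one-step minorization $P(x,A)\ge b\,\one_C(x)\pi_C(A)$ as in \eqref{assumption}; Harris recurrence only guarantees a minorization for some iterate $P^m$ (or for a resolvent kernel), and for $m>1$ the split chain does not regenerate at every visit in the way your decomposition needs. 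The paper therefore has a third layer that your proposal omits entirely: it first proves the theorem under \eqref{assumption}, then approximates the original chain by the geometrically sampled resolvent chain $(Z_{\Gamma_k})$ (which always satisfies \eqref{assumption}), obtains the limit for $\sum_{k\le N(nt)}f(Z_{\Gamma_k})$ with variance constant $\sigma_{p,f}^2$, and finally lets the sampling parameter $p\to 0$ via a second converging-together argument, controlling $\sum_{k\le N(nt)}f(Z_{\Gamma_k})-\sum_{k\le nt}f(Z_k)$ by comparing the $\delta_k$- and $(1-\delta_k)$-thinned sums. This transfer is roughly half of the proof, and without it your argument only covers chains admitting a one-step small set on which $|f|$ is bounded away from zero.

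Within the atomic case, your truncation-plus-conditional-Lindeberg route to the independence of $B$ and $\M$ does not quite close the loop: after conditioning on the lengths, the centerings $\E_\aaa[\xi_k\mid L_k]$ are functions of the very variables $L_k$ that drive the subordinator, so the asymptotic independence of their normalized sum from the stable limit is exactly what remains to be proved --- the law of total variance only reallocates the variance, it does not decouple anything. The paper's argument is cleaner and you should adopt it: since the pairs $(\xi_k,\tau_\aaa(k)-\tau_\aaa(k-1))$ are i.i.d.\ under $\P_\aaa$, every subsequential limit of the bivariate partial-sum process is a bivariate L\'evy process with one marginal a Brownian motion and the other a subordinator, and the L\'evy--It\^o decomposition forces the Gaussian and jump components, hence the two marginals, to be independent. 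Two further small points: controlling the incomplete excursion requires $\E_\aaa W_1^2<\infty$ for the within-excursion maximum $W_1$ (Lemma 2.1 of \cite{chen:2000}), which is stronger than $\E_\aaa\xi_1^2<\infty$; and passing from $\P_\aaa$ back to a general initial law $\nu$ is handled in the paper by Zweim\"uller's result on initial-distribution insensitivity of distributional limits, not merely by discarding the pre-atom segment.
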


As in \cite{chen:2000}, the proof of Theorem \ref{thm:thm1} proceeds
in three steps: regeneration, the split chain method of
\cite{nummelin:1978},
and finally the use of a geometrically sampled approximation, also
known as a resolvent approximation (see Chapter 5 in \cite{meyn:tweedie:2009}).

In the first step we derive a functional version of a part of Lemma 2.3 in
\cite{chen:2000}, assuming existence of an atom $\aaa\in\XX_0^+$. For
a related result see Theorem 5.1 in \cite{kasahara:1984}. 

We define the discrete local time at $\aaa$ by
\begin{equation*}
\ell_{\aaa}(n):= \max\{k\ge 0: \tau_\aaa(k)\le n\}
\end{equation*}
Then  the sequence
\be\label{eq:iid}
\left( \bigl(\xi_k(\aaa), \tau_\aaa(k)-\tau_\aaa(k-1)\bigr), \
k=1,2,\ldots\right) \text{ \ is  i.i.d under $\P_\aaa$.}
\ee
\begin{lem}\label{lemma:atom}
The convergence \eqref{eq:fclt} holds 
under the assumptions of Theorem \ref{thm:thm1}, with the additional
assumption that  {$(Z_n)$}   has an $f$-atom $\aaa$. 
\end{lem}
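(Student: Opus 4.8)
The plan is to exploit the i.i.d.\ regenerative structure \eqref{eq:iid} afforded by the $f$-atom $\aaa$ and to realize the limit as a subordinated Brownian motion. First I would decompose the partial sum into complete excursions between successive visits to $\aaa$. Writing $W_m=\sum_{k=1}^m\xi_k(\aaa)$ and recalling the discrete local time $\ell_\aaa(n)=\max\{k\ge0:\tau_\aaa(k)\le n\}$, one has $S_n(f)=W_{\ell_\aaa(n)}$ up to the contributions of the (at most two) incomplete excursions straddling times $0$ and $n$; I would check that these boundary terms, after division by $\sqrt{a_n}$, are negligible uniformly on compacts, e.g.\ via a maximal inequality for a single excursion together with $\ell_\aaa(nt)=O_P(a_n)$. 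Under $\P_\aaa$ the summands $\xi_k(\aaa)$ are i.i.d.\ with $\E_\aaa\xi_1(\aaa)=\pi(f)/\pi(\aaa)=0$ by \eqref{e:cond.1} and the Kac formula, and with $\E_\aaa(\xi_1(\aaa))^2=\sigma_f^2/\pi(\aaa)<\infty$ by \eqref{e:var.excursion}; thus $W_m$ is a mean-zero, finite-variance random walk.

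The core step is a joint functional limit theorem for the i.i.d.\ pairs $\bigl(\xi_k(\aaa),\,\tau_\aaa(k)-\tau_\aaa(k-1)\bigr)$. On one hand, Donsker's theorem gives $a_n^{-1/2}W_{\lfloor a_n u\rfloor}\Rightarrow(\sigma_f/\sqrt{\pi(\aaa)})\,B(u)$. On the other hand, $\beta$-regularity forces the return-time tail $\P_\aaa(\tau_\aaa>n)$ to be regularly varying of index $-\beta$ (the Dynkin--Lamperti/Tauberian correspondence with $a_n\in RV_\beta$), so for $0<\beta<1$ the normalized partial sums of the return times converge to a $\beta$-stable subordinator $S_\beta$ in $D[0,\infty)$. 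The point I expect to be the main obstacle is proving that these two limits are \emph{independent}, which is precisely what makes $B$ and $\M$ independent in \eqref{eq:fclt}. Because $\xi_1(\aaa)$ has finite variance while $\tau_\aaa$ is heavy-tailed, the two coordinates are normalized at different orders ($\sqrt m$ versus a sequence in $RV_{1/\beta}$ with $1/\beta>1/2$); I would establish asymptotic independence by showing that the joint characteristic function factorizes in the limit, equivalently that a single large return time carries a negligible $\xi$-increment after $\sqrt{a_n}$-scaling, so no heavy-tailed contribution leaks into the $W$-coordinate.

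Granting joint convergence with independent components, the conclusion follows from the continuous mapping theorem. Since $\ell_\aaa$ is the first-passage inverse of $(\tau_\aaa(k))$, applying the (a.s.\ continuous) inversion map to the subordinator limit and using the $\beta$-self-similarity of $\M=S_\beta^{\leftarrow}$ yields $a_n^{-1}\ell_\aaa(nt)\Rightarrow\pi(\aaa)\Gamma(\beta+1)\,\M(t)$, the constant being pinned down by matching first moments via $\E_\aaa\ell_\aaa(n)\sim\pi(\aaa)a_n$ (from \eqref{e:an.A} and \eqref{asymptotics}) and $\E\M(1)=1/\Gamma(\beta+1)$ (read off from \eqref{MT.transform}). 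Composing the Donsker limit with this continuous, strictly increasing time change then gives
$$
\frac{1}{\sqrt{a_n}}S_{nt}(f)\ \Longrightarrow\ \frac{\sigma_f}{\sqrt{\pi(\aaa)}}\,B\bigl(\pi(\aaa)\Gamma(\beta+1)\,\M(t)\bigr)\ \eid\ (\Gamma(\beta+1))^{1/2}\sigma_f\,B(\M(t)),
$$
where the last equality uses Brownian scaling $B(c\,\cdot)\eid\sqrt{c}\,B(\cdot)$ and, reassuringly, the atom-dependent factor $\pi(\aaa)$ cancels, so the limit is intrinsic; independence of $B$ and $\M$ is inherited from the previous step. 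The boundary case $\beta=1$ is handled separately and more simply, since there $\M(t)=t$, $\Gamma(2)=1$, the return times obey a law of large numbers, and the statement reduces to the ordinary Donsker theorem for $W_{\ell_\aaa(n)}$. Finally, to upgrade finite-dimensional convergence to weak convergence in $\CC[0,\infty)$ I would prove tightness, again via a maximal inequality controlling the oscillation of $W$ over the random number of excursions falling in a short time interval.
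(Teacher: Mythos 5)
Your plan is essentially the paper's proof: excursion decomposition at the $f$-atom, joint convergence of the i.i.d.\ pairs $\bigl(\xi_k(\aaa),\tau_\aaa(k)-\tau_\aaa(k-1)\bigr)$ to a Brownian motion and a $\beta$-stable subordinator, inversion of the subordinator to get $\ell_\aaa(nt)/a_n\Rightarrow\pi(\aaa)\Gamma(\beta+1)M_\beta(t)$, composition by continuous mapping (with the $\pi(\aaa)$ factors cancelling exactly as you note), and a maximal inequality over excursions for the boundary terms and tightness. The only cosmetic difference is the independence step: where you propose factorizing the joint characteristic function, the paper observes that every subsequential limit of the bivariate i.i.d.-sum process is a bivariate L\'evy process with one Gaussian and one pure-jump marginal, so independence is immediate from the L\'evy--It\^o decomposition; either route works. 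One small omission: the lemma also covers $\beta=0$ (finite-dimensional convergence with $M_0(t)$ a fixed exponential variable for $t>0$), where the subordinator/inversion picture degenerates and the paper instead invokes Theorem 2.3 of Chen (1999) for $\ell_\aaa(nt)/\phi(n)$ and Lemma 2.3 of Chen (2000) for independence; your argument as written does not address this case.
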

\begin{proof}
Unless stated otherwise, all the distributional statements below are
understood to be under $\P_\nu$, for an arbitrary fixed initial
distribution $\nu$. Let
 $$
\phi(n)=\sum_{k=1}^{n} P^k(\aaa,\aaa), \ n=1,2,\ldots\,,
$$
and note that by \eqref{asymptotics},
\begin{equation} \label{e:phi.a}
\lim_{n\to\infty}  \phi(n)/a_n = \pi(\aaa)\,.
\end{equation}

Let $\phi^{-1}$ denote an asymptotic inverse of $\phi$. By Lemma 3.4 in \cite{chen:1999b}, as $n\to\ff$, the stochastic
process 
$$
T_n(t):=\tau_\aaa(\lfloor nt\rfloor)/\phi^{-1}(n), \ t\ge 0
$$ 
converges weakly in the $J_1$-topology on $\DD[0,\infty)$ to a
$\beta$-stable subordinator with the Laplace transform
$\exp\{-t \theta^\beta/\Gamma(\beta+1)\}$ for
\mbox{$0<\beta<1$}, and to a line with slope one when  $\beta=1$ (we
will deal with $\beta=0$ in a  moment). Setting
\be\label{eq:brownian component}
W_{n}(t)=n^{-1/2}\sum_{k=1}^{nt}\xi_k(\aaa), \ t\geq 0
\ee
(defined for fractional values of $nt$ by linear interpolation), the laws  of
$\bigl\{ \bigl( (W_{n}(t),\, t\geq 0), \, (T_n(t), \, t\geq 
0)\bigr)\bigr\}_{n\in\N}$  are tight in 
$\CC[0,\ff) \times \DD[0,\ff)$ since the marginal laws converge weakly
in the corresponding spaces as  $n\to\infty$.
By \eqref{eq:iid} every subsequential limit is a bivariate L\'evy
process, with one marginal process a Brownian motion, and the other
marginal process a subordinator. By   the L\'evy-It\^o decomposition,
the  Brownian and subordinator  components must be independent, so all
subsequential limits coincide, and the entire bivariate sequence converges weakly in $\CC[0,\ff) \times
\DD[0,\ff)$ to a bivariate L\'evy process with independent marginals. 

If  $0<\beta<1$, weak convergence of $(T_n(t))$ is easily seen to
imply, by inversion,  
finite dimensional convergence, as $n\to\infty$,  
of the sequence
$(\ell_\aaa(nt)/ \phi(n))$ (defined for fractional $nt$ by linear
interpolation) to $\Gamma(\beta+1) M_\beta$. 
Since the paths are increasing, and the limit is continuous, this
guarantees convergence in  $\DD[0,\ff)$; see \cite{bingham:1971} and
\cite{yamazato:2009}.  Similarly, we obtain weak 
convergence to a line with slope one for $\beta=1$. 

In the case $\beta=0$, by Theorem 2.3 of \cite{chen:1999b} and
\eqref{e:phi.a}, we see that the sequence of processes $(\ell_\aaa(nt)/
\phi(n))$ 
converges in finite-dimensional distributions to a  limit, equal
to zero at $t=0$ and consisting of the same standard 
exponential random variable repeated for all $t>0$.
Moreover, by  Lemma 2.3 in \cite{chen:2000}, the exponential random
variable is independent of the limiting Brownian motion in 
\eqref{eq:brownian component}, when we perform a subsequential limit
scheme for the bivariate process  $\bigl\{
\bigl( (W_{n}(t),\, t\geq 0), \, (\ell_\aaa(nt)/
\phi(n))\bigr)\bigr\}_{n\in\N}$,  similarly to the above. This time
the convergence is in finite-dimensional distributions. 

Suppose now that $0<\beta\leq 1$. 
If $\DD_+[0,\ff)$ denotes the subset of $\DD[0,\ff)$ consisting of
nonnegative functions, then the composition map $(x,y)
\longrightarrow x \circ y$ from $\CC[0,\ff) \times
\DD_+[0,\ff)$ to $\DD[0,\ff)$ is continuous at a point $(x,y)$ if $y$
is continuous. It follows, therefore, by
the continuous mapping theorem  that 
\begin{equation} \label{e:comp}
\(\frac{1}{\sqrt{\phi(n)}}\sum_{k=1}^{\ell_\aaa(\lfloor nt\rfloor)} \xi_k(\aaa) , t\ge
0\)\stackrel{n\to\infty}{\Longrightarrow}\(\bigl(
\Gamma(\beta+1)\E_\aaa\xi_1(\aaa)^2\bigr)^{1/2}  B(\M(t)) , t\ge 0\)
\end{equation}
in $\DD[0,\ff)$, with $(M_{\beta}(t))$  independent of $(B(t))$ in the
right hand side. By \eqref{e:phi.a} we obtain  
\begin{equation*}
\(\frac{1}{\sqrt{a_n}}\sum_{k=1}^{\ell_\aaa(\lfloor nt\rfloor)} \xi_k(\aaa) , t\ge
0\)\stackrel{n\to\infty}{\Longrightarrow}\((\pi(\aaa) \Gamma(\beta+1)\E_\aaa\xi_1(\aaa)^2)^{1/2}
B(\M(t)) , t\ge 0\).
\end{equation*}
Recalling \eqref{e:var.excursion}, we have shown that 
\begin{equation}\label{eq:lem2.3}
\(\frac{1}{\sqrt{a_n}}\sum_{k=1}^{\ell_\aaa(\lfloor nt\rfloor)} \xi_k(\aaa) , t\ge
0\)\stackrel{n\to\infty}{\Longrightarrow}\((\Gamma(\beta+1))^{1/2}\sigma_f
B(\M(t)) , t\ge 
0\)
\end{equation}
in $\DD[0,\ff)$. 

We now proceed to relate \eqref{eq:lem2.3} to the statement of the
lemma. First of all, Corollary 3 in \cite{zweimuller:2007a} allows us
to simplify the situation and assume that the chain starts at the
$f$-atom $\aaa$. We can write 
\begin{equation}\label{eq:splitsum}
\(\frac{1}{\sqrt{a_n}}\sum_{k=1}^{ nt} f(Z_k),\, t\ge
0\) = \left( \frac{1}{\sqrt{a_n}}\sum_{k=1}^{\ell_\aaa(\lfloor
    nt\rfloor)} \xi_k(\aaa) \right.
\end{equation}
$$
\left.
+\frac{1-nt+\lfloor nt\rfloor}{\sqrt{a_n}}\mathop{\sum_{k=}^{\lfloor
nt\rfloor}}_{\tau_\aaa(\ell_\aaa(\lfloor nt\rfloor) )+1} f(Z_k)
+\frac{nt-\lfloor nt\rfloor}{\sqrt{a_n}}\mathop{\sum_{k=}^{\lfloor
nt\rfloor+1}}_{\tau_\aaa(\ell_\aaa(\lfloor nt\rfloor) )+1} f(Z_k)
,\  t\ge 0\right).
$$
Since the convergence in \eqref{eq:lem2.3} occurs in the $J_1$
topology on $\DD[0,\ff)$ and the limit is continuous (recall that we
are considering 
the case $0<\beta\leq 1$), in order to prove convergence of the
processes in \eqref{eq:splitsum} in $\CC[0,\ff)$, 
 we need only show that the second and the third terms in the right
 hand side of \eqref{eq:splitsum} are negligible in $\CC[0,\ff)$. 
 We treat in details the second term; the third term can be treated
 similarly. Restricting ourselves to $\CC[0,1]$, we will prove that 
\begin{equation*}\label{eq:zero in prob}
\frac{1}{\sqrt{a_n}}\sup_{0\le t\le 1} \left| \mathop{\sum_{k=}^{\lfloor
nt\rfloor}}_{\tau_\aaa(\ell_\aaa(\lfloor nt\rfloor))+1} f(Z_k)\right|
\stackrel{n\to\infty}{\longrightarrow} 0
\end{equation*}
in probability. To this end we rewrite this expression as 
$$
\frac{1}{\sqrt{a_n}}\max_{m=0,\ldots,n} \left|
  \mathop{\sum_{k=}^{m}}_{\tau_\aaa(\ell_\aaa(m))+1} f(Z_k) \right| \le 
\frac{1}{\sqrt{a_n}}\max_{j=0,\ldots,\ell_\aaa(n)}\max_{\stackrel{m=}{\tau_\aaa(j)+1,\ldots,
    \tau_\aaa(j+1)}}\left| \mathop{\sum_{k=}^{m}}_{\tau_\aaa(j)+1}
f(Z_k) \right| .
$$
Letting $W_j$ denote the inner maximum, we must show that for any $\epsilon>0$, choosing $n$ large enough implies
\begin{equation*}
\P_\aaa\(\max_{j=0,\ldots,\ell_\aaa(n)}W_j>\epsilon \sqrt{a_n}\) <\epsilon.
\end{equation*}
Since the sequence $\bigl( \ell_\aaa(n)/a_n\bigr)$ converges weakly,
it is tight, and there exists $M_\eps$  such that for all $n$, $\P_\aaa\(\ell_\aaa(n)>
M_\eps a_n\)<\epsilon/2$. 
 Thus we need only check that for $n$ large enough
\begin{eqnarray}\label{eq:eps over 2}
\P_\aaa\( \max_{0\leq j \leq M_\eps a_n}W_j >\epsilon \sqrt{
  a_n}\) &=&1-\P_\aaa\( \max_{0\leq j \leq M_\eps a_n}W_j \le
             \epsilon \sqrt{a_n}\)\\ 
&=& 1- \(1-\P_\aaa\(W_1^2> \epsilon^2 a_n\)\)^{\lfloor M_\eps
    a_n \rfloor + 1}<\epsilon/2 \nn. 
\end{eqnarray}
To see this we use Lemma 2.1 in \cite{chen:2000} which states that
under our assumptions we have $\E_\aaa W_1^2<\infty.$ Thus,
\begin{equation*}\label{eqn2:excursion to 0}
\P_\aaa\(W_1^2> \epsilon^2 a_n\)= o\bigl( a_n^{-1}\bigr)  \ \text{
  as }\ n\to\infty\,, 
\end{equation*}
which verifies \eqref{eq:eps over 2}. This proves the lemma in the case
$0<\beta\leq 1$. 

If $\beta=0$, then the same argument starting with \eqref{e:comp}
works. The argument is  easier in this case since we only need to prove
convergence in finite-dimensional distributions. We omit the details. 
\end{proof}


We are now ready to prove Theorem \ref{thm:thm1} in general. 
\begin{proof}[Proof of Theorem \ref{thm:thm1}]
As before, the  {distributional} statements below are
understood to be under $\P_\nu$, for an arbitrary fixed initial
distribution $\nu$. 
The split chain method of \cite{nummelin:1978} allows us to
extend the result from the situation in Lemma \ref{lemma:atom}, where
we assumed the existence of an $f$-atom, 
to the case where we only assume that there exists a $C\in\XX^+_0$ such that
\begin{equation}\label{assumption}
\inf_{x\in C}|f(x)|>0\quad\text{and}\quad P(x,A) \ge b
1_{C}(x)\pi_C(A) \quad x\in \X, \ A\in\XX 
\end{equation}
for some $0<b\le 1$ where $\pi_C(\cdot) :=\pi(C)^{-1}\pi(C\cap\cdot)$.

A very brief outline of the split chain method is as follows (see
\cite{nummelin:1978} or Chapter 5 in \cite{meyn:tweedie:2009} for more
details). One can enlarge the probability space in order to obtain an
extra sequence of Bernoulli random variables $(Y_n)$. 
The $(Y_n)$ are chosen so that the {\it split chain} $(Z_n,Y_n)$ is a
Harris chain on $\X\times\{0,1\}$ 
and such that $C\times\{1\}$ is an $f$-atom (where $f$ is extended to
$\X\times\{0,1\}$  in the natural way). 
Moreover, this can be done so that conditions \eqref{e:cond.1} and
\eqref{e:cond.2} continue to  hold for the split chain,  and also 
\eqref{variance} holds for $\tilde P$ and $\tilde\pi$ (the 
transition kernel and invariant measure for the split chain.) Then an
application of Lemma \ref{lemma:atom} to the split chain proves the 
claim of the theorem under the assumption \eqref{assumption}. 

The final step is to get rid of assumption \eqref{assumption}, so
we no longer assume that \eqref{assumption} holds to start with. 
We follow the usual procedure which, for (a small) $p>0$  uses the
renewal process 
\begin{equation*}
N(t):= \max\{n\geq 1:\Gamma_n\le t\}, \ t\geq 0\,,
\end{equation*}
with i.i.d. renewal intervals $(\Gamma_{n+1}-\Gamma_n)$, which have
the geometric distribution 
\be\label{geometric}
\P(\Gamma_1=k):=(1-p)p^{k-1}, \ k=1,2,\ldots\,.
\ee
The idea is to approximate the original chain $(Z_n)$ by its resolvent
chain $(Z_{\Gamma_k})$, where $(\Gamma_k)$ are as in
\eqref{geometric}, and independent of $(Z_n)$. We then let  $p\to 0$. 

The resolvent chain is just $(Z_n)$ observed at the negative binomial
renewal times $(\Gamma_k)$  (this chain is also called a
geometrically sampled chain). Its transition kernel is
$$
P_p(x,A):=(1-p)\sum_{k=1}^\infty p^{k-1} P^k(x,A),
$$
and, clearly, $\pi$ is still an invariant measure.
 For the resolvent chain, the assumptions \eqref{e:cond.1} and
 \eqref{e:cond.2} allow one to define, similarly to \eqref{variance}, 
\begin{align*}
\sigma_{p,f}^2 &:= \int_{\X} f^2(x)\pi(dx)+ 2\sum_{k=1}^\infty \int_{\X}
f(x) P_p^k f(x) \pi(dx) \\[5pt]
&= \int_{\X} f^2(x)\pi(dx)+ 2(1-p)\sum_{k=1}^\infty
\int_{\X} f(x) P^k f(x) \pi(dx).
\end{align*}
Furthermore,  the resolvent chain is $\beta$-regular if the original
chain is, and the sequence $(a_n^{(p)}$ corresponding to the resolvent
chain (see the discussion following \eqref{asymptotics}) satisfies 
$$
a_n^{(p)} \sim (1-p)^{1-\beta}a_n, \ n\to\infty\,;
$$
see (4.26) in \cite{chen:2000}. The latter paper also shows that 
the resolvent chain $(Z_{\Gamma_k})$
satisfies \eqref{assumption} (see also Theorem 5.2.1 in
\cite{meyn:tweedie:2009}).  

Suppose that $0<\beta\leq 1$. 
Since we have already proved the theorem under the assumption
\eqref{assumption}, we can use  Theorem 2.15(c) in
\cite{jacod:shiryaev:1987},  and the ``converging together lemma'' in
Proposition 3.1 of 
\cite{resnick:2007} to obtain 
\begin{align*}
&\[\(\frac{1}{\sqrt{a_n}}\sum_{k=1}^{nt} f(Z_{\Gamma_k}), \ t\ge 0\),\(\frac 1 n N(nt), \ t\ge 0\)\] \srlim{\Longrightarrow}  \\[5pt]
&\[\(\sqrt{(1-p)^{1-\bb}\, \Gamma(\beta+1)}\,  {\sigma_{p,f}}B(\M(t)),\ t\ge 0\),\bigl((1-p)t,\ t\ge 0\bigr)\]
\end{align*}
in $\CC[0,\infty)\times\DD[0,\infty)$.  As before, it is legitimate to
apply the continuous mapping theorem, to obtain 
\begin{equation} \label{e:conv.D}
\(\frac{1}{\sqrt{a_n}}\sum_{k=1}^{N(nt)} f(Z_{\Gamma_k}), \ t\ge 0\) \srlim{\Longrightarrow}
\(\sqrt{(1-p)^{1-\bb} \, \Gamma(\beta+1)}\, \sigma_{p,f}B\(\M((1-p)t)\),\ t\ge 0\)
\end{equation}
in $\DD[0,\infty)$. Repeating the same argument with the continuous
version of the counting process $(N(t))$, given by
$$
N_c(t) =  {N(t)} + \frac{t-\Gamma_n}{\Gamma_{n+1}-\Gamma_n} \ \ \text{for
  $\Gamma_n\leq t\leq \Gamma_{n+1}$, $n=0,1,\ldots$,}
$$
leads to 
\begin{equation} \label{e:conv.C}
\(\frac{1}{\sqrt{a_n}}\sum_{k=1}^{N_c(nt)} f(Z_{\Gamma_k}), \ t\ge 0\) \srlim{\Longrightarrow}
\(\sqrt{(1-p)^{1-\bb} \, \Gamma(\beta+1)}\, \sigma_{p,f}B\(\M((1-p)t)\),\ t\ge 0\)\,,
\end{equation}
this time in $\CC[0,\infty)$. We now show that the convergence
statement in \eqref{e:conv.C} is sufficiently close to the required
convergence statement in \eqref{eq:fclt}, and for this purpose the
$\DD[0,\infty)$ version in \eqref{e:conv.D} will be useful.

We will restrict ourselves to the  {interval} $[0,1]$. Since 
\be\nn
\sqrt{1-p}\ \sigma_{p,f} \longrightarrow \sigma_f\quad\text{as}\ \ p\to0\,,
\ee
the second converging together theorem (see Theorem 3.5 in
\cite{resnick:2007}), says that \eqref{eq:fclt} will follow once we
check that for any $\eps> 0$,
\begin{equation} \label{e:link.conv}
\lim_{p\to 0} \limsup_{n\to\infty} \P_\nu\(\sup_{0\le t\le 1}
\frac{1}{\sqrt{a_n}}\lb \sum_{k=1}^{N_c(nt)}
f(Z_{\Gamma_k})-\sum_{k=1}^{nt} f(Z_k) \rb>\eps\) = 0\,. 
\end{equation}
To this end we bound the probability in the left hand side of
\eqref{e:link.conv} by a sum of  {3} probabilities: 
$$
\P_\nu\(\sup_{0\le t\le 1}
\frac{1}{\sqrt{a_n}}\lb \sum_{k=1}^{N_c(nt)}
f(Z_{\Gamma_k})-\sum_{k=1}^{N(nt)} f(Z_{\Gamma_k}) \rb>\frac{\eps}{3}\) 
+ \P_\nu\(\sup_{0\le t\le 1}
\frac{1}{\sqrt{a_n}}\lb \sum_{k=1}^{N(nt)}
f(Z_{\Gamma_k})-\sum_{k=1}^{\lfloor nt\rfloor} f(Z_k) \rb>\frac{\eps}{3}\) 
$$
\begin{equation} \label{e:decomp3}
+ \P_\nu\(\sup_{0\le t\le 1}
\frac{1}{\sqrt{a_n}}\lb \sum_{k=1}^{\lfloor nt\rfloor}
f(Z_k)-\sum_{k=1}^{nt} f(Z_k) \rb>\frac{\eps}{3}\) \,.
\end{equation}
Keep for a moment $0<p<1/2$ fixed. Note that  
$$
\limsup_{n\to\infty} \P_\nu\(\sup_{0\le t\le 1}
\frac{1}{\sqrt{a_n}}\lb \sum_{k=1}^{N_c(nt)}
f(Z_{\Gamma_k})-\sum_{k=1}^{N(nt)} f(Z_{\Gamma_k}) \rb>\frac{\eps}{3}\) 
$$
$$
\leq \limsup_{n\to\infty} \P_\nu\( \frac{1}{\sqrt{a_n}} \max_{k\leq
  2n} |f(Z_k)|>\frac{\eps}{3}\)  = 0
$$
because, eventually, $N_c(n)\leq 2n$ and the convergence in
\eqref{e:conv.D} is to a continuous limit. A similar argument shows
that for a fixed $0<p<1/2$, 
$$
\limsup_{n\to\infty} \P_\nu\(\sup_{0\le t\le 1}
\frac{1}{\sqrt{a_n}}\lb \sum_{k=1}^{\lfloor nt\rfloor}
f(Z_k)-\sum_{k=1}^{nt} f(Z_k) \rb>\frac{\eps}{3}\) =0\,.
$$
It remains to handle the middle probability in \eqref{e:decomp3}. 
Let $\delta_k=1$ at the renewal times and $0$
otherwise.  By the self-similarity of the Brownian motion and the
Mittag-Leffler process, the convergence statement in \eqref{e:conv.D} 
 can be rewritten as 
$$
\(\frac{1}{\sqrt{a_n}}\sum_{k=1}^{\lfloor nt\rfloor}\delta_k f(Z_{k}), \ t\ge 0\) \srlim{\Longrightarrow}
\bigl(\sqrt{(1-p)\, \Gamma(\beta+1)}\,\sigma_{p,f}B(\M(t)),\ t\ge 0\bigr).
$$
Replacing $p$ by $1-p$ and, hence, each $\delta_k$ by $1-\delta_k$,
gives us  also 
\begin{equation*}\label{eq:1-delta}
\(\frac{1}{\sqrt{a_n}}\sum_{k=1}^{\lfloor nt\rfloor}(1-\delta_k) f(Z_{k}), \ t\ge 0\) \srlim{\Longrightarrow}
\bigl(\sqrt{p \, \Gamma(\beta+1)}\,\sigma_{1-p,f}B(\M(t)),\ t\ge 0\bigr).
\end{equation*}
Both of these weak convergence statements take place in the $J_1$
topology on $\DD[0,\ff)$. 
Therefore, 
\be\nn
\P\(\sup_{0\le t\le 1} \frac{1}{\sqrt{a_n}}\lb \sum_{k=1}^{N(nt)} f(Z_{\Gamma_k})-\sum_{k=1}^{\lfloor nt\rfloor}\delta_k f(Z_k)
\rb>\frac{\eps}{3}\)\srlim{\longrightarrow}\P\(\sup_{0\le t\le
  1}\sqrt{p {\Gamma(\beta+1)}}\,\sigma_{1-p,f}\lb B(\M(t))\rb >\frac{\eps}{3}\)\,, 
\ee
which goes to $0$ as $p\to 0$. This completes the proof in the case
$0<\beta\leq 1$. Once again, the case $\beta=0$ is similar but easier,
since we are only claiming finite-dimensional weak convergence.
\end{proof}
%

\begin{remark} \label{rk:l2}
If in Theorem \ref{thm:thm1} the function $f$ is supported by a finite
union of atoms $D$, then we also have 
\begin{equation} \label{e:l2bound}
\sup_{n\geq 1} \, \E_\nu \sup_{0\leq t\leq L} \left(
  \frac{1}{\sqrt{a_n}}\,   S_{nt}(f)\right)^2 <\infty
\end{equation} 
for the initial distribution $\nu(dx)=\pi(D)^{-1}1_D(x)\pi(dx)$ 
and any $0<L<\infty$. 
To see this, it is enough to consider the case where the initial
distribution $\nu$ is given, instead, by
$\nu(dx)=\pi(\aaa)^{-1}1_\aaa(x)\pi(dx)$, where $\aaa$ is a single
atom of positive measure, forming a part of $D$.  We use the notation
in Lemma \ref{lemma:atom}. It is elementary to check that for each
$n\geq 1$, 
\begin{equation*}
\hat \ell_{\aaa}(n):= \min\{k\ge 0: \tau_\aaa(k)> n\} =
\ell_{\aaa}(n)+1 
\end{equation*} 
is a stopping time with respect to the discrete time filtration 
$$
\FF_k =\sigma \bigl( \xi_j(\aaa), \, \tau_\aaa(j), \, j=1,\ldots,
k\bigr), \ k=1,2,\ldots\,,
$$
while the process 
$$
\sum_{j=1}^k  \xi_j(\aaa), \ k=1,2,\ldots
$$
is a martingale with respect to the same filtration. Increasing $L$, if necessary, to make it an integer, we see that
\begin{equation}  \label{e:mg.doob}
\E_\aaa \sup_{0\leq t\leq L} \left(
  \frac{1}{\sqrt{a_n}}\,   S_{nt}(f)\right)^2
\leq  \frac{2}{a_n} \E_\aaa \max_{m=1,\ldots, \hat \ell_{\aaa}(\lfloor
  nL\rfloor)} \left( \sum_{j=1}^m  \xi_j(\aaa)\right)^2\,.
\end{equation}
By Doob's
inequality and the optional stopping theorem,  this can be further bounded by 
$$
\frac{8}{a_n} \E_\aaa \left( \sum_{j=1}^{\hat \ell_{\aaa}(\lfloor
  nL\rfloor)}  \xi_j(\aaa)\right)^2 = 8\E_\aaa (\xi_1(\aaa))^2\, 
\frac{\E_\aaa \, \hat \ell_{\aaa}(\lfloor
  nL\rfloor)}{a_n}\,.
$$
Since 
$$
\E_\aaa (\xi_1(\aaa))^2  <\infty \ \ \text{and} \ \ \sup_{n\geq 1} \frac{\E_\aaa \, \hat
  \ell_{\aaa}(n)}{a_n}<\infty 
$$
by the assumption and the discussion at the beginning of the proof of
Lemma \ref{lemma:atom}, the claim \eqref{e:l2bound} follows. 
\end{remark}

We will also need a version of Theorem \ref{thm:thm1}, in which the
initial distribution is not fixed but, rather,  diffuses, with $n$,
over the set  $\{\tau_{D} \leq n\}$. We will only  {consider} the case of
a finite union of atoms $D = \bigcup_{i=1}^q \aaa_i \in \XX^{+}_0$.  

We first reformulate our Markovian setup in the language of standard
infinite ergodic theory. Let $E =\X^\bbn$ be the path space
corresponding to the Markov chain. We equip $E$ with the 
usual cylindrical $\sigma$-field $\EE =\XX^\bbn$. Let $T$ be
the left shift operator on the path space $E$, i.e.  
$T(\bx)=  
(x_2,x_3,\ldots)$ 
for $\bx=(x_1,x_2,\ldots)\in E$.  Note that $T$
preserves the measure $\mu$ on $E$ defined by 
\be\label{def:mu}
\mu(A):=\int_\X \P_x(A) \,\pi(dx), \text{ for events } A\in\EE
\ee
(as usually, the notation $\P_x$ refers to the initial distribution
$\nu=\delta_x$, $x\in\X$.) Notice that the measure $\mu$ is infinite
if the invariant measure $\pi$ is. This is, of course, always the
case if $0\leq \beta<1$.  In the sequel we will usually assume that
$\pi$ is infinite even when $\beta=1$. 

We will need certain ergodic-theoretical properties of the quadruple
$(E, \EE, \mu, T)$. 
As shown in \cite{aaronson:lin:weiss:1979}, $T$ is \textit{conservative} and \textit{ergodic}; this implies that
$$
\sum_{n=1}^{\infty} 1_A \circ T^n = \infty \ \ \text{$\mu$-a.e. on }
E  
$$
for every $A \in \mathcal{E}$ with $\mu(A) > 0$.  For a finite union
of atoms $D\in\XX^+_0$ as in \eqref{e:union.atoms}, let  
\be\label{def:d omega}
{\tilde{D}}:=\{\bx\in E:\,  x_1\in D\}
\ee
be the set of paths which start in $D$. The first return time to $D$
as defined in \eqref{e:ret.time} can then be viewed as a function on
the product space $E$ via  
$$
\tau_{D}(\bx) = \inf \{ n \geq 0: \, T^n\bx\in \tilde D\} = 
\inf \{ n \geq 0:\,  x_n \in D  \}\,, \ \ \ \bx=(x_1,x_2,\dots) \in E\,.
$$
The \textit{wandering rate sequence} (corresponding to the
set $\tilde D$) is the sequence $\mu(\tau_{D} \leq n)$, $n=1,2,\ldots$. 
Since $T$ is measure-preserving, this is a finite sequence. Since the
Markov chain  is $\beta$-regular, this sequence turns out to be
regularly varying as well, as we show below. For the
ergodic-theoretical notions used in the proof see
\cite{aaronson:1997} and \cite{zweimuller:2009}.  

\begin{lem}  \label{l:RV.wander}
Suppose that $(Z_n)$ is a $\beta$-regular
Harris recurrent chain with  $0\leq \beta\le 1$, with an infinite
invariant measure $\pi$. Let $D$ be a
finite union of atoms. 
Then the wandering rate $\mu(\tau_{{D}} \leq n)$ is a regularly
varying sequence of exponent $1-\beta$. More precisely, 
$$
\mu(\tau_{{D}}\le n) \sim
\frac{1}{\Gamma(1+\beta)\Gamma(2-\beta)}\frac{n}{a_n}\ \ \text{as
  $n\to\infty$.} 
$$
\end{lem}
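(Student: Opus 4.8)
The plan is to pass to generating functions and read off the wandering rate from the known regular variation of $(a_n)$ by Karamata's Tauberian theorem. Write $D=\bigcup_{i=1}^q\aaa_i$ and set $g_j:=\mu(\tau_D=j)$, so that the wandering rate is the partial sum $\mu(\tau_D\le n)=\sum_{j=0}^n g_j$; let $G(s)=\sum_{j\ge0}g_js^j$ be its generating function. The starting point is a first-entrance decomposition combined with the invariance of $\pi$. Since $\pi$ is invariant, $\int_\X P^k(x,D)\,\pi(dx)=\pi(D)$ for every $k\ge0$; decomposing the event $\{Z_k\in D\}$ (under $\P_x$ with $Z_0=x$) according to the first hitting time $j=\tau_D\le k$ and applying the strong Markov property gives, for each $k$,
\[
\pi(D)=\sum_{j=0}^k\sum_{i=1}^q \psi^{(i)}_{k-j}\,g^{(i)}_j,
\]
where $g^{(i)}_j=\mu(\tau_D=j,\,Z_j\in\aaa_i)$ and $\psi^{(i)}_m=P^m(\aaa_i,D)$ (well defined because $\aaa_i$ is an atom, so $P^m(\cdot,D)$ is constant on $\aaa_i$). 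In generating-function form this becomes the single scalar identity $\sum_{i=1}^q\Psi^{(i)}(s)\,G^{(i)}(s)=\pi(D)/(1-s)$, with $\Psi^{(i)}(s)=\sum_m\psi^{(i)}_ms^m=\sum_{i'=1}^qU_{ii'}(s)$, $U_{ii'}(s)=\sum_{m\ge0}P^m(\aaa_i,\aaa_{i'})s^m$, and $G^{(i)}(s)=\sum_jg^{(i)}_js^j$ with $\sum_iG^{(i)}=G$.

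The main obstacle is that this is one equation in the $q$ unknowns $G^{(i)}$, and disentangling them is exactly what the asymptotic equivalence \eqref{asymptotics} is there to resolve. For any two atoms, taking $\nu$ concentrated on $\aaa_i$ in \eqref{e:an.A} gives $\sum_{k=1}^nP^k(\aaa_i,\aaa_{i'})=\pi(\aaa_{i'})\,a_n(\nu,\aaa_{i'})\sim\pi(\aaa_{i'})a_n$ by \eqref{asymptotics}, so all the Green-function generating functions share the same leading behaviour. Writing $a_t=t^\beta L(t)$ with $L$ slowly varying (and $a_t\to\infty$ by Harris recurrence with infinite $\pi$), Karamata's Tauberian theorem — in the form valid for all nonnegative exponents, requiring only nonnegativity of the coefficients — yields $U_{ii'}(s)\sim\Gamma(\beta+1)\pi(\aaa_{i'})\,a_{1/(1-s)}$ as $s\uparrow1$, for each of the finitely many pairs $(i,i')$. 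Summing over $i'$, every $\Psi^{(i)}(s)$ has the common asymptotic $\Psi(s):=\Gamma(\beta+1)\pi(D)\,a_{1/(1-s)}$. Since all $G^{(i)}\ge0$, sandwiching $\min_i\Psi^{(i)}(s)\le\Psi^{(i)}(s)\le\max_i\Psi^{(i)}(s)$ — both extremes being $\sim\Psi(s)$ because there are finitely many $i$ — lets me replace $\sum_i\Psi^{(i)}G^{(i)}$ by $\Psi(s)G(s)$, so the convolution identity collapses to $G(s)\sim\pi(D)/\bigl((1-s)\Psi(s)\bigr)=\bigl(\Gamma(\beta+1)(1-s)a_{1/(1-s)}\bigr)^{-1}$.

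It remains to invert. Substituting $a_{1/(1-s)}=(1-s)^{-\beta}L(1/(1-s))$ gives $G(s)\sim\Gamma(\beta+1)^{-1}(1-s)^{-(1-\beta)}/L(1/(1-s))$, which is regularly varying of exponent $1-\beta$ at $s=1$. Applying Karamata's Tauberian theorem once more, now to the nonnegative coefficients $g_j$, yields
\[
\mu(\tau_D\le n)=\sum_{j=0}^ng_j\sim\frac{1}{\Gamma(\beta+1)\Gamma(2-\beta)}\,\frac{n^{1-\beta}}{L(n)}=\frac{1}{\Gamma(\beta+1)\Gamma(2-\beta)}\,\frac{n}{a_n},
\]
which is the assertion, and in particular exhibits the wandering rate as regularly varying of exponent $1-\beta$. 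The only point to watch is the two boundary cases: when $\beta=0$ the first Tauberian step has exponent $0$ and when $\beta=1$ the second does. Feller's formulation of Karamata's theorem remains valid at exponent $0$ (only nonnegativity of the coefficients is used, which holds throughout), so the computation goes through uniformly for $0\le\beta\le1$ without a separate argument.
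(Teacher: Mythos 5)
Your proof is correct, and it takes a genuinely different route from the paper's. The paper passes to the dual semigroup $Q$ and the associated path-space measure $\hat\mu$, verifies that the cylinder $\tilde D$ is a Darling--Kac set for the shift (the point of dualizing being that the relevant transfer operator then acts as $\hat T_Q^k 1_{\tilde D}(\bx)=P^k(x_1,D)$, which takes at most $q$ values on $\tilde D$, so the required uniform convergence is automatic), invokes Proposition 3.8.7 of \cite{aaronson:1997} to obtain the wandering rate of $\tilde D$ under $\hat\mu$, and finally transfers back via $\mu(\tau_D\le n)=\hat\mu(\tau_D\le n)$. You instead work directly under $\mu$: the first-entrance decomposition at the atoms, integrated against the invariant measure, yields the exact renewal identity $\pi(D)=\sum_{j\le k}\sum_i g_j^{(i)}\psi_{k-j}^{(i)}$, and two applications of Karamata's theorem for power series (the first Abelian, the second genuinely Tauberian but legitimate because the $g_j$ are nonnegative and one compares partial sums rather than individual coefficients) produce exactly the constant $1/(\Gamma(1+\beta)\Gamma(2-\beta))$. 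The sandwiching step that collapses $\sum_i\Psi^{(i)}G^{(i)}$ to $\Psi(s)G(s)$ is the right device for the fact that the single convolution equation cannot determine the individual $G^{(i)}$, and it is precisely where \eqref{asymptotics} (all atoms share the same return sequence up to the factor $\pi(\aaa_{i'})$) enters. Your argument is more elementary and self-contained --- it never leaves the Markov-chain setting and replaces the infinite-ergodic-theory input by a renewal equation --- at the cost of using the atom structure in an essential way; the paper's Darling--Kac route is what extends to special sets that are not unions of atoms, as its subsequent remark about the Gaussian random walk indicates. Two cosmetic points: the decomposition needs only the simple Markov property at the deterministic time $j$, not the strong Markov property; and you should take the $\aaa_i$ pairwise disjoint so that the events $\{Z_j\in\aaa_i\}$ partition $\{\tau_D=j\}$ --- harmless, since disjointifying atoms again yields atoms.
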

\begin{proof}
Let $Q$ be a Markov semigroup on $\X$ which is dual to $P$ with
respect to the measure $\pi$. That is, 
for every $k=1,2,\ldots$ and every bounded measurable function $f:\,
\X^k \to \bbr$,
$$
\int_\X \pi(dx_1)\int_\X P(x_1,dx_2)\ldots \int_\X
P(x_{k-1},dx_k)f(x_1,\ldots, x_k)
$$
$$
= \int_\X \pi(dx_k)\int_\X Q(x_k,dx_{k-1})\ldots \int_\X
Q(x_{2},dx_1)f(x_1,\ldots, x_k)\,.
$$
Since $\pi$ is an invariant measure for $P$, it is also invariant for $Q$. Define a $\sigma$-finite measure on $(E,\EE)$ analogous to $\mu$ in \eqref{def:mu}, but using $Q$ instead of $P$, i.e.
$$
\hat \mu(A) :=\int_\X \Q_x(A) \, \pi(dx), \text{ for events } A\in\EE\,.
$$
We claim  that the set $\tilde D$ given in \eqref{def:d omega} is a
Darling-Kac set for the shift  operator $T$ on the space $(E,
\EE, \hat \mu)$. According to the definition of a Darling-Kac
set (see Chapter 3 in \cite{aaronson:1997}), it is enough to show 
that  
\begin{equation}  \label{e:DK.cond}
\frac{1}{a_n} \sum_{k=1}^n \hat T_Q^k 1_{\tilde D}(\bx) \to \hat \mu(\tilde D) = \pi(D) \ \ \text{uniformly $\hat\mu$-a.e. on } \tilde D,
\end{equation}
where $\hat T_Q: L^1(\hat \mu) \to L^1(\hat \mu)$ is the dual operator
defined by  
$$
\hat T_Q g(\bx) := \frac{d(\hat \mu_g \circ T^{-1})}{d\hat \mu}\,
(\bx) 
$$
with 
$$\hat \mu_g(A) := \int_A g(\bx) \, \hat \mu(d\bx), \ A\in\EE
$$
a signed measure on $(E, \EE)$, absolutely continuous with
respect to $\hat\mu$. 
Note that the dual operator $\hat T_Q$ satisfies $\hat T^k_Q
1_{{\tilde{D}}}(\bx) = P^k(x_1,D)$; see Example 2 in
\cite{aaronson:1981}. Since we may choose $a_n$ as in
\eqref{e:an.A} with $A=D$ (recall that a finite union of atoms is a
special set), it is elementary to check that
\eqref{e:DK.cond} holds, and the 
uniformity of the convergence stems from the fact that the left
side in \eqref{e:DK.cond}  takes at most $q$ different
values on $\tilde D$.  Applying Proposition 3.8.7 in
\cite{aaronson:1997}, we obtain 
$$
\hat \mu(\tau_{D}\le n) \sim \frac{1}{\Gamma(1+\beta)\Gamma(2-\beta)}\frac{n}{a_n}\,.
$$
However, by duality,
$$
\mu(\tau_{D}\le n)  = \hat \mu(\tau_{D}\le n)\,.
$$
Since $(a_n)$ is regularly varying with exponent $\beta$, the exponent
of regular variation of  the wandering sequence is, obviously,
$1-\beta$. 
\end{proof}
 
\begin{remark}
Referring to the proof of Lemma \ref{l:RV.wander}, 
it should be noted that using in \eqref{def:d omega} 
a set $D\in\XX^+_0$ different from a finite union of atoms, may still
define a set $\tilde D$ that is a  Darling-Kac set. For  example,
suppose that $(Z_k)$ is a 
random walk on $\R$ with standard Gaussian steps; that is,
$$
P(x,B) = \P(G\in B-x), \ x\in \R, \ \ \text{$B$ Borel.}
$$
 Here $G\sim N(0,1)$. In this case the Lebesgue measure $\pi$ on
 $\bbr$  is an invariant measure. 
It is not hard to see that $D=[0,1]$ is a  special set
that is not a finite union of atoms. Further, $a_n \sim \sqrt{n/2\pi}$
as $n\to\infty$. 
We claim that the set $\tilde D$ of paths starting in $[0,1]$,
is a Darling-Kac set  for the conservative measure-preserving shift
operator on $E$. To  see this note that, in this case, there is no
difference 
between the semigroup $P$ and the dual semigroup $Q$, and, hence, 
$$
\frac{1}{\sqrt{n}}\sum_{k=1}^n \hat T^k1_{\tilde D} =
\frac{1}{\sqrt{n}}\sum_{k=1}^n P^k1_{[0,1]}\to  \frac{1}{\sqrt{2\pi}}
$$
uniformly on $[0,1]$.
\end{remark}

We can view the sums $S_n(f)$ as being defined on the path space
$E$ by setting 
$h(\bx) := f(x_1)$, $\bx=(x_1,x_2,\dots) \in
E$, and then writing 
$$
\hat S_n(f)(\bx)  = \sum_{k=1}^n h \circ T^k (\bx) =
\sum_{k=1}^n f(x_k) \,. 
$$
This defines the notation used in Theorem \ref{cor:entrance time}
below. 
Define a sequence of probability measures $(\mu_n)$ on $E$ by 
$$
\mu_n(A):=\frac{\mu(A\cap\{\tau_{{D}}\le n\})}{\mu(\tau_{{D}}\le n)}, \ A\in\EE.$$
\begin{thm}\label{cor:entrance time}
Suppose that, in addition to the hypotheses of Theorem
\ref{thm:thm1},  $f$ is supported by a finite union of atoms $D =
\bigcup_{i=1}^q \aaa_i \in \XX^{+}_0$. Then for every $L>0$, 
under the measures $\mu_{nL}$,  
\be\nn
 \(\frac{1}{\sqrt{a_n}} \hat S_{nt}(f),\,  0\leq t\leq L\)\
\stackrel{n\to\infty}{\Longrightarrow}
\((\Gamma(\beta+1))^{1/2}\sigma_f B(\M(t-T_\infty^{L})) ,\,  0\leq
t\leq L\),
\ee 
where $T_\infty^{L}$ is independent of the process $B(M_\beta(t))$ and
$\P(T_\infty^{L}\le x)=\left(x / L \right)^{1-\beta}$ for
$x\in[0,L]$ (in particular, $T_\infty^{L}=0$ a.s. if $\beta=1$). The
convergence is weak convergence in $\CC[0,L]$ for $0<\beta\leq 1$ and
convergence in
finite-dimensional distributions if $\beta=0$. Furthermore, for all
$0\leq \beta\leq 1$,
\begin{equation} \label{e:miss.lemma}
\sup_{n\ge 1}\int_E \(\frac{\hat S_n(f)}{\sqrt{a_n}}\)^2\, d \mu_n <\infty\,.
\end{equation}
\end{thm}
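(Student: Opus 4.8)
The plan is to decompose each path at its first entrance time $\tau_{D}$ into $D$ and to exploit that, since $f$ vanishes off $D$, the sum $\hat S_{nt}(f)$ only begins to accumulate once the path reaches $D$: we have $f(x_k)=0$ for every $k<\tau_{D}$, so $\hat S_{nt}(f)=\sum_{k=\tau_{D}}^{\lfloor nt\rfloor}f(x_k)$ when $\lfloor nt\rfloor\ge\tau_{D}$ and $\hat S_{nt}(f)=0$ otherwise. First I would identify the limit of the normalized entrance time. For $0\le s\le L$ we have $\mu_{nL}(\tau_{D}\le ns)=\mu(\tau_{D}\le ns)/\mu(\tau_{D}\le nL)$, and since by Lemma \ref{l:RV.wander} the wandering rate $\mu(\tau_{D}\le n)$ is regularly varying of exponent $1-\bb$, this ratio tends to $(s/L)^{1-\bb}$. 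Hence $\tau_{D}/n\Rightarrow T_\infty^L$ under $\mu_{nL}$, which in particular yields the boundary laws: $T_\infty^L$ is uniform on $[0,L]$ when $\bb=0$ and $T_\infty^L=0$ a.s. when $\bb=1$.

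Next I would parametrize the post-entrance excursion by its own clock. By the strong Markov property at $\tau_{D}$, conditionally on $\tau_{D}=m$ and on the entrance state $x_m=y\in D$, the shifted path $(x_{m+j})_{j\ge0}$ is a copy of the chain started at $y$; thus $\tfrac{1}{\sqrt{a_n}}\sum_{k=\tau_{D}}^{\tau_{D}+\lfloor n\tau\rfloor}f(x_k)$ is, up to the negligible term $f(y)/\sqrt{a_n}$, equal to $\tfrac{1}{\sqrt{a_n}}$ times a fresh chain sum over $\lfloor n\tau\rfloor$ steps. By Theorem \ref{thm:thm1} this excursion, indexed by its intrinsic time $\tau\in[0,L]$, converges to $(\Gamma(\bb+1))^{1/2}\sigma_f B(\M(\tau))$, and crucially the limit is the same for every starting state. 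Since $D=\bigcup_{i=1}^q\aaa_i$ is a finite union of atoms, only the finitely many laws $\P_{\aaa_i}$ occur, so this convergence is uniform over the entrance data. A conditioning argument then factorizes expectations $\E_{\mu_{nL}}[\psi(\tau_{D}/n)\,\Phi(\text{excursion})]$ into $\E[\psi(T_\infty^L)]\,\E[\Phi((\Gamma(\bb+1))^{1/2}\sigma_f B(\M(\cdot)))]$, giving joint weak convergence of $(\tau_{D}/n,\ \text{intrinsic excursion})$ to $(T_\infty^L,\ (\Gamma(\bb+1))^{1/2}\sigma_f B(\M(\cdot)))$ with independent factors. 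Applying the continuous shift map $(s,w)\mapsto(t\mapsto w((t-s)_+))$ and noting that $\hat S_{nt}(f)$ equals the excursion at intrinsic time $\tau=t-\tau_{D}/n$ then produces the claimed limit $(\Gamma(\bb+1))^{1/2}\sigma_f B(\M((t-T_\infty^L)_+))$; the convergence holds in $\CC[0,L]$ because the limit is continuous, and in finite-dimensional distributions only when $\bb=0$, via the corresponding part of Theorem \ref{thm:thm1}.

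For the moment bound \eqref{e:miss.lemma} I would again condition on $(\tau_{D},x_{\tau_{D}})$. Given entrance into $\aaa_i$ at time $m\le n$, $\hat S_n(f)$ is, up to a lower-order boundary term, a fresh chain sum over at most $n-m$ steps, so the martingale/Doob estimate underlying Remark \ref{rk:l2} gives $\E_{\aaa_i}\bigl[\bigl(\sum_{k=1}^{n-m}f(Z_k)\bigr)^2\bigr]\le C a_{n-m}\le C a_n$, where the last step uses that $(a_n)$ is nondecreasing and $C$ can be taken uniform over the finitely many atoms. Integrating this against the conditional law of $(\tau_{D},x_{\tau_{D}})$ under $\mu_n$ gives $\int_E(\hat S_n(f))^2\,d\mu_n\le C a_n$, which is exactly \eqref{e:miss.lemma}.

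The main obstacle is the joint convergence with independence in the second paragraph: the entrance time enters both as the eventual random shift and as the amount of remaining time, so it cannot be decoupled from the excursion directly. The device that resolves this is to record the excursion on its intrinsic clock rather than the global time $t$; only after this reparametrization is the excursion limit independent of $\tau_{D}$ (with convergence uniform over the finitely many atomic entrance laws), after which the global process is recovered through the continuous shift map. Some care is also needed near the boundary $T_\infty^L=L$, where essentially no intrinsic time remains, but this is harmless since the limit is then close to $0$.
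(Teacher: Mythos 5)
Your proposal is correct and shares the paper's skeleton -- condition on the entrance time $\tau_D$ and the entrance atom, invoke the strong Markov property so that the post-entrance piece is a fresh chain started from one of the finitely many atoms, apply Theorem \ref{thm:thm1} to that piece and Lemma \ref{l:RV.wander} to get $\tau_D/n\Rightarrow T_\infty^L$ -- but you package the gluing step differently. The paper keeps the excursion on the global clock, so the conditional law $\P_{\aaa_i}\bigl(S_{(nt-m)_+}(f)/\sqrt{a_n}>\lambda\bigr)$ still depends on $m$; it removes this dependence by freezing $m/n$ into $K$ blocks, passing to a discrete limit variable $\hat T_{\infty,K}$, and then letting $K\to\infty$ via Billingsley's Theorem 3.2 together with the modulus-of-continuity estimate \eqref{e:bill.1}. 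Your intrinsic-clock reparametrization makes the conditional law of the excursion depend only on the entrance atom (not on $m$), so you get joint weak convergence of $(\tau_D/n,\,\text{excursion})$ with independent limits directly from the factorization $\E_{\mu_{nL}}[\psi(\tau_D/n)\Phi(\cdot)]\to\E\psi\,\E\Phi$, and then recover the global process through the continuous shift map $(s,w)\mapsto w((\cdot-s)_+)$; this subsumes the paper's separate tightness argument, since tightness of the finite mixture over atoms follows from Theorem \ref{thm:thm1}. The trade-off is that your continuous-mapping step genuinely needs process-level convergence of the excursion, so for $\beta=0$ (where Theorem \ref{thm:thm1} only gives finite-dimensional convergence) you would still have to supply the separate splitting argument the paper carries out over the ranges $m\le n(t-\vep)$ and $n(t-\vep)<m\le nt$; your remark about ``care near the boundary'' points at this but does not do it. For \eqref{e:miss.lemma} your route is genuinely different: you condition on the entrance data and reuse the Doob/optional-stopping bound of Remark \ref{rk:l2} together with monotonicity of $(a_n)$, whereas the paper computes $\int_E(\hat S_n(f))^2\,d\mu$ exactly from the invariance of $\pi$, obtaining $n\int f^2\,d\pi+2\sum_{j}\sum_{k}\int fP^kf\,d\pi$ and then dividing by $a_n\mu(\tau_D\le n)\sim Cn$; the paper's computation is more self-contained and makes the role of condition \eqref{e:cond.2} explicit, while yours leans on the martingale machinery already built for the atom case. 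Both arguments are valid.
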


\begin{remark} \label{rk:higher.mom}
By the similar argument to that in Remark \ref{rk:l2}, it is not hard to check that, under the assumptions of Theorem
\ref{cor:entrance time}, if for some $p>2$, $\E_\aaa
\bigl|\xi_1(D)\bigr|^p<\infty$  for any atom $\aaa$  {constituting} $D$,
then we, correspondingly, have 
$$
\sup_{n\ge 1}\int_E \(\frac{|\hat S_n(f)|}{\sqrt{a_n}}\)^p\, d \mu_n
<\infty\,.
$$
\end{remark}

\begin{proof} 
We prove convergence of the finite-dimensional distributions first. 
For typographical convenience we will only consider one-dimensional
distributions. In the case of more than one dimension, the argument is
similar, but the notation is more cumbersome. 
During this proof we may and will modify the definition of
$S_{ nt  }(f)$ to have the sum starting at $k=0$. Suppose first that $L=1$.

Set for $m=1,2\ldots$, $x\in\X$ and $i=1,\ldots, q$
$$
p_m(x,i):=\P_x(Z_{\tau_{{D}}}=\aaa_i|\tau_{{D}}=m)\,.
$$
Then  for $\lambda \in \bbr$ and a large $K=1,2,\ldots$, 
\begin{align}
&\mu_{n}\( \frac{\  \hat S_{nt}(f)}{\sqrt{a_n}}>\lambda \) \label{eq:asymptotic sum} \\[5pt]
&=\sum_{m=0}^{n} \frac{1}{\mu(\tau_{{D}}\le n)}\int_{\X} \P_x\(
 \frac{  S_{
    nt }(f)}{\sqrt{a_n}}>\lambda, \tau_{{D}}=m \) \pi(dx)  \notag \\[5pt]
&=\sum_{m=0}^{n} \int_{\X} \frac{\P_x(\tau_{{D}}=m)}{\mu(\tau_{{D}}\le n)}
\sum_{i=1}^q p_m(x,i)\P_{\aaa_i}\(  \frac{
  S_{(nt -m)_+}(f)}{\sqrt{a_n}}>\lambda \)  \pi(dx) \nn  \\[5pt]
&= \sum_{k=1}^{K} \sum_{m=\lfloor {(k-1)n/K}\rfloor}^{\lfloor{kn/K}\rfloor-1}
\sum_{i=1}^q \P_{\aaa_i}\(  \frac{
  S_{(nt -m)_+}(f)}{\sqrt{a_n}}>\lambda \) \int_{\X} \frac{\P_x(\tau_{{D}}=m)}{\mu(\tau_{{D}}\le n)}\,
 p_m(x,i) \, \pi(dx).\nn
\end{align}
The second equality uses the fact that $f$ is
supported on $D$ and the strong Markov property.
In the last equality, we merely partition $\{0,\ldots,n-1\}$ into $K$ parts .  

  {Suppose} first that $0<\beta\leq 1$.  Working backwards through an
 argument similar to \eqref{eq:asymptotic    sum}, we obtain
\begin{align}\label{e:K.sum}
&\sum_{k=1}^{K} \sum_{m=\lfloor {(k-1)n/K}\rfloor}^{\lfloor{kn/K}\rfloor-1} \sum_{i=1}^q \P_{\aaa_i}\(  \frac{
  S_{n(t-k/K)_+}(f)}{\sqrt{a_n}}>\lambda \) \nn \int_{\X} \frac{\P_x(\tau_{{D}}=m)}{\mu(\tau_{{D}}\le n)}\, p_m(x,i) \pi(dx) \\[5pt]
&= \mu_{n}\( \frac{\  \hat S_{T_n^{K,t}}(f)}{\sqrt{a_n}}>\lambda\)\,,
\end{align} 
where
$$
T_n^{K,t}(\bx):= \( nt -  \bigl( nk/K-\tau_D(\bx)\bigr) \)_+  \ \ \text{if
  $\tau_{{D}}(\bx) \in \bigl[ (k-1)n/K, kn/K \bigr)$.} 
$$
Clearly, 
$$
\frac{\bigl|  nt -T_n^{K,t}(\bx)\bigr|}{n}\leq 1/K\,.
$$

By Theorem \ref{thm:thm1} and Lemma \ref{l:RV.wander}, we see that
\begin{align}
&\sum_{k=1}^K \sum_{m=\lfloor {(k-1)n/K}\rfloor}^{\lfloor{kn/K}\rfloor-1}
\sum_{i=1}^q  \P_{\aaa_i}\(  \frac{
  S_{ n(t-k/K)_+}(f)}{\sqrt{a_n}}>\lambda \)  \int_{\X} \frac{\P_x(\tau_{{D}}=m)}{\mu(\tau_{{D}}\le n)}\,
p_m(x,i) \,\pi(dx)  \label{e:no.dep.m}  \\[5pt]
&\sim \sum_{k=1}^K \P\( (\Gamma(\beta+1))^{1/2}\sigma_f B(M_\beta(t-k/K)_{+})>\lambda\) \frac{\mu\bigl( \lfloor (k-1)n/K \rfloor \leq \tau_{{D}} < \lfloor kn/K \rfloor -1\bigr)}{\mu(\tau_{{D}}\le n)}  \notag \\[5pt]
&\to \sum_{k=1}^K \Bigl( \bigl(k/K\bigr)^{1-\beta} - \bigl(
  (k-1)/K\bigr)^{1-\beta}\Bigr) \P\( (\Gamma(\beta+1))^{1/2}\sigma_f
  B(M_\beta(t-k/K)_{+})>\lambda\)\,.  \notag 
\end{align}
Combining \eqref{e:no.dep.m} with  \eqref{e:K.sum} implies that
$$
\frac{\  \hat S_{T_n^{K,t}}(f)}{\sqrt{a_n}} 
\Longrightarrow  (\Gamma(\beta+1))^{1/2}\sigma_f B(M_\beta(t-\hat
T_{\infty, K})_{+}), 
$$
where $\hat T_{\infty, K}$ is a discrete random variable independent of
$B$ and $M_\beta$ such that
$$
\P\bigl( \hat T_{\infty, K} =k/K\bigr) =  \bigl(k/K\bigr)^{1-\beta} -
\bigl( (k-1)/K\bigr)^{1-\beta}, \ k=1,\ldots, K\,.
$$
 
We claim that for every $\vep>0$
\begin{equation} \label{e:bill.1}
\lim_{K\to\infty} \limsup_{n\to\infty} \mu_{n}\( \frac{\  \sup_{0\leq
s_1,s_2\leq t, \, |s_1-s_2|\leq 1/K} \bigl| \hat S_{  ns_1 }(f)-  \hat S_{ 
  ns_2 }(f)\bigr|}{\sqrt{a_n}}>\vep \) =0.
\end{equation}
Then, since $\hat T_{\infty, K} \Rightarrow T_\infty^1$ as $K\to\infty$,
once we prove \eqref{e:bill.1}, the claim of the
theorem in the case $L=1$ will follow from Theorem
3.2 in \cite{billingsley:1999}.

To see that \eqref{e:bill.1} is true, repeat the steps in
\eqref{eq:asymptotic sum} and bound the probabilities $p_m(x,i)$
from above by 1. We conclude that  \eqref{e:bill.1}
is bounded from above by
$$
\lim_{K \to \infty} \sum_{i=1}^q \limsup_{n\to\infty} \P_{\aaa_i} \( \frac{\  \sup_{0\leq
s_1,s_2\leq t, \, |s_1-s_2|\leq 1/K} \bigl| S_{  ns_1 }(f)-  S_{ 
  ns_2 }(f)\bigr|}{\sqrt{a_n}}>\vep \)
$$
$$
=\lim_{K \to \infty} q\,  \P \( \sup_{0\leq
s_1,s_2\leq t, \, |s_1-s_2|\leq 1/K}   (\Gamma(\beta+1))^{1/2}\sigma_f
\bigl|B(\M(s_1))-B(\M(s_2))\bigr|>\vep\)\,,
$$
where at the second step we used Theorem \ref{thm:thm1}. Now
\eqref{e:bill.1} follows from the sample continuity of the process $\(
B(\M(t)) , t\ge 0\)$.

This proves the required convergence for $L=1$. For general $L$ we
replace $n$ by $nL$, $t$ by $t/L$ and use the regular variation of
$(a_n)$. Using the already considered case $L=1$ we see that
$$
\mu_{nL}\( \frac{\  \hat S_{ 
    nt }(f)}{\sqrt{a_n}}>\lambda \) \to \P\(
(\Gamma(\beta+1))^{1/2}\sigma_f L^{\beta/2} 
B(M_\beta(t/L-T_\infty^{1})_{+}) >\lambda \)\,.
$$
Since $L T_\infty^{1}\eid T_\infty^{L}$ and the process $B(M_\beta)$
is $\beta/2$-self-similar, the claim of the theorem in the case
$0<\beta\leq 1$ has been established.

In the case $\beta=0$  and $L=1$, we proceed as in
\eqref{eq:asymptotic sum}, but 
stop before breaking the sum into $K$ parts. Consider the case
$\lambda\geq 0$; the case $\lambda<0$ can be handled in a similar
manner.  {Fix $t>0$ and choose $\vep>0$ smaller than $t$. Next, }split the sum over $m$ into two
sums; the first over the range $m\leq  n(t-\vep)$, and the second over the
range $n(t-\vep)<m\leq nt$.  {Denote the first sum
$$\Sigma_{n,1}(\lambda):= \sum_{m \leq n(t-\vep)} \int_{\X} \frac{\P_x(\tau_{{D}}=m)}{\mu(\tau_{{D}}\le n)}
\sum_{i=1}^q p_m(x,i)\P_{\aaa_i}\(  \frac{
  S_{(nt -m)_+}(f)}{\sqrt{a_n}}>\lambda \)  \pi(dx)$$
 and denote the second sum, over the
range $n(t-\vep)<m\leq nt$, by  $\Sigma_{n,2}(\lambda)$.}
 Let $0<\rho<1$. By the slow
variation of the sequence $(a_n)$ there is $n_\rho$ such that for all
$n>n_\rho$ and for all $m\leq  n(t-\vep)$, $a_{nt-m}/a_n \in (1-\rho,
1+\rho)$. By Theorem
\ref{thm:thm1} there is $\hat n_\rho$ such that for all $n>\hat n_\rho$, 
$$
\frac{\P_{\aaa_i}\(  \frac{
  S_{n}(f)}{\sqrt{a_n}}>(1\pm \rho)^{-1/2}\lambda \)}{\P\(
 \sigma_f  
B(E_{\rm st}) >(1\pm \rho)^{-1/2}\lambda \)}\in (1-\rho, 1+\rho)\,,
$$
for each $i=1,\ldots, q$, 
where $E_{\rm st}$ is a standard exponential random variable independent of the
Brownian motion. For notational simplicity, we identify $n_\rho$ and
$\hat n_\rho$. We see that for $n> n_\rho$,
$$
(1-\rho)\frac{\mu(\tau_D\leq n(t-\vep))}{\mu(\tau_D\leq n)}
\P\( \sigma_f  B(E_{\rm st}) >(1+ \rho)^{-1/2}\lambda \) \leq \Sigma_{n,1}(\lambda)
$$
$$
\leq (1+\rho)\frac{\mu(\tau_D\leq n(t-\vep))}{\mu(\tau_D\leq n)}
\P\( \sigma_f  B(E_{\rm st}) >(1-\rho)^{-1/2}\lambda \) \,.
$$
Furthermore,
$$
 {\Sigma_{n,2}(\lambda) \leq \frac{\mu(n(t-\vep) <  \tau_D\leq
  nt)}{\mu(\tau_D\leq n)}\,.}
$$
Letting first $n\to\infty$, then $\vep\to 0$, and, finally, $\rho\to
0$, we conclude, by the continuity of the law of $B(E_{\rm st})$ that
$$
\mu_{n}\( \frac{\  \hat S_{nt}(f)}{\sqrt{a_n}}>\lambda \) \to 
t \P\( \sigma_f  B(E_{\rm st}) > \lambda \) \,,
$$
which is the required limit in the case $\beta=0$ and $L=1$. The
extension to the case of a general $L>0$ is the same as in the case
$0<\beta\leq 1$. 

It remains to prove tightness in the case $0<\beta\leq 1$. We will
prove tightness in $\CC[0,1]$. Since we
are dealing with a sequence of processes starting at zero, it is
enough to show that for any $\vep>0$ there is $\delta>0$ such that for
any $n=1,2,\ldots$,
\begin{equation} \label{e:tight.1}
\mu_n\left( \sup_{0\leq s,t\leq 1, \, |t-s|\leq \delta} \frac{1}{\sqrt{a_n}} 
\bigl|\hat S_{nt}(f)- \hat S_{ns}(f)\bigr|>\vep\right)\leq \vep\,.
\end{equation}
However, by the tightness part of Theorem \ref{thm:thm1}, we can
choose $\delta>0$ such that for every $i=1,\ldots, q$ and
$n=1,2,\ldots$, 
$$
\P_{\aaa_i}\left( \sup_{0\leq s,t\leq 1, \, |t-s|\leq \delta}
  \frac{1}{\sqrt{a_n}}  
\bigl|S_{nt}(f)-  S_{ns}(f)\bigr|>\vep\right)\leq \vep\,.
$$
Therefore, arguing as in \eqref{eq:asymptotic sum}, we obtain 
$$
\mu_n\left( \sup_{0\leq s,t\leq 1, \, |t-s|\leq \delta} \frac{1}{\sqrt{a_n}} 
\bigl|\hat S_{nt}(f)- \hat S_{ns}(f)\bigr|>\vep\right)
$$
$$
=\sum_{m=0}^{n} \int_{\X} \frac{\P_x(\tau_{{D}}=m)}{\mu(\tau_{{D}}\le
  n)} \sum_{i=1}^q p_m(x,i) \P_{\aaa_i}\left( \sup_{0\leq s,t\leq 1, \, |t-s|\leq \delta} \frac{1}{\sqrt{a_n}} 
\bigl| S_{(nt-m)_+}(f)-  S_{(ns-m)_+}(f)\bigr|>\vep\right)
  \pi(dx) 
$$
$$
\leq \sum_{m=0}^{n} \int_{\X} \frac{\P_x(\tau_{{D}}=m)}{\mu(\tau_{{D}}\le
  n)} \sum_{i=1}^q p_m(x,i) \P_{\aaa_i}\left( \sup_{0\leq s,t\leq 1, \, |t-s|\leq \delta} \frac{1}{\sqrt{a_n}} 
\bigl| S_{nt }(f)-  S_{ns}(f)\bigr|>\vep\right)
  \pi(dx) \leq \vep\,,
$$
proving \eqref{e:tight.1}. 

Finally, we prove \eqref{e:miss.lemma}. We have 
\begin{align}  \label{dual expression}
\int_E \(\frac{\hat S_n(f)}{\sqrt{a_n}}\)^2\, d\mu_n
&= \frac{1}{a_n\mu(\tau_{{D}}\le n)}\int_E (\hat S_n(f))^2\, d\mu\nn\\
&=  \frac{1}{a_n\mu(\tau_{{D}}\le n)}\[ n\int_{\X} f^2(x)\,
\pi(dx) +2\sum_{j=1}^{n-1} 
\sum_{k=1}^{n-j} \int_\X f(x) P^{k} f(x)\, \pi(dx)\]\,,
\end{align}
where in the first step we used that $f$ is supported by $D$, and in the second step we used the invariance of  {the} measure $\pi$.
By Lemma \ref{l:RV.wander} and  conditions
\eqref{e:cond.1} and \eqref{e:cond.2},
 the  supremum over $n\ge 1$ of the right side of
\eqref{dual expression} is finite.
\end{proof}

%

\section{A mean-zero functional CLT for heavy-tailed infinitely
  divisible processes} \label{sec:FCLT}

We now define precisely the class of \id\ stochastic processes $\BX=(X_1,X_2,\ldots)$
for which we will prove a functional central limit theorem. Those
processes are given in the form \eqref{e:the.process} of a stochastic
integral. 

Let $( E, \EE)$ be the path space of a Markov chain on
$\X$, as in Section \ref{sec:markov}. Let $f:\, \X\to\bbr$ be a
measurable function satisfying \eqref{e:cond.1} and
\eqref{e:cond.2}. We will assume that $f$ is supported by a finite
union of atoms \eqref{e:union.atoms}. Let $h(\bx) := f(x_1)$, $\bx=(x_1,x_2,\dots) \in
E$ be the extension of the function $f$ to the path space $E$ defined
above. 

Let $M$ be a homogeneous
symmetric \id\ random measure $M$ on $(E,\mathcal{E})$ with control
measure $\mu$ given by \eqref{def:mu}. We will assume that the local
L\'evy measure $\rho$ of $M$ has a regularly varying tail with 
index $-\alpha$, $0 < \alpha < 2$:
\begin{equation} \label{eq:regular var}
\rho(\cdot,\infty) \in RV_{-\alpha} \ \ \text{at infinity.}
\end{equation}
Let  
\begin{align}\label{def:eta}
X_k = \int_{E} h\circ T^k(\bx) \, dM(\bx) = \int_{E} f(x_k) \,
  dM(\bx),  \ \ k=1,2,\dots\,,
\end{align}
where $T$ is the left shift on the path space $E$. Since the function
$f$ is supported by a set of a finite measure, it is
straightforward to check that the integrability condition
\eqref{e:integrability} is satisfied, so \eqref{def:eta} presents a well defined
stationary symmetric \id\ process. Furthermore, we have 
$$
 {\P}(X_1>\lambda)\sim  \int_\X |f(x)|^\alpha\, \pi(dx)\,
\rho(\lambda,\infty), \ \ \lambda\to\infty\,,
$$
see e.g. \cite{rosinski:samorodnitsky:1993}. That is, the heaviness of
the marginal  tail of the process $\BX$ is determined by the exponent
$\alpha$ of regular variation  in \eqref{eq:regular var}. On the other
hand, we will assume that the underlying Markov chain is
$\beta$-regular, $0\leq \beta\leq 1$, and we will see that 
the parameter $\beta$ determines the length of memory in the
process $\BX$. 

The main result of this work is the following theorem. Its statement
uses the tail constant $C_\alpha$ of an $\alpha$-stable random
variable; see \cite{samorodnitsky:taqqu:1994}. We also use the inverse
of the tail of the  local L\'evy measure defined by
$$
\rhoinv(y) := \inf\bigl\{ x\geq 0:\, \rho(x,\infty) \leq
 y\bigr\}, \, y>0\,.
$$ 

\begin{thm}  \label{t:main.fclt}
Let $0<\alpha<2$ and $0 \leq \beta \leq 1$.
Suppose that $(Z_n)$ is a $\beta$-regular Harris
chain on $(\X,\XX)$ with an invariant $\sigma$-finite measure
$\pi$. If $\beta=1$, assume that $a_n=o(n)$. Let $f$ be a measurable 
function supported on a finite union of atoms
$D=\cup_{i=1}^q\aaa_i \in \XX^+_0$.  We assume that $f$ satisfies  
\eqref{e:cond.1} and \eqref{e:cond.2}. If $\beta=1$,
we assume also that $f\in L^{2+\vep}(\pi)$ for some $\vep>0$. If
$\alpha\geq 1$, we also assume that for some $\vep>0$,
$\E_{\alpha_1}|f(Z_{\tau_{\alpha_2}})|^{2+\vep}<\infty$ for any two
atoms, $\alpha_1,\alpha_2$, constituting $D$. 

Let $\BX=(X_1,X_2,\ldots)$ be a stationary symmetric \id\ stochastic
process defined in \eqref{def:eta}, where the local L\'evy measure of
the symmetric homogeneous
infinitely divisible random measure $M$ is assumed to satisfy
\eqref{eq:regular var}.  We assume, furthermore, that 
\begin{equation}  \label{e:lower.tail}
x^{p_0} \rho (x,\infty) \to 0 \ \ \text{as } x \downarrow 0
\end{equation}
for some $p_0 \in (0,2)$. Then the sequence  
\begin{equation*} \label{e:cn}
c_n = C_{\alpha}^{-1/\alpha} a_n^{1/2} \rhoinv \bigl( \mu(\,
\tau_{D} \leq n)^{-1} \bigr), \ \ n=1,2,\ldots\,,
\end{equation*}    
satisfies 
\be\label{cn:rv}
c_n  \in RV_{\beta/2 + (1-\beta)/\alpha}\,.
\ee

Let $0<\beta\leq 1$. Then 
\begin{equation} \label{e:main.st}
\frac{1}{c_n}\sum_{k=1}^{n\cdot} X_k \Rightarrow \bigl(
\Gamma(\beta+1)\bigr)^{1/2}\sigma_f
Y_{\alpha,\beta,2}(\cdot) \quad\text{in }\CC[0,\infty)\,, 
\end{equation} 
where $(Y_{\alpha,\beta,2}(t))$ is the process in \eqref{def:Y process}, with the
usual understanding that the sum in the left hand side is defined  {by
 linear} interpolation. 

If $\beta=0$, then \eqref{e:main.st} holds in the sense of convergence
of finite-dimensional distributions. 
\end{thm}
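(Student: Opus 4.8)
The plan is to reduce the finite-dimensional statement to convergence of L\'evy exponents and then treat tightness separately. Writing $\hat S_m(f)(\bx)=\sum_{k=1}^m f(x_k)$ as in Section~\ref{sec:markov}, definition \eqref{def:eta} gives $\sum_{k=1}^{\lfloor nt\rfloor}X_k=\int_E \hat S_{\lfloor nt\rfloor}(f)\,dM$, so for fixed $t_1,\dots,t_d>0$ and $\theta_1,\dots,\theta_d\in\R$ the joint characteristic function of $\bigl(c_n^{-1}\sum_{k\le nt_j}X_k\bigr)_{j}$, evaluated at $(\theta_1,\dots,\theta_d)$, equals $\exp\{-\Psi_n\}$ with
\begin{equation*}
\Psi_n=\int_E\int_\R\Bigl(1-\cos\Bigl(\frac{x}{c_n}\sum_{j=1}^d\theta_j\hat S_{\lfloor nt_j\rfloor}(f)(\bx)\Bigr)\Bigr)\,\rho(dx)\,\mu(d\bx).
\end{equation*}
Finite-dimensional convergence is then equivalent to $\Psi_n$ converging to the L\'evy exponent of $(\Gamma(\bb+1))^{1/2}\sigma_f Y_{\alpha,\bb,2}$, which is read off from the proof of Proposition~\ref{pr:new.pr} with $\gamma=2$, so that $S_\gamma$ is a Brownian motion. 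I would first dispose of \eqref{cn:rv}: since $a_n\in RV_\bb$, $\mu(\tau_D\le n)\in RV_{1-\bb}$ by Lemma~\ref{l:RV.wander}, and $\rhoinv\in RV_{-1/\alpha}$ by \eqref{eq:regular var}, composing gives $c_n\in RV_{\bb/2+(1-\bb)/\alpha}$, which matches the exponent \eqref{eq:H} at $\gamma=2$.

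The two scales in $\Psi_n$ decouple. Because $f$ is supported on $D$, the integrand vanishes unless the path enters $D$, so I would organize the outer integral by the first entry time $\tau_D=m$. Setting $b_n:=\rhoinv(\mu(\tau_D\le n)^{-1})$, so that $\rho(b_n,\infty)\sim\mu(\tau_D\le n)^{-1}$, the regular variation of $\rho$ in \eqref{eq:regular var} gives, for a fixed scalar $u$, $\int_\R\bigl(1-\cos(\tfrac{\sqrt{a_n}}{c_n}xu)\bigr)\rho(dx)\sim \mathrm{const}\cdot|u|^\alpha\,\rho(b_n,\infty)$, and the constant $C_\alpha$ in $c_n$ is chosen exactly so that this constant is $1$. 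With $g_n:=a_n^{-1/2}\sum_j\theta_j\hat S_{\lfloor nt_j\rfloor}(f)$ this reduces $\Psi_n$ to $\mu(\tau_D\le n)^{-1}\int_E|g_n|^\alpha\,d\mu$ up to lower-order terms. Now Lemma~\ref{l:RV.wander} converts the $\mu$-mass of entry times into the limiting measure: $\mu(\tau_D\le\lfloor ns\rfloor)\sim\mu(\tau_D\le n)s^{1-\bb}$, so the $\mu$-mass of paths first entering $D$ in $(ns,n(s+ds)]$ is $\sim\mu(\tau_D\le n)\,\nu_\bb(ds)$ and the factor $\mu(\tau_D\le n)$ cancels, leaving the \emph{infinite} entry-time measure $\nu_\bb$ on $[0,\infty)$ --- exactly the $[0,\infty)$-component of the control measure of $Z_{\alpha,\bb}$. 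Conditionally on first entry at time $\approx ns$, by the strong Markov property $\hat S_{\lfloor nt\rfloor}(f)$ has the law of $S_{(\lfloor nt\rfloor-m)_+}(f)$ started at an atom, which by Theorem~\ref{cor:entrance time} (taking $L\ge\max_j t_j$) rescales to $(\Gamma(\bb+1))^{1/2}\sigma_f B(\M((t-s)_+))$. Assembling these two inputs, $\mu(\tau_D\le n)^{-1}\int_E|g_n|^\alpha\,d\mu$ converges to
\begin{equation*}
\sigma_f^\alpha\bigl(\Gamma(\bb+1)\bigr)^{\alpha/2}\int_0^\infty\E'\Bigl|\sum_{j=1}^d\theta_j\,B\bigl(\M((t_j-s)_+)\bigr)\Bigr|^\alpha\,\nu_\bb(ds),
\end{equation*}
which is precisely the L\'evy exponent of $(\Gamma(\bb+1))^{1/2}\sigma_f Y_{\alpha,\bb,2}$, giving convergence of finite-dimensional distributions.

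The analytic point requiring care is to upgrade the weak convergence of $g_n$ supplied by Theorem~\ref{cor:entrance time} to convergence of its $\alpha$-th moment, and to pass this through the non-uniform regular-variation asymptotic for $\rho$. For this I would invoke uniform integrability: the bound \eqref{e:miss.lemma} controls $\int_E|g_n|^2\,d\mu_n$ uniformly in $n$, which makes $|g_n|^\alpha$ uniformly integrable for $\alpha<2$; the supplementary hypotheses ($f\in L^{2+\vep}$ when $\bb=1$, and a finite $(2+\vep)$-moment of $f(Z_{\tau_{\aaa_2}})$ when $\alpha\ge1$) feed Remark~\ref{rk:higher.mom} to supply the higher-moment control needed in the regimes where $L^2$ alone does not suffice. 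A truncation of $|g_n|$ at a large level, removed at the end, makes the regular-variation asymptotic uniform over the retained range, and the contribution of small jumps is negligible by \eqref{e:lower.tail}, since there $1-\cos$ is of order $x^2/c_n^2$ and $x^{p_0}\rho(x,\infty)\to0$ forces the resulting term to vanish after scaling. In the boundary case $\bb=0$ only finite-dimensional convergence is asserted, the entry-time law is uniform and $B(\M(\cdot))$ collapses to $\sigma_f B(E_{\rm st})$ as in Theorem~\ref{cor:entrance time}, recovering the $\alpha$-stable L\'evy motion of Remark~\ref{rk:boundary}.

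It remains, for $0<\bb\le1$, to prove tightness in $\CC[0,\infty)$; this I expect to be the main obstacle. The limit $Y_{\alpha,\bb,2}$ has continuous paths, so it suffices to control the modulus of continuity of $c_n^{-1}\sum_{k\le n\cdot}X_k$ on each $[0,L]$. The difficulty is that $\alpha<2$ makes the limit infinite-variance, so the increment estimates cannot use second moments of the whole process and must instead combine fractional-moment bounds (of order below $\alpha$) with the excursion-level $L^2$ (and, where needed, $L^{2+\vep}$) controls of Remark~\ref{rk:l2} and \eqref{e:miss.lemma}, applied conditionally on the jump configuration. Concretely I would follow the tightness scheme of the corresponding result in \cite{owada:samorodnitsky:2015}, splitting $\int_E(\cdot)\,dM$ into a part carried by finitely many large jumps --- tight because it is a finite sum of $B(\M(\cdot))$-type continuous pieces delivered by Theorem~\ref{thm:thm1} --- and a remainder made uniformly small in probability by the moment bounds. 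Finite-dimensional convergence together with this tightness yields \eqref{e:main.st} in $\CC[0,\infty)$.
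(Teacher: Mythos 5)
Your finite-dimensional argument is essentially the paper's: the paper phrases the reduction through Kallenberg's criterion for convergence of \id\ laws (a truncated second-moment condition and a L\'evy-tail condition) rather than through direct convergence of the exponent $\Psi_n$, but the substance is the same --- Theorem \ref{cor:entrance time} under the diffuse initial laws $\mu_{nL}$, a Skorohod embedding, Karamata/Potter bounds to handle the non-uniformity of the regular-variation asymptotics, and a Pratt (generalized dominated convergence) step whose domination comes exactly from the uniform $L^2(\mu_n)$ bound \eqref{e:miss.lemma}. Your identification of the entry-time measure $\nu_\bb$ and of the limiting L\'evy exponent is correct, as is the derivation of \eqref{cn:rv}.

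The genuine gap is in the tightness argument. Moments of the full increment of order below $\alpha$, which is what you propose to use, cannot close a Billingsley-type criterion: the increment $\sum_{ns<k\le nt}X_k$ is of order $c_{n(t-s)}\sim c_n(t-s)^H$ with $H=\bb/2+(1-\bb)/\alpha\le 1/\alpha$, so a Markov bound with a $p$-th moment yields a modulus exponent $pH\le p/\alpha$, which exceeds $1$ only when $p>\alpha$. The jump-size decomposition is therefore not a convenience but the device that restores higher moments, and each piece then needs its own quantitative estimate that your sketch omits: (i) for the small-jump part ($|x|\le 1$) the paper runs Billingsley's moment criterion with an exponent $p_0$ taken \emph{close to} $2$ (this is where \eqref{e:lower.tail} enters) and a delicate choice of auxiliary exponents to force the modulus exponent strictly above $1$; (ii) for the large-jump part with $1\le\alpha<2$ a further truncation at level $Kc_n/\sqrt{a_n}$ is needed, and the truncated piece is controlled by the $(2+\vep)$-moment bound of Proposition \ref{p:symm.mom} --- the appendix estimate on fractional moments of \id\ variables, which the paper proves from scratch and which your plan never invokes; (iii) for $0<\alpha<1$ the paper instead uses the series representation \eqref{e:split.x1}, with the first $K$ terms tight by Theorem \ref{cor:entrance time} and the tail controlled via Potter's bounds, Cauchy--Schwarz and Remark \ref{rk:l2}. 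Your overall shape (finitely many large jumps tight, remainder controlled by moments) is right, but as stated the steps that actually produce a Billingsley exponent $\gamma>1$ are missing, and the stated tool (moments of order below $\alpha$) would fail.
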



\begin{proof}
The fact that \eqref{cn:rv} holds follows from the assumption of 
$\beta$-regularity  {and} Lemma \ref{l:RV.wander}, taking into account that 
 the regular variation of $\rho$ at infinity implies $\rhoinv \in
RV_{-1/\alpha}$ at zero. For later use we also record now that
\begin{equation}  \label{e:rho.wandering}
\rho (c_na_n^{-1/2}, \infty) \sim C_{\alpha} \,\mu(\tau_{{D}} \leq
n)^{-1} \ \ \text{as $n\to\infty$,}
\end{equation}
which follows directly from the definition of the inverse and the
regular variation of the tail of $\rho$ in \eqref{eq:regular var}.

We start with proving convergence of the finite-dimensional
distributions. It is enough to show that
$$
\frac{1}{c_n} \sum_{j=1}^J \theta_j \sum_{k=1}^{  nt_j }
X_k \Rightarrow  \bigl( \Gamma(\beta+1)\bigr)^{1/2}\sigma_f 
\sum_{j=1}^J \theta_j Y_{\alpha, \beta, 2}(t_j)
$$
for all $J \geq 1$, $0 \leq t_1 < \dots < t_J$, and  {$\theta_1, \dots\,,
\theta_J \in \bbr$.}  We use an argument similar to that in \cite
{owada:samorodnitsky:2015}.

The standard theory of convergence in law
of infinitely divisible  random variables (e.g., Theorem 15.14 in
\cite{kallenberg:2002}), says that we only have to check the
following: in the notation of Theorem \ref{cor:entrance time} and of
Section \ref{sec:limits}, for every $r>0$,
\begin{align}
&\int_{E} \left( \frac{1}{c_n} \sum_{j=1}^J \theta_j \hat S_{ nt_j
  }(f)  \right)^2 \ \ 
\int\limits_0^{rc_n |\sum \theta_j \hat S_{ nt_j }(f)|^{-1}}
  \hspace{-10pt} v \rho(v,\infty) \, dv\, d\mu \label{e:first.cond} \\[5pt]
&\to \frac{r^{2-\alpha} C_\alpha}{2-\alpha} \, 
\bigl( \Gamma(\beta+1)\bigr)^{\alpha/2}\sigma_f^\alpha
  \int_{[0,\infty)} \int_{\Omega^{\prime}} \left| \sum_{j=1}^J
  \theta_j B\Bigl( M_{\beta}\bigl( (t_j-x)_+, \omega^{\prime} \bigr),
  \omega^{\prime} \Bigr) \right|^{\alpha}
  \P^{\prime}(d\omega^{\prime})\,  \nu_\beta(dx)  \notag
\end{align}
and
\begin{align}
&\int_{E} \rho \biggl( rc_n \biggl| \sum_{j=1}^J \theta_j \hat S_{
  nt_j  } (f)\biggr|^{-1}, \infty \biggr) \, d\mu \label{e:omit}
 \\[5pt]
&\to r^{-\alpha} C_\alpha\, \bigl( \Gamma(\beta+1)\bigr)^{\alpha/2} \sigma_f^\alpha
  \int_{[0,\infty)} \int_{\Omega^{\prime}} \left|
 \sum_{j=1}^J \theta_j B\Bigl( M_{\beta}\bigl( (t_j-x)_+,
\omega^{\prime} \bigr), \omega^{\prime} \Bigr) \right|^{\alpha} \P^{\prime}(d\omega^{\prime})
\, \nu_\beta(dx).  \notag
\end{align}
The proof of \eqref{e:omit} is very similar to that of
\eqref{e:first.cond}, so we only prove \eqref{e:first.cond}.

We keep $r>0$ fixed for the duration of the argument. Fix also an
integer $L$ so that $t_J \leq  L$ and define
$$
\psi(y) := y^{-2} \int_0^{ry} x\rho(x,\infty) dx, \ \ \ y > 0\,,
$$
so that the left-hand side in \eqref{e:first.cond} can be expressed as
$$
\int_{E} \psi \left( \frac{c_n}{|\sum_{j=1}^J \theta_j
    \hat S_{ nt_j  }(f)|} \right) d\mu\,.
$$
By  Theorem \ref{cor:entrance time} and the  Skorohod embedding
theorem, there
exists a probability space $(\Omega^{*}, \mathcal{F}^{*}, \P^*)$ and
random variables $Y, Y_1,  Y_2, \dots$ defined on $(\Omega^{*}, \mathcal{F}^{*}, \P^*)$ such that
\begin{align*}
\P^* \circ Y_n^{-1} &= \mu_{nL} \circ \left( \frac{1}{\sqrt{a_n}}
  \sum_{j=1}^J \theta_j \hat S_{ nt_j }(f) \right)^{-1}, \ \
\ n=1,2,\dots\,,  \\[5pt]
\P^* \circ Y^{-1} &= \P^{\prime} \circ \left(  \bigl(
                    \Gamma(\beta+1)\bigr)^{1/2} \sigma_f
                    \sum_{j=1}^J 
  \theta_j B\bigl( M_{\beta}(t_j-T_{\infty}^{L})_+ \bigr)
\right)^{-1} \,, \\[5pt]
Y_n &\to Y, \ \ \ \P^*\text{-a.s.}
\end{align*}
Then 
$$
\int_{E} \psi \left( \frac{c_n}{|\sum_{j=1}^J \theta_j
    \hat S_{ nt_j }(f)|} \right) d\mu = \int_{\Omega^*}
\mu(\tau_{{D}} \leq nL) \, \psi \left( \frac{c_n}{\sqrt{a_n} |Y_n|} \right)
d\P^*.
$$
First, we will establish convergence of the quantity inside the
integral. By Karamata's theorem  (see e.g. Theorem 0.6 in
\cite{resnick:1987}),
\begin{equation} \label{e:psi.rho}
\psi(y) \sim \frac{r^{2-\alpha}}{2-\alpha} \, \rho(y,\infty) \quad \text{as }
 y \to \infty.
\end{equation} 
Therefore, as $n \to \infty$,
\begin{align*}
\mu(\tau_{{D}} \leq nL) \, \psi \left( \frac{c_n}{\sqrt{a_n} |Y_n|} \right)
&\sim \frac{r^{2-\alpha}}{2-\alpha}\,  \mu(\tau_{{D}} \leq nL) \, \rho
\bigl( c_n a_n^{-1/2} |Y_n|^{-1}, \, \infty  \bigr) \\[5pt]
&\sim \frac{r^{2-\alpha}}{2-\alpha}\, |Y_n|^{\alpha} \, \mu(\tau_{{D}}
  \leq nL) \, 
\rho \bigl( c_n a_n^{-1/2}, \, \infty \bigr), \ \ \ \P^*\text{-a.s.},
\end{align*}
where the last line follows from the uniform convergence of regularly
varying functions of negative index; see e.g. Proposition 0.5 in 
\cite{resnick:1987}.
By \eqref{e:rho.wandering} and the regular variation of the wandering
rate in Lemma \ref{l:RV.wander}, we conclude that 
$$
\mu(\tau_{{D}} \leq nL) \, \psi \left( \frac{c_n}{\sqrt{a_n} |Y_n|}
\right) \to \frac{r^{2-\alpha}}{2-\alpha}\, C_{\alpha}\,  L^{1-\beta}
|Y|^{\alpha}, \ \ \ \P^*\text{-a.s.}
$$

It is straightforward to check that
$$
\int_{\Omega^*} L^{1-\beta} |Y|^{\alpha} d\P^* = \bigl(
                    \Gamma(\beta+1)\bigr)^{\alpha/2}\sigma_f^\alpha
                    \int_{[0,\infty)} \int_{\Omega^{\prime}} \left|
                      \sum_{j=1}^J \theta_j B\Bigl( M_{\beta}\bigl(
                      (t_j-x)_+, \omega^{\prime} \bigr),
                      \omega^{\prime} \Bigr) \right|^{\alpha}
                    \P^{\prime}(d\omega^{\prime}) \nu_\beta(dx), 
$$
so it now remains to show that the convergence discussed so far can be
taken under the integral sign. For this, we will use the 
Pratt lemma (see  Exercise 5.4.2.4 in \cite{resnick:1987}). The lemma requires
us to find a sequence of  measurable functions $G_0, G_1, G_2,\dots$
defined on  $(\Omega^*,\mathcal{F}^*,\P^*)$ such that
\begin{align}
 \mu\bigl(\tau_{{D}} \leq nL\bigr) \, \psi \left( \frac{c_n}{\sqrt{a_n}|Y_n|} \right) &\leq G_n \quad \P^*\text{-a.s.}, \label{e:1stPratt} \\[5pt]
G_n &\to G_0 \quad \P^*\text{-a.s.}, \label{e:2ndPratt} \\[5pt]
\E^* G_n   &\to \E^* G_0  \in [0,\infty) \label{e:3rdPratt}.
\end{align}
Throughout the rest of the proof $C$ is a positive constant which may change from line to line.
Note that by \eqref{e:rho.wandering}, 
$\mu(\tau_{{D}} \leq nL) \, \psi(c_na_n^{-1/2})$ tends to a positive finite constant, therefore
$$
\mu\bigl(\tau_{{D}} \leq nL\bigr) \, \psi \left(
  \frac{c_n}{\sqrt{a_n}|Y_n|} \right) \leq C \, \frac{\psi  \bigl(c_n
  a_n^{-1/2} |Y_n|^{-1}\bigr)}{\psi  \bigl(c_n a_n^{-1/2}\bigr)}.
$$
Since $\psi \in RV_{-\alpha}$ at infinity,  Potter's bounds (see
Proposition 0.8 in \cite{resnick:1987}) allow us to write,  
for $0 < \xi < 2-\alpha$:
$$
\frac{\psi \bigl(c_n a_n^{-1/2} |Y_n|^{-1} \bigr)}{\psi
  \bigl(c_n a_n^{-1/2} \bigr)} \ \one \bigl\{ c_n \geq \sqrt{a_n}
|Y_n| \bigr\} \leq C \, \bigl( |Y_n|^{\alpha-\xi} + |Y_n|^{\alpha+\xi}
\bigr) 
$$
for sufficiently large $n$. Further, by \eqref{e:lower.tail},   $y^2
\psi(y) \to 0$ as $y \downarrow 0$, which gives us 
$$
\psi(y) \leq C \, y^{-2} \ \ \text{for all } y \in [0,1].
$$
Thus,
$$
\frac{\psi \bigl(c_n a_n^{-1/2} |Y_n|^{-1}\bigr)}{\psi \bigl(c_n
  a_n^{-1/2}\bigr)} \ \one \bigl\{ c_n < \sqrt{a_n} |Y_n| \bigr\} \leq
C  a_nc_n^{-2} 
\frac{|Y_n|^2}{\psi(c_n a_n^{-1/2})} \,.
$$
Summarizing, for sufficiently large $n$, 
$$
\mu\bigl(\tau_D \leq nL\bigr) \, \psi \left(
  \frac{c_n}{\sqrt{a_n}|Y_n|} \right) \leq C \left(|Y_n|^{\alpha-\xi}
  + |Y_n|^{\alpha+\xi} +   a_nc_n^{-2}  \frac{|Y_n|^2}{\psi(c_n a_n^{-1/2})} \right).
$$
If we we define $G_n$ to be the right-hand side of the above, then 
 \eqref{e:1stPratt} is automatic. 

Let $G_0 := C \bigl( |Y|^{\alpha-\xi} + |Y|^{\alpha+\xi} \bigr)$. It
follows by the definition of $c_n$ and Lemma \ref{l:RV.wander} that 
$$
c_na_n^{-1/2} \geq C \rhoinv\bigl( a_n/n\bigr) \to \infty \ \ \text{as
  $n\to\infty$}
$$
because $a_n/n\to 0$ (this follows  {from regular} variation considerations
if $\beta<1$, and it is assumed to hold if $\beta=1$.) Since
$y^2\psi(y)\to\infty$ as $y\to\infty$ by \eqref{e:psi.rho} and the
fact that $\alpha<2$, we conclude that 
$$
 a_nc_n^{-2}  \frac{|Y_n|^2}{\psi(c_n a_n^{-1/2})}\to 0
$$
$\P^*\text{-a.s.}$, so that  \eqref{e:2ndPratt} holds.  

To show \eqref{e:3rdPratt}, recall  that by Theorem \ref{cor:entrance
  time}, 
$\sup_{n \geq 1}\E^* |Y_n|^2 < \infty$. This implies uniform
integrability of $(|Y_n|^{\alpha \pm \xi}, \, n \geq 1)$ (with respect
to $\P^*$). Combining these observations, 
\begin{align*}
\E^* G_n &= C \left(\E^*|Y_n|^{\alpha-\xi} + \E^*|Y_n|^{\alpha+\xi} +
           a_nc_n^{-2} \frac{\E^*|Y_n|^2}{\psi(c_n a_n^{-1/2})}
           \right) \\[5pt] 
&\to C \bigl( \E^*|Y|^{\alpha-\xi} + \E^*|Y|^{\alpha+\xi} \bigr)=\E^* G_0\,, \ \ \ n \to \infty\,,
\end{align*}
as required. This completes the proof of convergence in finite-dimensional distributions.

It remains to prove tightness in the case $0<\beta\leq 1$. We start by
decomposing the process
$\BX$ according to the magnitude of the L\'evy jumps. Denote 
\begin{align*}
\rho_1 (\cdot) &:= \rho \bigl( \cdot \cap \{ x: |x| > 1 \} \bigr)\,, \\[5pt]
\rho_2 (\cdot) &:= \rho \bigl( \cdot \cap \{ x: |x| \leq 1 \} \bigr)\,,
\end{align*}
and let $M_i$, $i=1,2$ denote independent homogeneous symmetric infinitely
divisible random measures, with the same control measure $\mu$ as $M$, and
local L\'evy measures $\rho_i$, $i=1,2$. Then 
\begin{align*}
\bigl( X_k, \, k=1,2,\ldots\bigr) &\stackrel{d}{=} \left( \int_{E}
  f(x_k) \, dM_1(\bx) + \int_{E} f(x_k)\,  dM_2(\bx),
  \, k=1,2,\ldots\right)  \\
& :=  \bigl( X_k^{(1)} + X_k^{(2)}, \, k=1,2,\ldots\bigr) 
\end{align*}
in the sense of equality of finite-dimensional distributions. 
Notice that $\BX^{(1)}$ and $\BX^{(2)}$ are independent. Furthermore,
since $f\in L^2(\pi)$, we see that
\begin{equation} \label{e:l2.2}
\E (X_k^{(2)})^2 = \int_\X f^2(x)\, \pi (dx)\,  {\int_{-1}^1 y^2}\,
\rho(dy)<\infty\,.
\end{equation} 

Fix $L>0$. 
We will begin with proving  tightness of the normalized partial sums
of  $X_k^{(2)}$ in the space $\mathcal{C}[0,L]$. 
By Theorem 12.3 of \cite{billingsley:1968}, it suffices to show that
there exist $\gamma>1$, $\rho\geq 0$  and $C>0$ such that
\begin{equation}\label{billingsleythm}
\P \left(     
\left|\sum_{k=1}^{ nt } X_k^{(2)}  - \sum_{k=1}^{  ns
   } X_k^{(2)} \right| > \lambda c_n \right) \leq
\frac{C}{\lambda^{\rho}}(t-s)^{\gamma}  
\end{equation}
for all $0   \leq s \leq t \leq L$, $n \geq 1$ and $\lambda >
0$. 

We dispose of the case $n(t-s) < 1$ first, and, in the sequel, we will
assume that $\mu(\tau_D\leq 1)>0$. If this measure is zero, we will
simply replace 1 by a suitable large constant $\gamma$ and dispose of
the case $n(t-s) < \gamma$ first.  It follows from
\eqref{e:l2.2} that 
$$
\P \left(     
\left|\sum_{k=1}^{ nt } X_k^{(2)}  - \sum_{k=1}^{  ns
   } X_k^{(2)} \right| > \lambda c_n \right)
\leq \P \left( \max\bigl(  {|X_1^{(2)}|,\,  |X_2^{(2)}|}\bigr)>\frac{\lambda c_n}{n(t-s)}\right)
$$
$$
\leq C\lambda^{-2} c_n^{-2} n^2 (t-s)^2\,.
$$
It follows from \eqref{cn:rv} that 
$$
c_n^{-2} n^2 \in RV_{2-2(\beta/2 + (1-\beta)/\alpha)} = RV_{1-(1-\beta)(2/ \alpha-1)}\,.
$$
Suppose first that $0 < \beta < 1$. If $(1-\beta)(2/ \alpha-1)>1$, then $n^2/c_n^2$ is bounded by a positive constant, and we are done. 
In the case of $0<(1-\beta)(2/ \alpha-1) \leq 1$, since $n(t-s) < 1$, there is $0<\delta<(1-\beta)(2/ \alpha-1)$ such that 
$$
c_n^{-2} n^2 (t-s)^2 \leq C(t-s)^{1+(1-\beta)(2/ \alpha-1)  -\delta}\,,
$$
which is what is needed for
 \eqref{billingsleythm}. If $\beta=1$, a similar argument works if one
 uses the stronger integrability assumption on $f$ imposed in the
 theorem. 

Let us assume, therefore, that   $n(t-s) \geq 1$.  
By the L\'evy-It{\^o} decomposition,
$$
\sum_{k=1}^{ nt} {X_k^{(2)}} - \sum_{k=1}^{ ns  }X_k^{(2)} 
  \stackrel{d}{=}  \int_{E} \bigl( \hat S_{  nt }(f)   - \hat S_{
   ns  } (f) \bigr)\, dM_2\
$$
$$
\stackrel{d}{=} \iint\limits_{|y(  \hat S_{  nt }(f)   - \hat S_{
   ns  } (f)) |  \leq \lambda c_n} \hspace{-20pt} y \bigl( \hat S_{  nt }(f)   - \hat S_{
   ns  } (f) \bigr)  \,d\bar{N}_2 + \iint\limits_{|y(  \hat S_{  nt }(f)   - \hat S_{
   ns  } (f)) |  > \lambda c_n} 
\hspace{-20pt} y \bigl( \hat S_{  nt }(f)   - \hat S_{
   ns  } (f) \bigr) \,dN_2\,,
$$
where  $N_2$ is a Poisson random measure on $\mathbb{R} \times E$ with
mean measure $\rho_2 \times \mu$  and $\bar{N}_2 := N_2 - \bigl( \rho_2
\times \mu \bigr)$.  Therefore,
\begin{align*}
& \P \left(     
\left|\sum_{k=1}^{ nt } X_k^{(2)}  - \sum_{k=1}^{  ns
   } X_k^{(2)} \right| > \lambda c_n \right)  \notag
\\
&\leq \P\Bigl(\Big| \ \iint\limits_{|y(  \hat S_{  nt }(f)   - \hat S_{
   ns  } (f)) |  \leq \lambda c_n} 
\hspace{-20pt} y \bigl( \hat S_{  nt }(f)   - \hat S_{
   ns  } (f) \bigr)  \,d\bar{N}_2 \Big| > \lambda c_n  \Bigr) \notag \\
&+
 \P\Bigl(\Big| \ \iint\limits_{|y(  \hat S_{  nt }(f)   - \hat S_{
   ns  } (f)) |  > \lambda c_n} 
\hspace{-20pt} y \bigl( \hat S_{  nt }(f)   - \hat S_{
   ns  } (f) \bigr)  \,dN_2 \Big| > 0\Bigr)\,. \label{eq:LevyIto}
\end{align*}
It follows from \eqref{e:lower.tail} that,
\begin{align*}
&\P\Bigl(\Big| \ \iint\limits_{|y(  \hat S_{  nt }(f)   - \hat S_{
   ns  } (f)) |  \leq \lambda c_n} 
\hspace{-20pt} y \bigl( \hat S_{  nt }(f)   - \hat S_{
   ns  } (f) \bigr)  \,d\bar{N}_2 \Big| > \lambda c_n  \Bigr) \\
& \leq \frac{1}{\lambda^2 c_n^2} \E \left| \ \  
\ \iint\limits_{|y(  \hat S_{  nt }(f)   - \hat S_{
   ns  } (f)) |  \leq \lambda c_n} 
\hspace{-20pt} y \bigl( \hat S_{  nt }(f)   - \hat S_{
   ns  } (f) \bigr)  \,d\bar{N}_2 \right|^2 \\
&= \frac{1}{\lambda^2 c_n^2} \  \iint\limits_{|y(  \hat S_{  nt }(f)   - \hat S_{
   ns  } (f)) |  \leq \lambda c_n} 
\hspace{-20pt} \bigl[ y \bigl( \hat S_{  nt }(f)   - \hat S_{
   ns  } (f) \bigr)\bigr]^2  \,d\rho_2\, d\mu  \\
&\leq 4 \int_{E} \left( \frac{ \hat S_{  nt }(f)   - \hat S_{
   ns  } (f)}{\lambda c_n} \right)^2\left(  \int\limits_0^{\lambda c_n /
| \hat S_{  nt }(f)   - \hat S_{   ns  } (f)|} 
\hspace{-20pt} y
  \rho_2(y,\infty)\, dy\right) d\mu \\ 
&\leq \frac{C}{ \lambda^{p_0}} \frac{1}{c_n^{p_0}} \int_{E} | \hat S_{
  nt }(f)   - \hat S_{   ns  } (f)|^{p_0}\, d\mu \,. 
\end{align*}
Similarly,
\begin{align*}
& \P\Bigl(\Big| \ \iint\limits_{|y(  \hat S_{  nt }(f)   - \hat S_{
   ns  } (f)) |  > \lambda c_n} 
\hspace{-20pt} y \bigl( \hat S_{  nt }(f)   - \hat S_{
   ns  } (f) \bigr)  \,dN_2 \Big| > 0\Bigr) \\
&\leq \P\Bigl (N_2 \bigl( \{ 
|y(  \hat S_{  nt }(f)   - \hat S_{
   ns  } (f)) |  > \lambda c_n\} \bigr)\geq 1\Bigr) \\
&\leq \E N_2  \bigl( \{ 
|y(  \hat S_{  nt }(f)   - \hat S_{
   ns  } (f)) |  > \lambda c_n\} \bigr)
\\
&= 2 \int_{E} \rho_2 \bigl (\lambda c_n 
| \hat S_{  nt }(f)   - \hat S_{   ns  } (f))|^{-1}
,\infty\bigr) \, d\mu \\
&\leq  \frac{C}{ \lambda^{p_0}} \frac{1}{c_n^{p_0}} \int_{E} 
 | \hat S_{  nt }(f)   - \hat S_{   ns  } (f))|^{p_0}\, d\mu \,.
\end{align*}
Elementary manipulations of the linear interpolation of the sums and
\eqref{e:miss.lemma} show that 
$$
 \P \left(     
\left|\sum_{k=1}^{ nt } X_k^{(2)}  - \sum_{k=1}^{  ns
   } X_k^{(2)} \right| > \lambda c_n \right) 
\leq \frac{C}{ \lambda^{p_0}} \frac{1}{c_n^{p_0}} \max_{n(t-s)-1\leq
  m\leq n(t-s)+1} 
\int_{E} |\hat S_m (f)|^{p_0}\, d\mu
$$
$$
\leq \frac{C}{\lambda^{p_0}} \frac{1}{c_n^{p_0}}\, \bigl( a_{
  n(t-s)}\bigr)^{p_0/2}\, \mu(\tau_{{D}} \leq   n(t-s))\,,
$$
where at the last step we have used the assumption $n(t-s)\geq 1$, 
regular variation, and Theorem \ref{cor:entrance time}.

Suppose first that $0<\beta<1$. 
Choose $\epsilon > 0$ so that $2/\alpha - \epsilon - 1 > 0$. Note
that, if \eqref{e:lower.tail} holds for some $p_0 \in (0,2)$, it also holds
for all larger $p_0$. Thus, 
we may assume that $p_0$ is close enough to $2$ so that
$$
(1-\beta) \left( \frac{p_0}{\alpha} - \frac{\epsilon p_0}{2} - 1
\right) > \beta \left( 1 - \frac{p_0}{2} \right). 
$$
Since we are  {assuming} that $\mu(\tau_D\leq 1)>0$, we see that
\begin{align*}
&   \frac{1}{c_n^{p_0}}\, \bigl( a_{
  n(t-s)}\bigr)^{p_0/2}\, \mu(\tau_{{D}} \leq   n(t-s))\\[5pt]
&\leq C\left( \frac{a_n \mu(\tau_{{D}} \leq
  n)^{2/\alpha-\epsilon}}{c_n^2} \right)^{p_0/2}\, 
\left( \frac{a_{ n(t-s)}}{a_n} \right)^{p_0/2} \left( \frac{\mu
  \bigl(\tau_{{D}} \leq  n(t-s) \bigr)}{\mu(\tau_{{D}} \leq n)} \right)^{p_0/\alpha - \epsilon p_0 / 2}  \\[5pt]
&\leq  C\left( \frac{a_{ n(t-s)}}{a_n} \right)^{p_0/2} \left( \frac{\mu
  \bigl(\tau_{{D}} \leq  n(t-s) \bigr)}{\mu(\tau_{{D}} \leq n)} \right)^{p_0/\alpha - \epsilon p_0 / 2}\,.
\end{align*}
The first inequality above uses the choice of $\epsilon$ and $p_0$,
while the second inequality follows from the definition of $c_n$ and
regular variation of $\rhoinv$. 
Next, we choose $0 < \xi < \min \{ \beta, 1-\beta \}$ such that
$$
(1-\beta) \left( \frac{p_0}{\alpha} - \frac{\epsilon p_0}{2} - 1 \right) - \beta \left( 1 - \frac{p_0}{2} \right) - \xi \left( \frac{p_0}{2} + \frac{p_0}{\alpha} - \frac{\epsilon p_0}{2} \right) > 0\,.
$$
By the regular variation of $a_n$ and $\mu(\tau_{{D}} \leq n)$,
$$
 \frac{\mu
  \bigl(\tau_{{D}} \leq  n(t-s) \bigr)}{\mu(\tau_{{D}} \leq n)} 
\leq C\, (t-s) ^{1-\beta-\xi}\,, \ \ \ \ \frac{a_{n(t-s)}}{a_n} \leq C\, (t-s)^{\beta-\xi}\,.
$$
Combining these inequalities together, we have
$$
 \P \left(     
\left|\sum_{k=1}^{ nt } X_k^{(2)}  - \sum_{k=1}^{  ns
   } X_k^{(2)} \right| > \lambda c_n \right) 
\leq \frac{C}{\lambda^{p_0}}\, (t-s)^{\gamma},
$$
where $\gamma = (\beta-\xi)\, p_0/2 + (1-\beta-\xi) ( p_0/\alpha - \epsilon p_0/2)$. 
Due to the constraints in $\epsilon$, $p_0$, and $\xi$, it is easy to check that $\gamma>1$. 
This establishes tightness for the normalized partial sums of
$X_k^{(2)}$ in the case $0<\beta<1$. 

If $\beta=1$, then the assumption $a_n=o(n)$ and a standard
modification of Theorem 12.3 of \cite{billingsley:1968} make the same
argument go through. 

It remains to prove tightness of the normalized partial sums
of  $X_k^{(1)}$ in the space $\mathcal{C}[0,L]$, for a fixed
$L>0$. For notational simplicity we take $L=1$. We
will consider first the case $0<\alpha<1$. Let 
$\rho_1^{\leftarrow}(y) := \inf \bigl\{ x \geq 0: \rho_1 (x,\infty) \leq y  \bigr\}$, $y>0$ be.the inverse of the tail of $\rho_1$. 
We will make use of a certain series representation; see \cite{rosinski:1990b}:
\begin{equation} \label{e:split.x1}
\left( \sum_{k=1}^{ nt}  X_k^{(1)}, \,  0\leq t\leq 1 \right) \eid
\left( \sum_{j=1}^{\infty} \epsilon_j \rho_1^{\leftarrow} \left( \frac{\Gamma_j}{2 \mu (\tau_D \leq n)}\right)\, \hat S_{nt} (f)(V_j^{(n)}), \, 0 \leq t \leq 1  \right) \,,
\end{equation}
where $(\epsilon_j)$ is an i.i.d. sequence of Rademacher variables (taking $+1, -1$ with probability $1/2$), $\Gamma_j$ is the $j$th jump time of a unit rate Poisson process,  and $(V_j^{(n)})$ is a sequence of i.i.d. random variables with common law $\mu_n$. Further, $(\epsilon_j)$, $(\Gamma_j)$, and $(V_j^{(n)})$ are taken to be independent with each other. 

Fix $\xi \in (0, 1/\alpha-1)$ and for $K > 2(1/\alpha + \xi) -1$, we split the right-hand side above as follows. 
\begin{align*}
T_{n,1}^{(K)}(t) &= \sum_{j=1}^{K} \epsilon_j \rho_1^{\leftarrow} \left( \frac{\Gamma_j}{2 \mu (\tau_D \leq n)}\right) \hat S_{nt} (f)(V_j^{(n)})\,, \\
T_{n,2}^{(K)}(t) &= \sum_{j=K+1}^{\infty} \epsilon_j \rho_1^{\leftarrow} \left( \frac{\Gamma_j}{2 \mu (\tau_D \leq n)}\right) \hat S_{nt} (f)(V_j^{(n)})\,.
\end{align*}
We will prove that the sequence $(c_n^{-1} T_{n,1}^{(K)})$ is, for every $K$, tight in $\CC[0,1]$, while 
\begin{equation}  \label{e:sec.term}
\lim_{K \to \infty} \limsup_{n \to \infty} \P \bigl( \sup_{0 \leq t \leq 1} |T_{n,2}^{(K)}(t)| > \epsilon c_n \bigr) = 0\,, \ \ \text{for every } \epsilon >0\,.
\end{equation}
Notice that
$$
c_n^{-1} T_{n,1}^{(K)}(t) = C_{\alpha}^{1/\alpha}  \sum_{j=1}^{K} \epsilon_j \rho_1^{\leftarrow} \left( \frac{\Gamma_j}{2 \mu (\tau_D \leq n)}\right) / \rho^{\leftarrow} \bigl( \mu(\tau_D \leq n)^{-1} \bigr) \frac{\hat S_{nt} (f)(V_j^{(n)})}{\sqrt{a_n}}\,.
$$
Since $\rho_1^{\leftarrow}$ and $\rho^{\leftarrow}$ are both regularly varying at zero with exponent $-1/\alpha$,  
$$
\rho_1^{\leftarrow} \left( \frac{\Gamma_j}{2 \mu (\tau_D \leq n)}\right) / \rho^{\leftarrow} \bigl( \mu(\tau_D \leq n)^{-1} \bigr) \to 2^{1/\alpha} \Gamma_j^{-1/\alpha}\,, \ \ \ n \to \infty\,, \ \ \text{a.s.}.
$$
On the other hand, by Theorem \ref{cor:entrance time}, each $a_n^{-1/2} \hat S_{nt}(f)(V_j^{(n)})$ weakly converges in $\CC[0,1]$, and thus, by independence, $c_n^{-1} T_{n,1}^{(K)}$ turns out to be tight in $\CC[0,1]$. 

Next, we will turn to proving \eqref{e:sec.term}. The probability in \eqref{e:sec.term} can be estimated from above by 
$$
\P \left( C_{\alpha}^{1/\alpha} \sum_{j=K+1}^{\infty} \rho_1^{\leftarrow} \left( \frac{\Gamma_j}{2 \mu (\tau_D \leq n)}\right) / \rho^{\leftarrow} \bigl( \mu(\tau_D \leq n)^{-1} \bigr) \sup_{0 \leq t \leq 1} \bigl| a_n^{-1/2} \hat S_{nt}(f)(V_j^{(n)}) \bigr| > \epsilon\, \right)
$$
Appealing to Potter's bounds and the fact that $\rho_1$ has no mass on $\{ x:|x|\leq 1 \}$, 
$$
\rho_1^{\leftarrow} \left( \frac{\Gamma_j}{2 \mu (\tau_D \leq n)}\right) / \rho^{\leftarrow} \bigl( \mu(\tau_D \leq n)^{-1} \bigr) \leq C \max \bigl\{ \Gamma_j^{-1/\alpha+\xi},  \Gamma_j^{-1/\alpha-\xi} \bigr\}\,.
$$
Combining this bound, Chebyshev's inequality, and the Cauchy-Schwarz inequality, the probability in \eqref{e:sec.term} is bounded from above by 
$$
C \epsilon^{-2} \E (B_n)^2 \E \left( \sum_{j=K+1}^{\infty} \max \bigl\{ \Gamma_j^{-1/\alpha+\xi},  \Gamma_j^{-1/\alpha-\xi} \bigr\}  \right)^2,
$$
where $B_n = \sup_{0 \leq t \leq 1} \bigl| a_n^{-1/2} \hat S_{nt}(f)(V_1^{(n)}) \bigr|$. We know from Remark \ref{rk:l2} that the sequence $(\E (B_n)^2)$ is uniformly bounded in $n$. Because of the restriction in $K$, 
$$
\E \left( \sum_{j=K+1}^{\infty} \Gamma_j^{-(1/\alpha \pm \xi)} \right)^2 \leq \left( \sum_{j=K+1}^{\infty} \bigl\{ \E \Gamma_j^{-2(1/\alpha \pm \xi)}  \bigr\}^{1/2} \right)^2 \leq C \left( \sum_{j=K+1}^{\infty} j^{-(1/\alpha \pm \xi)} \right)^2,
$$
where the rightmost term vanishes as $K \to \infty$, so the proof of the tightness has been completed.

This proves tightness in the case $0<\alpha<1$, and we proceed now to
show tightness of the normalized partial sums
of  $X_k^{(1)}$ in the space $\mathcal{C}[0,1]$,  {for} the case $1\leq \alpha<2$. 
Recall that, in this case, we impose a stronger integrability
assumption on $f$. We start with the L\'evy-It{\^o} decomposition
\eqref{e:split.x1} and write, for  $K>1$,  
 \begin{align*}
\left( \sum_{k=1}^{nt }X_k^{(1)}, \, 0\leq t\leq 1\right)
  & \stackrel{d}{=} \left( \sum_{k=1}^{ nt } \(X_k^{(1,K)}
   +X_k^{(2,K)} \), \, 0 \leq t \leq 1 \right) \\
& := \left( \ \iint\limits_{|y|\le Kc_n/\sqrt{a_n}} \hspace{-20pt} y\hat
  S_{ nt} (f) \,d{N}_1 + \iint\limits_{|y|> Kc_n/\sqrt{a_n}}
  \hspace{-20pt} y \hat S_{ nt} (f) \,dN_1, \, 0 \leq t \leq 1 \right).
\end{align*}
Note that the probability that the process $\bigl(\sum_{k=1}^{nt}X_k^{(2,K)}\bigr)$
does not  {identically} vanish on the interval $[0,1]$ does not exceed  
$$
 \P\bigl(N_1  \{ (\bx,y):\, |y|  >  Kc_n/\sqrt{a_n}, \, \tau_{{D}}(\bx)\le n\} \geq 1 \bigr)
 $$
$$
\leq \E N_1 \{ (\bx,y):\, |y|  >  Kc_n/\sqrt{a_n}, \, \tau_{{D}}(\bx)\le n\}
$$
$$
= 2   \rho_1 \left( K c_n/
    \sqrt{a_n}, \infty \right)  \mu(\tau_{{D}}\le n)
$$
$$
\sim 2C_\alpha\frac{\rho \left( K c_n/
    \sqrt{a_n}, \infty \right)}{\rho \left( c_n/
    \sqrt{a_n}, \infty \right)}\to 2C_\alpha K^{-\alpha}
$$
as $n\to\infty$, and this can be made arbitrarily small by choosing
$K$ large. Therefore, we only need to show  tightness, for every fixed
$K$, of the
normalized partial sums of the process $X_k^{(1,K)}$. As in
\eqref{billingsleythm}, it is enough to prove that there exist
$\gamma>1$, $\rho\geq 0$  and $C>0$ such that 
\be\label{eq:final tightness}
 \P \left(     
\left|\sum_{k=1}^{ nt } X_k^{(1,K)}  - \sum_{k=1}^{  ns
   } X_k^{(1,K)} \right| > \lambda c_n \right) \leq
\frac{C}{\lambda^{\rho}}(t-s)^{\gamma}  
\ee
for all $0   \leq s \leq t \leq 1$, $n \geq 1$ and $\lambda >
0$. In a manner, similar to the one we employed while proving
\eqref{billingsleythm}, we can dispose of the case $n(t-s) < 1$, so we
will look at the case $n(t-s) \geq 1$.

Let $0<\vep<1$ be such that $f\in L^{2+\vep}(\pi)$. By Proposition
\ref{p:symm.mom},  
\bea \label{e:last.split}
&&\frac{1}{c_n^{2+\vep}}\E
\left|\sum_{k=1}^{ nt } X_k^{(1,K)}  - \sum_{k=1}^{  ns
   } X_k^{(1,K)} \right|^{2+\vep} \\
\nn&\le&\frac{C}{ c_n^{2+\vep}} \int_{\R\times E}  \bigl| \hat S_{nt}(f)-\hat S_{ns}(f)\bigr|^{2+\vep}
 {|y|^{2+\vep}   1_{\{ |y|\le Kc_n/\sqrt{a_n}\}} } d\rho_1 d\mu   \\
\nn&+& \frac{C}{ c_n^{2+\vep}} \left( \int_{\R\times E}  \bigl| \hat S_{nt}(f)-\hat S_{ns}(f)\bigr|^{2}
 {|y|^{2}   1_{\{ |y|\le Kc_n/\sqrt{a_n}\}}}  d\rho_1
       d\mu\right)^{1+\vep/2}\,.  
\eea
By Karamata's theorem,
$$
\int_\bbr   {|y|^{2+\vep}   1_{\{ |y|\le Kc_n/\sqrt{a_n}\}}}  d\rho_1
\leq C\frac{\bigl( c_n/\sqrt{a_n}\bigr)^{2+\vep}}{\mu(\tau_{{D}} \leq n)}\,.
$$
Further, by the fact that $f$ is supported on ${D}$, and by the
integrability assumption on $f$, we know that
$$
\int_E \bigl| \hat S_{nt}(f)-\hat S_{ns}(f)\bigr|^{2+\vep}\, d\mu 
\leq C \mu(\tau_{{D}} \leq n(t-s))(a_{n(t-s)}) ^{(2+\vep)/2}\,;
$$
see Remark \ref{rk:higher.mom}. By Lemma \ref{l:RV.wander}, $
\mu(\tau_{{D}} \leq n) a_n^{(2+\vep)/2}$ is regularly varying with exponent
bigger than $1$, so the first term in the right hand side of
\eqref{e:last.split} is bounded from above by $C(t-s)^\gamma$, for
some $\gamma>1$. A similar argument produces the same bound for 
second  term in the right hand side of \eqref{e:last.split}. Now an
appeal to Markov's inequality proves \eqref{eq:final tightness}. 
\end{proof}

\section{Appendix: Fractional moments of \id\ random
  variable} \label{sec:moments} 

In this appendix we present explicit bounds on the fractional moments
of certain \id\ random variables in terms of moments of their L\'evy
measures. These estimates are needed for the proof of Theorem
\ref{t:main.fclt}. We have not been able to find such bounds in the
literature. Combinatorial identities for the integer moments have
been known at least since \cite{bassand:bona:1990}.  
Fractional moments have been investigated, using fractional calculus,
by \cite{matsui:pawlas:2014},  
but the latter paper does not give general explicit bounds of the type
we need.

We will consider fractional moments of nonnegative \id\ random
variables 
and of symmetric \id\ random variables in the ranges needed in the
present paper, but our approach can be extended to moments of all
orders. We start with nonnegative \id\ random variables with Laplace
transform of the form
\begin{equation} \label{e:pos.id}
 {\E} e^{-\theta X} = \exp\left\{ -\int_0^\infty \left( 1- e^{-\theta
      y}\right)\, \nu_+(dy)\right\} := e^{-I(\theta)}, \ \theta\geq 0\,,
\end{equation} 
with the L\'evy measure $\nu_+$ satisfying 
$$
\int_0^\infty y \, \nu_+(dy)<\infty\,.
$$
\begin{proposition} \label{p:pos.mom}
Let $1<p<2$. Then there is $c_p\in (0,\infty)$, depending only on $p$,
such that for any \id\ random variable $X$ satisfying
\eqref{e:pos.id}, 
\begin{equation} \label{e:pos.mom}
 {\E} X^p \leq c_p\left( \int_0^\infty y^p\, \nu_+(dy) + \left(
    \int_0^\infty y\, \nu_+(dy) \right)^p\right)\,.
\end{equation}
\end{proposition}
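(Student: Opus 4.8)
The plan is to combine an integral representation of $x\mapsto x^{p}$ with the explicit Laplace transform of $X$, reducing \eqref{e:pos.mom} to a one–variable deterministic estimate. For $1<p<2$ and $x\ge 0$ one has
$$
x^{p}=C_{p}\int_{0}^{\infty}\bigl(e^{-\theta x}-1+\theta x\bigr)\,\theta^{-p-1}\,d\theta,
\qquad
C_{p}:=\Bigl(\int_{0}^{\infty}(e^{-u}-1+u)\,u^{-p-1}\,du\Bigr)^{-1}\in(0,\infty),
$$
where the normalizing integral converges exactly because $1<p<2$ (the integrand is $O(u^{1-p})$ near $0$ and $O(u^{-p})$ near $\infty$), and the identity itself follows from the substitution $u=\theta x$. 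Writing $m:=\int_{0}^{\infty}y\,\nu_{+}(dy)$ and noting that $\E X=m<\infty$, I would apply this with $x=X$ and take expectations; since $e^{-\theta X}-1+\theta X\ge 0$, Tonelli's theorem gives
$$
\E X^{p}=C_{p}\int_{0}^{\infty}\bigl(e^{-I(\theta)}-1+\theta m\bigr)\,\theta^{-p-1}\,d\theta .
$$
If $m=0$ then $\nu_{+}\equiv 0$ and the claim is trivial, and if $\int_{0}^{\infty}y^{p}\,\nu_{+}(dy)=\infty$ the right-hand side of \eqref{e:pos.mom} is infinite, so I may assume both are finite.

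Next I would bound the integrand by splitting $e^{-I(\theta)}-1+\theta m=\bigl(e^{-I(\theta)}-1+I(\theta)\bigr)+\bigl(\theta m-I(\theta)\bigr)$. The elementary inequality $e^{-u}-1+u\le u^{2}/2$ for $u\ge 0$ yields $e^{-I(\theta)}-1+I(\theta)\le I(\theta)^{2}/2$, while $h(\theta):=\theta m-I(\theta)=\int_{0}^{\infty}(\theta y-1+e^{-\theta y})\,\nu_{+}(dy)\ge 0$. The term $h$ integrates exactly: by Tonelli and the same substitution as above,
$$
\int_{0}^{\infty}h(\theta)\,\theta^{-p-1}\,d\theta=C_{p}^{-1}\int_{0}^{\infty}y^{p}\,\nu_{+}(dy),
$$
so after multiplying by $C_{p}$ this contribution reproduces precisely the $\int y^{p}\,\nu_{+}$ term in \eqref{e:pos.mom}.

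The remaining difficulty is the second-order term $I(\theta)^{2}/2$, which I expect to be the main obstacle: integrating $I(\theta)^{2}$ over all of $(0,\infty)$ and expanding it as a double $\nu_{+}$-integral produces the kernel $\min(y,z)\max(y,z)^{p-1}$, whose $\nu_{+}\otimes\nu_{+}$-mass is not controllable by $m^{p}+\int y^{p}\,\nu_{+}$ in general (a surplus of small jumps can render $\int y^{p-1}\,\nu_{+}$ infinite). The device that resolves this is to split the $\theta$-integral at the natural scale $\theta_{0}=1/m$. On $(0,1/m)$ I would use only the crude pointwise bound $I(\theta)\le\theta m$ (from $1-e^{-\theta y}\le\theta y$), giving
$$
\tfrac{1}{2}\int_{0}^{1/m}I(\theta)^{2}\,\theta^{-p-1}\,d\theta\le\frac{m^{2}}{2}\int_{0}^{1/m}\theta^{1-p}\,d\theta=\frac{m^{p}}{2(2-p)} .
$$
On $(1/m,\infty)$ I would discard this decomposition entirely and instead use $e^{-I(\theta)}-1+\theta m\le\theta m$ (valid since $e^{-I}\le 1$), giving $\int_{1/m}^{\infty}\theta m\,\theta^{-p-1}\,d\theta=m^{p}/(p-1)$. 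Collecting the three contributions yields $\E X^{p}\le\int_{0}^{\infty}y^{p}\,\nu_{+}(dy)+c_{p}'\,m^{p}$ for an explicit $c_{p}'$ depending only on $p$, which is \eqref{e:pos.mom} with $c_{p}=\max\{1,c_{p}'\}$. Apart from selecting this splitting point, the remaining steps are routine.
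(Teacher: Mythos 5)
Your proof is correct and follows essentially the same route as the paper's: a Mellin-type integral representation of $x^p$, exact integration of the first-order term against $\theta^{-p-1}$ to reproduce $\int_0^\infty y^p\,\nu_+(dy)$, and a cutoff of the $\theta$-integral at scale $1/m$ to control the quadratic remainder $I(\theta)^2$ by $m^p$. The only difference is cosmetic: you use the kernel $e^{-\theta x}-1+\theta x$ where the paper uses $(1-e^{-xy})^2$ (with the cutoff placed at $\sup\{\theta:\,I(\theta)\le 1\}\ge 1/m$ rather than at $1/m$ itself), which makes your extraction of the first-order term slightly cleaner but changes nothing essential.
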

\begin{proof}
If the $p$th moment of the L\'evy measure,
$$
\int_0^\infty y^p \, \nu_+(dy)\,,
$$
is infinite, then so is the left hand side of \eqref{e:pos.mom}, and
the latter  {trivially} holds. Therefore, we will assume for the  {duration}
of the proof that the $p$th moment of the L\'evy measure is finite.
We reserve the notation $c_p$ for a generic finite positive constant
(that may depend only on $p$), and that may change from line to
line. We start with an elementary observation: there is $c_p$ such
that for any $x>0$, 
$$
x^p = c_p\int_0^\infty \left( 1-e^{-xy}\right)^2 y^{-(p+1)}\, dy\,.
$$
Therefore,
\begin{equation} \label{e:mom.intergral}
 {\E} X^p = c_p\int_0^\infty  {\E}\left( 1-e^{-yX}\right)^2 y^{-(p+1)}\, dy
= c_p\int_0^\infty \left( 1-2e^{-I(y)} + e^{-I(2y)}\right)
y^{-(p+1)}\, dy\,,
\end{equation}
where $I$ is defined in \eqref{e:pos.id}. Denote
$$
\theta^+=\sup\bigl\{ \theta\geq 0:\, I(\theta)\leq 1\bigr\}\in
(0,\infty]\,.
$$
Observe that 
\begin{equation} \label{e:cutoff}
\theta^+\geq \left( \int_0^\infty y\, \nu_+(dy) \right)^{-1}\,.
\end{equation}
To see that, notice that, if $\theta^+<\infty$, then
$$
1= I(\theta^+) \leq \theta^+\int_0^\infty y\, \nu_+(dy)\,.
$$

We now split the integral in \eqref{e:mom.intergral} and write
$$
 {\E} X^p = c_p\int_0^{\theta^+} \cdot + c_p\int_{\theta^+}^\infty \cdot \ :=
A+B\,.
$$
Note that by \eqref{e:cutoff},
$$
B\leq c_p \int_{\theta^+}^\infty y^{-(p+1)}\, dy \leq c_p \left(
    \int_0^\infty y\, \nu_+(dy) \right)^p\,.
$$
Next, using the inequality $1-e^{-2\theta}\leq 2(1-e^{-\theta})$ for
any $\theta\geq 0$, see that $I(\theta)\leq I(2\theta)\leq 2I(\theta)$
for each $\theta\geq 0$. Note also that for $0\leq b\leq 2a$ we have
$$
1- {2e^{-a} + e^{-b} \leq 2a^2} + (2a-b)\,,
$$
and we conclude that
$$
A \leq c_p\int_0^{\theta^+} \left( \int_0^\infty  \left( 1- e^{-x
      y}\right)\, \nu_+(dx)\right)^2\, y^{-(p+1)}\, dy
+ c_p\int_0^{\theta^+}  \left( \int_0^\infty  \left( 1- e^{-x
      y}\right)^2\, \nu_+(dx)\right)\, y^{-(p+1)}\, dy\,.
$$
Using the fact that for $0\leq y\leq \theta^+$ we have
$$
I(y)\leq \min\left( 1,y  \int_0^\infty x\, \nu_+(dx)\right)\,,
$$
 {it follows that}
$$
\int_0^{\theta^+} \left( \int_0^\infty  \left( 1- e^{-x
      y}\right)\, \nu_+(dx)\right)^2\, y^{-(p+1)}\, dy 
\leq \int_0^{\left( \int_0^\infty x\, \nu_+(dx) \right)^{-1}}
y^2 \left( \int_0^\infty x\, \nu_+(dx) \right)^{2}\, y^{-(p+1)}\, dy
$$
$$
+  \int_{\left( \int_0^\infty x\, \nu_+(dx) \right)^{-1}}^\infty \,
y^{-(p+1)}\, dy
 {\le} c_p \left(
    \int_0^\infty y\, \nu_+(dy) \right)^p\,.
$$
Finally,
$$
\int_0^{\theta^+}  \left( \int_0^\infty  \left( 1- e^{-x
      y}\right)^2\, \nu_+(dx)\right)\, y^{-(p+1)}\, dy
$$
$$
\leq \int_0^\infty \left( \int_0^\infty \left( 1- e^{-x
      y}\right)^2\, y^{-(p+1)}\, dy\right)\, \nu_+(dx)
= c_p \int_0^\infty y^p\, \nu_+(dy) \,,
$$
and the proof is complete. 
\end{proof}

We consider next a symmetric infinitely divisible random variable,
with characteristic function of the form
\begin{equation} \label{e:0mean.chf}
 {\E} e^{i\theta Y} = \exp\left\{ \int_{-\infty}^\infty \left( e^{i\theta
      y}-1-i\theta y\right)\, \nu(dy)\right\}, \ \theta\in\bbr\,,
\end{equation}
for some symmetric L\'evy measure $\nu$, satisfying 
$$
\int_{|y|\geq 1} y^2\, \nu(dy)<\infty\,.
$$
\begin{proposition} \label{p:symm.mom}
Let $2<p<4$. Then there is $c_p\in (0,\infty)$, depending only on $p$,
such that for any symmetric \id\ random variable $Y$ satisfying
\eqref{e:0mean.chf},
\begin{equation} \label{e:symm.mom2}
 {\E}|Y|^p \leq c_p\left( \int_{-\infty}^\infty |y|^p\, \nu(dy) + \left(
    \int_{-\infty}^\infty y^2\, \nu(dy) \right)^{p/2}\right)\,.
\end{equation}
\end{proposition}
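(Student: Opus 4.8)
The plan is to mirror the proof of Proposition~\ref{p:pos.mom}, replacing the representation of $x^p$ via $(1-e^{-xy})^2$ by a representation of $|x|^p$ via a squared cosine difference, which is the device that keeps all integrals convergent in the wider range $2<p<4$. The starting point is the elementary identity, valid for $0<p<4$,
\[
|x|^p = c_p \int_0^\infty \bigl(1-\cos(xu)\bigr)^2\, u^{-(p+1)}\, du, \qquad x\in\bbr,
\]
where $c_p\in(0,\infty)$ depends only on $p$ (after the substitution $v=|x|u$ the integral becomes the finite constant $\int_0^\infty (1-\cos v)^2 v^{-(p+1)}\,dv$; convergence at $0$ needs $p<4$ and at $\infty$ needs $p>0$). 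As in the positive case, if $\int |y|^p\,\nu(dy)=\infty$ the bound is trivial, so I may assume it is finite; note also that the L\'evy measure conditions together with the hypothesis $\int_{|y|\ge 1}y^2\,\nu(dy)<\infty$ force $\sigma^2:=\int y^2\,\nu(dy)<\infty$. Throughout, $c_p$ denotes a generic finite positive constant depending only on $p$, as in the proof of Proposition~\ref{p:pos.mom}.

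First I would integrate this representation against the law of $Y$ and use Tonelli to get $\E|Y|^p = c_p\int_0^\infty \E\bigl(1-\cos(uY)\bigr)^2\, u^{-(p+1)}\,du$. Writing $\phi(u)=\E\cos(uY)=e^{-\psi(u)}$ with $\psi(u)=\int(1-\cos(uy))\,\nu(dy)\ge 0$ (real by symmetry), the expansion $(1-\cos\theta)^2=\tfrac32-2\cos\theta+\tfrac12\cos 2\theta$ yields the key rewriting
\[
\E\bigl(1-\cos(uY)\bigr)^2 = 2\bigl(1-e^{-\psi(u)}\bigr)-\tfrac12\bigl(1-e^{-\psi(2u)}\bigr).
\]
The crucial cancellation is that the first-order part of the right-hand side, $2\psi(u)-\tfrac12\psi(2u)$, equals \emph{exactly} $\int(1-\cos(uy))^2\,\nu(dy)$, because of the pointwise identity $2(1-\cos\theta)-\tfrac12(1-\cos 2\theta)=(1-\cos\theta)^2$. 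Using $1-e^{-a}\le a$ and $1-e^{-a}\ge a-\tfrac12 a^2$, together with $\psi(2u)\le 4\psi(u)$ (from $1-\cos 2\theta\le 4(1-\cos\theta)$), I obtain the clean bound
\[
\E\bigl(1-\cos(uY)\bigr)^2 \le \int(1-\cos(uy))^2\,\nu(dy) + 4\,\psi(u)^2 .
\]

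Next, in analogy with the cutoff $\theta^+$ of the positive case, I would set $u^+=\sup\{u\ge 0:\psi(u)\le 1\}$ and split the $u$-integral at $u^+$. The bound $\psi(u)\le \tfrac{u^2}{2}\sigma^2$ (from $1-\cos\theta\le \theta^2/2$) gives $u^+\ge \sqrt2\,\sigma^{-1}$, the analog of \eqref{e:cutoff}. Over the tail $u>u^+$ the integrand $\E(1-\cos(uY))^2$ is at most $4$, so that part is bounded by $c_p (u^+)^{-p}\le c_p\,(\sigma^2)^{p/2}$. Over $[0,u^+]$ I substitute the displayed bound: the $\int(1-\cos(uy))^2\nu(dy)$ term, after extending the $u$-range to $(0,\infty)$ and applying Tonelli with the representation above, reproduces exactly $c_p\int|y|^p\,\nu(dy)$, while the remaining term $\int_0^{u^+}\psi(u)^2 u^{-(p+1)}\,du$ is controlled using $\psi(u)\le\min\bigl(1,\tfrac{u^2}{2}\sigma^2\bigr)$, splitting at $u_*=\sqrt2\,\sigma^{-1}\le u^+$: the range $(0,u_*]$ uses the quartic bound $\psi(u)^2\le\tfrac14 u^4\sigma^4$ (integrable since $p<4$) and the range $[u_*,u^+]$ uses $\psi\le 1$, both pieces evaluating to $c_p(\sigma^2)^{p/2}$. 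Collecting the three contributions yields \eqref{e:symm.mom2}; the cases $\sigma^2=0$ (where $Y\equiv 0$) and $u^+=\infty$ (where the tail part is vacuous) are handled trivially.

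The main obstacle is conceptual rather than computational: identifying the right representation and spotting the exact first-order cancellation. The squared difference $(1-\cos(xu))^2$, rather than $1-\cos(xu)$, is what pushes the admissible range up to $p<4$, and the identity $2(1-\cos\theta)-\tfrac12(1-\cos 2\theta)=(1-\cos\theta)^2$ is what converts the leading term of $\E(1-\cos(uY))^2$ back into the $|y|^p$-moment of $\nu$ after integration. Once these are in hand, the estimates on the two pieces are routine, exactly paralleling the bookkeeping for $A$ and $B$ in the proof of Proposition~\ref{p:pos.mom}.
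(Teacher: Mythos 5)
Your argument is correct in substance but follows a genuinely different route from the paper's. The paper never touches the characteristic function: it first treats the compound Poisson case, writing $Y\eid\sum_{j=1}^N W_j$ and invoking the Marcinkiewicz--Zygmund inequality to reduce $\E|Y|^p$ to $\E\bigl(\sum_{j=1}^N W_j^2\bigr)^{p/2}$; the inner sum is a \emph{nonnegative} \id\ random variable with L\'evy measure $\nu\{y:\,y^2\in\cdot\}$ and exponent $p/2\in(1,2)$, so Proposition \ref{p:pos.mom} applies verbatim, and the general case then follows by restricting $\nu$ to $\{|y|>1/m\}$ and using weak convergence plus Fatou. You instead redo the integral-representation argument of Proposition \ref{p:pos.mom} directly for $Y$, replacing $(1-e^{-xy})^2$ by $(1-\cos(xu))^2$ to reach the range $p<4$, and the exact cancellation $2(1-\cos\theta)-\tfrac12(1-\cos2\theta)=(1-\cos\theta)^2$ plays the structural role that the splitting of $A$ does there. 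I checked the identities and the key bound $\E\bigl(1-\cos(uY)\bigr)^2\le\int(1-\cos(uy))^2\,\nu(dy)+4\psi(u)^2$; they are correct. The paper's route concentrates all the analysis in Proposition \ref{p:pos.mom} and makes the symmetric case a short corollary; your route is self-contained and avoids the compound Poisson reduction, the approximation step, and the Marcinkiewicz--Zygmund inequality.

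One small repair is needed. Unlike the Laplace exponent $I$ of Proposition \ref{p:pos.mom}, the exponent $\psi(u)=\int(1-\cos(uy))\,\nu(dy)$ is not monotone in $u$, so $\{u:\,\psi(u)\le1\}$ need not be an interval, and the bound $\psi\le1$ on $[u_*,u^+]$ that you invoke for the middle piece is not justified (this is exactly the point where the analogy with \eqref{e:cutoff} breaks down). The fix is immediate and in fact simplifies your proof: drop $u^+$ altogether and split the $u$-integral at $u_*=\sqrt2\,\sigma^{-1}$ only. On $(u_*,\infty)$ bound $\E\bigl(1-\cos(uY)\bigr)^2\le4$, which integrates to $c_p(\sigma^2)^{p/2}$; on $(0,u_*]$ use your displayed bound together with $\psi(u)^2\le\tfrac14u^4\sigma^4$, whose integral against $u^{-(p+1)}$ over $(0,u_*]$ is again $c_p(\sigma^2)^{p/2}$ because $p<4$. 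With that change the proof of \eqref{e:symm.mom2} is complete.
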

\begin{proof}
Once again, we may and will assume that the moments of the L\'evy measure in
the right hand side of  {\eqref{e:symm.mom2}} are finite. We start with
the case when $\nu(\bbr)<\infty$. If $(W_j)$ is a sequence of
i.i.d. random variables with the common law $\nu/\nu(\bbr)$,
independent of a Poisson random variable $N$ with mean $\nu(\bbr)$,
then 
$$
Y \eid \sum_{j=1}^N W_j\,,
$$
and so by the Marcinkiewicz-Zygmund inequality (see e.g. (2.2), p. 227
in \cite{gut:2009}), 
$$
 {\E}|Y|^p\leq c_p  {\E}\left( \sum_{j=1}^N W_j^2\right)^{p/2}\,.
$$
The random variable
$$
X= \sum_{j=1}^N W_j^2
$$
is a nonnegative random variable with Laplace transform of the form
\eqref{e:pos.id}, with L\'evy measure $\nu_+$ given by 
$$
\nu_+(A) = \nu\{ y:\, y^2\in A\}, \ A \ \text{Borel.}
$$
Applying Proposition \ref{p:pos.mom} (with $p/2$), proves
 {\eqref{e:symm.mom2}} in the compound Poisson case $\nu(\bbr)<\infty$. In
the general case we use an approximation procedure. For $m=1,2,\ldots$
let $\nu_m$ be the restriction of the L\'evy measure $\nu$ to the set
$\{ y:\, |y|>1/m\}$. Then each $\nu_m$ is a finite symmetric
measure. If $Y_m$ is an \id\ random variable with the characteristic 
function given by \eqref{e:0mean.chf}, with $\nu_m$ replacing  {$\nu$, 
then} $Y_m\Rightarrow Y$ as $m\to\infty$, and the fact that
 {\eqref{e:symm.mom2}}  holds for $Y$ follows from the fact that it holds
for each $Y_m$ and Fatou's lemma. 
\end{proof}

\end{document}